\newtheorem{thm}{Theorem}[section]
\newtheorem{cor}[thm]{Corollary}
\newtheorem{prop}[thm]{Proposition}
\newtheorem{lem}[thm]{Lemma}
\theoremstyle{definition}
\newtheorem{defn}[thm]{Definition}
\newtheorem{exmp}[thm]{Example}
\newtheorem{fact}[thm]{Fact}
\newtheorem{obs}[thm]{Observation}
\newtheorem*{theorem*}{Theorem}
\theoremstyle{remark}
\newtheorem{rem}[thm]{Remark}
\newcommand{\tp}{tp}
\let\c@equation\c@thm
\numberwithin{equation}{section}
\def\Ind{\setbox0=\hbox{$x$}\kern\wd0\hbox to 0pt{\hss$\mid$\hss} \lower.9\ht0\hbox to 0pt{\hss$\smile$\hss}\kern\wd0} 
\def\Notind{\setbox0=\hbox{$x$}\kern\wd0\hbox to 0pt{\mathchardef \nn=12854\hss$\nn$\kern1.4\wd0\hss}\hbox to 0pt{\hss$\mid$\hss}\lower.9\ht0 \hbox to 0pt{\hss$\smile$\hss}\kern\wd0}
\title{Invariants related to the tree property}
\author{Nicholas Ramsey}
\date{\today}
\begin{document}

\begin{abstract}
We consider cardinal invariants related to Shelah's model-theoretic tree properties and the relations that obtain between them.  From strong colorings, we construct theories $T$ with $\kappa_{\text{cdt}}(T) > \kappa_{\text{sct}}(T) + \kappa_{\text{inp}}(T)$.  We show that these invariants have distinct structural consequences, by investigating their effect on the decay of saturation in ultrapowers.  This answers some questions of Shelah.  
\end{abstract}

\maketitle
\setcounter{tocdepth}{1}
\tableofcontents

\section{Introduction}
One of the fundamental discoveries in stability theory is that stability is local:  a theory is stable if and only if no formula has the order property.  Among the stable theories, one can obtain a measure of complexity by associating to each theory $T$ its \emph{stability spectrum}, namely, the class of cardinals $\lambda$ such that $T$ is stable in $\lambda$.  A classification of stability spectra was given by Shelah in \cite[Chapter 3]{shelah1990classification}.  Part of this analysis amounts showing that stable theories do not have the tree property and, consequently, that forking satisfies local character.  But a crucial component of that work was studying the approximations to the tree property which can exist in stable theories and what structural consequences they have.  These approximations were measured by a cardinal invariant of the theory called $\kappa(T)$, and Shelah's stability spectrum theorem gives an explicit description of the cardinals in which a given theory $T$ is stable in terms of the cardinality of the set of types in finitely many variables over the empty set and $\kappa(T)$.  Shelah used the definition of $\kappa(T)$ as a template for quantifying the global approximations to other tree properties in introducing the invariants $\kappa_{\text{cdt}}(T)$, $\kappa_{\text{sct}}(T)$, and $\kappa_{\text{inp}}(T)$ (see Definition \ref{patterns} below) which bound approximations to the tree property (TP), the tree property of the first kind (TP$_{1}$), and the tree property of the second kind (TP$_{2}$), respectively.  Eventually, the local condition that a theory does not have the tree property (\emph{simplicity}), and the global condition that $\kappa(T) = \kappa_{cdt}(T) = \aleph_{0}$ (\emph{supersimplicity}) proved to mark substantial dividing lines.  These invariants provide a coarse measure of the complexity of the theory, providing a ``quantitative" description of the patterns that can arise among forking formulas.  They are likely to continue to play a role in the development of a structure theory for tame classes of non-simple theories.  

Motivated by some questions from \cite{shelah1990classification}, we explore which relationships known to hold between the \emph{local} properties TP, TP$_{1}$, and TP$_{2}$ also hold for the \emph{global} invariants $\kappa_{\text{cdt}}(T)$, $\kappa_{\text{sct}}(T)$, and $\kappa_{\text{inp}}(T)$.  In short, we are pursuing the following analogy:
\begin{table}[h!]
  \begin{center}
    \label{tab:table1}
    \begin{tabular}{c|c|c|c}
   local & TP & TP$_{1}$ & TP$_{2}$ \\
      \hline
      global & $\kappa_{\text{cdt}}$ & $\kappa_{\text{sct}}$ & $\kappa_{\text{inp}}$ \\
    \end{tabular}
  \end{center}
\end{table}

\noindent This continues the work done in \cite{ArtemNick}, where, with Artem Chernikov, we considered a global analogue of the following theorem of Shelah:
\begin{theorem*}{\cite[III.7.11]{shelah1990classification}}
For complete theory $T$, $\kappa_{\text{cdt}}(T) = \infty$ and only if $\kappa_{\text{sct}}(T) = \infty$ or $\kappa_{\text{inp}}(T) = \infty$.  That is, $T$ has the tree property if and only if it has the tree property of the first kind or the tree property of the second kind.
\end{theorem*}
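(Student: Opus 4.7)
The two easy implications are nearly trivial. If TP$_{1}$ is witnessed by $(\varphi, (a_\eta)_{\eta \in \omega^{<\omega}})$ with branches consistent and incomparable pairs $2$-inconsistent, then siblings in particular are incomparable and hence $2$-inconsistent, so the same data witness TP with $k=2$. If TP$_{2}$ is witnessed by an array $(a_{i,j})_{i,j<\omega}$ with rows $k$-inconsistent and path-selections consistent, define a tree by $b_\eta := a_{|\eta|-1,\,\eta(|\eta|-1)}$ for nonempty $\eta$: siblings lie in a common row, yielding $k$-inconsistency, while each branch through the tree determines a path function through the array, yielding consistency.

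For the hard direction, assume TP, witnessed by $\varphi(x,y)$, $k<\omega$, and a tree $(a_\eta)_{\eta \in \omega^{<\omega}}$ whose branches are consistent and whose siblings are $k$-inconsistent. The plan is to pass, via a tree-Ramsey (modelling) argument, to a strongly tree-indiscernible subtree $(b_\eta)_{\eta \in \omega^{<\omega}}$ preserving the TP witness. By indiscernibility, for any two incomparable nodes $\eta,\nu$ the $\varphi$-type of $(b_\eta, b_\nu)$ depends only on the quantifier-free tree-type of $(\eta,\nu)$; in particular, consistency of $\{\varphi(x,b_\eta),\varphi(x,b_\nu)\}$ is determined by the isomorphism class of the configuration. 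A further Ramsey reduction on children at each splitting node also lets us upgrade sibling $k$-inconsistency to sibling $2$-inconsistency.

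This sets up the central dichotomy. If every pair of incomparable nodes yields $2$-inconsistent $\varphi$-instances, then $(b_\eta)$ directly witnesses TP$_{1}$. Otherwise, there is some ``good'' incomparable configuration, say two nodes meeting at the root, whose instances are consistent. In this case, the aim is to construct a TP$_{2}$ array $c_{i,j} := b_{\eta_{i,j}}$ by choosing the nodes $\eta_{i,j}$ so that (a) for each $i$ the set $\{\eta_{i,j}:j<\omega\}$ consists of siblings, guaranteeing $k$-inconsistency of row $i$, while (b) for every $f:\omega\to\omega$ the collection $\{\eta_{i,f(i)} : i<\omega\}$ consists of pairwise incomparable nodes all sitting in the ``good'' configuration, so that by indiscernibility every finite subtype is consistent and compactness yields full path consistency.

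The main obstacle lies in this last construction: realising (a) and (b) simultaneously inside a single indiscernible tree is not automatic, since horizontal sibling-placement and vertical cross-level configurations compete for the same branching resources. The strategy will be to choose the $\eta_{i,j}$ at a rapidly increasing sequence of depths, with each row a ``fan'' of siblings splayed off a distinguished branch, and to verify that any path selection lies in the appropriate configuration whose pair-consistency has been extracted. If the good configuration identified in the dichotomy sits higher up the tree rather than at the root, one may need to iterate the tree-indiscernible extraction, or first pass to an intermediate inp-pattern obtained by compactness, before the array can be laid out. Once (a) and (b) are secured, the TP$_{2}$ conclusion follows immediately.
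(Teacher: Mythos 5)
The paper does not prove this theorem; it is imported verbatim from Shelah's \emph{Classification Theory} (III.7.11) as a black box, so there is no in-paper argument to compare against. Your two easy implications are correct as written.

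The hard direction, however, has a genuine gap, and it sits exactly at the step you flag as the ``main obstacle.'' Your dichotomy is stated over \emph{pairs} of incomparable nodes: either every incomparable pair is inconsistent (TP$_1$), or some pair-configuration is consistent. But TP$_2$ requires path-selections to be \emph{fully} consistent, and pair-consistency of a configuration does not yield $n$-tuple consistency of antichains lying in that configuration, even with strong tree-indiscernibility. Indiscernibility only guarantees that all finite antichains of a given quantifier-free tree-type behave alike; it does not promote a consistent pair-type to a consistent triple-type. So the assertion that ``by indiscernibility every finite subtype is consistent and compactness yields full path consistency'' is not justified: between ``pairs of the good type are consistent'' and ``antichains of the good type are consistent,'' there is no a priori implication. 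A further problem is that your rows are fans of siblings, so for $i<i'$ the rows live at different levels; the pairwise tree-type of a path-selection then varies with $(i,i')$ unless you arrange the $\mu_i$'s at pairwise incomparable positions of the \emph{same} type, and once you do that, the row-inconsistency and the path-consistency requirements must be verified against a single configuration that is not the sibling configuration supplying the $k$-inconsistency. The existing arguments (Shelah's original, and later treatments going through intermediate notions like weak $k$-TP$_1$) resolve this by iterating and by changing the witnessing formula, typically to conjunctions $\bigwedge_{i<m}\varphi(x;y_i)$, while tracking the full antichain type being propagated rather than just a pair-type. Your sketch identifies the right moving parts but does not carry out the construction, and the compactness appeal as written is incorrect.
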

\noindent Shelah then asked if $\kappa_{\text{cdt}}(T) = \kappa_{\text{sct}}(T) + \kappa_{\text{inp}}(T)$ in general \cite[Question III.7.14]{shelah1990classification}\footnote{This formulation is somewhat inaccurate.  Shelah defines for $x \in \{\text{cdt},\text{inp},\text{sct}\}$, the cardinal invariant $\kappa r_{x}$, which is the least regular cardinal $\geq \kappa_{x}$.  Shelah's precise question was about the possible equality $\kappa r_{\text{cdt}} = \kappa r_{\text{sct}} + \kappa r_{\text{inp}}$.  For our purposes, we will only need to consider theories in which $\kappa_{x}$ is a successor cardinal, so we will not need to distinguish between these two variations.}.  In \cite{ArtemNick}, we showed that is true under the assumption that $T$ is countable.  For a countable theory $T$, the only possible values of these invariants are $\aleph_{0} ,\aleph_{1}$, and $\infty$\textemdash our proof handled each cardinal separately using a different argument in each case.  Here we consider this question without any hypothesis on the cardinality of $T$, answering the general question negatively (Theorem \ref{first main theorem} below):
\begin{theorem*}
There is a stable theory \(T\) so that \(\kappa_{\text{cdt}}(T) > \kappa_{\text{sct}}(T) + \kappa_{\text{inp}}(T)\).  Moreover, it is consistent with ZFC that for every regular uncountable \(\kappa\), there is a stable theory \(T\) with \(|T| = \kappa\) and \(\kappa_{\text{cdt}}(T) > \kappa_{\text{sct}}(T) + \kappa_{\text{inp}}(T)\).  
\end{theorem*}
  To construct a theory $T$ so that $\kappa_{\text{cdt}}(T) \neq \kappa_{\text{sct}}(T) + \kappa_{\text{inp}}(T)$, we use results on \emph{strong colorings} constructed by Galvin under GCH and later by Shelah in ZFC.  These results show that, at suitable regular cardinals, Ramsey's theorem fails in a particularly dramatic way.  The statement $\kappa_{\text{cdt}}(T) = \kappa_{\text{sct}}(T) + \kappa_{\text{inp}}(T)$ amounts to saying that a certain large global configuration gives rise to another large configuration which is moreover very uniform.  This has the feel of many statements in the partition calculus and we show that, in fact, a coloring $f: [\kappa]^{2} \to 2$ can be used to construct a theory $T^{*}_{\kappa, f}$ such that the existence of a large inp- or sct-patterns relative to $T^{*}_{\kappa,f}$ implies some homogeneity for the coloring $f$.  The  theories built from the strong colorings of Galvin and Shelah, then, furnish ZFC counter-examples to Shelah's question, and also give a consistency result showing that, consistently, for every regular uncountable cardinal $\kappa$, there is a theory $T$ with $|T| = \kappa$ and $\kappa_{\text{cdt}}(T) \neq \kappa_{\text{sct}}(T) + \kappa_{\text{inp}}(T)$.  This suggests that the aforementioned result of \cite{ArtemNick} for countable theories is in some sense the optimal result possible in ZFC.  

Our second theorem is motivated by the following theorem of Shelah:
\begin{theorem*}{\cite[VI.4.7]{shelah1990classification}}\label{saturation}
If $T$ is not simple, $\mathcal{D}$ is a regular ultrafilter over $I$, $M$ is an $|I|^{++}$-saturated model of $T$, then $M^{I}/\mathcal{D}$ is not $|I|^{++}$-compact.  
\end{theorem*}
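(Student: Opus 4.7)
The plan is to refute $|I|^{++}$-compactness of $N := M^I/\mathcal{D}$ by exhibiting a partial type of size $|I|^+$ in $N$ which is finitely satisfiable but omitted. Since $T$ is not simple, fix a formula $\varphi(x;y)$ and an integer $k$ witnessing the tree property: in the monster model there is a tree of parameters in which each branch yields a consistent set of instances of $\varphi$, while every family of $k$ siblings is inconsistent. Because $M$ is $|I|^{++}$-saturated, a standard compactness-plus-saturation argument produces such a tree $(a_\eta)_{\eta \in \kappa^{<\omega}}$ already inside $M$ with $\kappa := |I|^+$: the branching at every node is $\kappa$-ary, every $k$-element subset of siblings is inconsistent, and branches still give consistent $\varphi$-types.

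Next, by regularity of $\mathcal{D}$, fix $\{X_t : t \in I\} \subseteq \mathcal{D}$ such that $S_i := \{t \in I : i \in X_t\}$ is finite for every $i \in I$. The technical construction is to define, for each $\alpha < \kappa$, a function $f_\alpha : I \to M$ taking values among the tree nodes $a_\eta$, arranged so that (i) for any finite $G \subseteq \kappa$, at $\mathcal{D}$-almost every $i$ the set $\{f_\alpha(i) : \alpha \in G\}$ lies along a single root-to-leaf path of the tree; and (ii) for $\mathcal{D}$-almost every $i$, among $\{f_\alpha(i) : \alpha < \kappa\}$ there occur $k$ mutually inconsistent siblings. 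Setting $b_\alpha := [f_\alpha]_{\mathcal{D}}$ and $p(x) := \{\varphi(x; b_\alpha) : \alpha < \kappa\}$, property (i) together with {\L}o\'s's theorem yields finite satisfiability of $p$ in $N$, whereas (ii), combined with $k$-inconsistency of siblings, is designed to rule out a realization.

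The combinatorial core --- and the main obstacle --- is to produce an indexing achieving (i) and (ii) simultaneously, and to promote the coordinate-by-coordinate obstruction in (ii) into a uniform one. Any candidate realizer $[g]_{\mathcal{D}}$ of $p$ would give sets $A_\alpha := \{i \in I : M \models \varphi(g(i), f_\alpha(i))\} \in \mathcal{D}$ for every $\alpha < \kappa$, while (ii) only exhibits, coordinate by coordinate, some sibling not satisfied by $g(i)$, hence an index $\alpha(i)$ depending on $i$; one needs a single $\alpha$ for which $A_\alpha \notin \mathcal{D}$. The way to arrange this is to let $f_\alpha(i)$ depend on $\alpha$ only through the finitely many indices in $S_i$, using the $\kappa$-ary branching of the tree to carry all $\kappa$ values of $\alpha$ through the same small active block per coordinate, and to choose depths so that longer paths are traversed as the jointly active blocks grow. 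A finite pigeonhole inside each coordinate's active block, combined with the uniform bound $k$ on the number of sibling conflicts that $g(i)$ can resolve, then locates a fixed $\alpha$ for which $A_\alpha$ fails to be in $\mathcal{D}$. Executing this coding in ZFC and verifying that the inconsistencies really do lift through the ultrafilter is the delicate part of the argument.
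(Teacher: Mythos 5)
The key step you flag as ``delicate''---promoting the coordinate-by-coordinate failure into a single index $\alpha$ with $A_\alpha \notin \mathcal{D}$---is precisely where the argument breaks down, and a finite pigeonhole is not going to do it. After fixing a putative realizer $g$, for each coordinate $i$ you obtain only a finite set $B_i \subseteq \kappa$ of indices whose instances fail at $i$. These $B_i$ are finite subsets of $\kappa = |I|^+$, so $\bigcup_i B_i$ has size $\leq |I| < \kappa$, and there is no pigeonhole reason for any single $\alpha$ to be omitted on a $\mathcal{D}$-large set of coordinates: the failing index can wander freely over the $|I|$-many active blocks, and $k$-inconsistency of siblings only bounds how many instances $g(i)$ can hit per coordinate, not which one it misses. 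Some nontrivial set-theoretic mechanism is required to concentrate the failures. In the paper's version of this result (stated for $\kappa_{\mathrm{sct}}(T) > |I|^+$, Section \ref{compactness}), that mechanism is Fodor's lemma applied to a regressive choice function $\alpha \mapsto t_\alpha$ on a stationary set, feeding into a square-like array $\langle u_{t,\alpha}\rangle$ (Fact \ref{square}) whose levels are provably too small to accommodate the resulting branch; this is why the paper also needs the hypothesis $\lambda = \lambda^{<\lambda}$. Your sketch has no analogue of either ingredient.

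There is a second structural problem. You try to run the argument directly from the tree property with a single formula $\varphi$ and a single $k$-inconsistency bound. But both Shelah's original proof of VI.4.7 and the strategy this paper pursues explicitly split into the $\mathrm{TP}_1$ and $\mathrm{TP}_2$ cases (or, globally, the sct- and inp-cases), because the two kinds of inconsistency interact very differently with the ultrafilter: path-inconsistency between \emph{incomparable} nodes (the sct case) is what makes a Fodor-type argument close, while row-wise $k$-inconsistency alone does not. In fact Section \ref{compactness} of this paper shows the inp-analogue \emph{fails}: there is a theory $T^\dagger_\kappa$ with $\kappa_{\mathrm{inp}}(T^\dagger_\kappa) = \kappa^+ > |I|^+$ whose regular ultrapowers remain $|I|^{++}$-saturated. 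This is strong evidence that an argument resting on ``$k$ siblings are inconsistent'' without exploiting path-incompatibility cannot succeed; the row-wise inconsistency you are leaning on is exactly the kind that the ultrafilter can absorb. To repair the proof you would need to first reduce to one of $\mathrm{TP}_1$/$\mathrm{TP}_2$ and then, in the $\mathrm{TP}_1$ case, use the square array plus Fodor as in Fact \ref{square} and the theorem following it, and in the $\mathrm{TP}_2$ case, follow Shelah's separate argument from \cite{shelah1990classification}.
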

\noindent In an exercise, Shelah claims that the hypothesis that $T$ is not simple in the above theorem may be replaced by the condition $\kappa_{\text{inp}}(T) > |I|^{+}$ and asks if $\kappa_{\text{cdt}}(T) > |I|^{+}$ suffices \cite[Question VI.4.20]{shelah1990classification}.  We prove, in Corollary \ref{second main theorem} and Theorem \ref{second main theorem part 2} respectively, the following:
\begin{theorem*}
There is a theory $T$ such that $\kappa_{\text{inp}}(T) = \lambda^{++}$ yet for any regular ultrafilter $\mathcal{D}$ on $\lambda$ and $\lambda^{++}$-saturated model $M \models T$, $M^{\lambda}/\mathcal{D}$ is $\lambda^{++}$-saturated.  
\end{theorem*}

\begin{theorem*}
If $\lambda = \lambda^{<\lambda}$ and $\kappa_{\text{sct}}(T) > \lambda^{+}$, $M$ is an $\lambda^{++}$-saturated model of $T$ and $\mathcal{D}$ is a regular ultrafilter over $\lambda$, then $M^{\lambda}/\mathcal{D}$ is not $\lambda^{++}$-compact. 
\end{theorem*}
\noindent The first of these results contradicts Shelah's Exercise VI.4.19 and \emph{a fortiori} answers Question VI.4.20 negatively.  Although $\kappa_{\text{inp}}(T) > |I|^{+}$ and hence $\kappa_{\text{cdt}}(T) > |I|^{+}$ do not suffice to guarantee a loss of saturation in the ultrapower, one can ask if $\kappa_{\text{sct}}(T) > |I|^{+}$ does suffice.  Shelah's original argument for Theorem \ref{saturation} does not generalize, but fortunately a recent new proof due to Malliaris and Shelah \cite{Malliaris:2012aa} does and we point out in the second of these two theorems how the revised question can be answered, modulo a mild set-theoretic hypothesis, by an easy and direct adaptation of their argument. These results suggest that the rough-scale asymptotic structure revealed by studying the $\lambda^{++}$-compactness of ultrapowers on $\lambda$ is global in nature and differs from the picture suggested by the local case considered by Shelah.  

In order to construct these examples, it is necessary to build a theory capable of coding a complicated strong coloring yet simple enough that the invariants are still computable.  This was accomplished by a method inspired by Medvedev's $\mathbb{Q}$ACFA construction \cite{Medvedev:2015aa}, realizing the theory as a union of theories in a system of finite reducts each of which is the theory of a Fra\"iss\'e limit.  The theories in the finite reducts are $\aleph_{0}$-categorical and eliminate quantifiers and one may apply the $\Delta$-system lemma to the finite reducts arising in global configurations.  Altogether, this makes computing the invariants tractable.  

\textbf{Acknowledgements:}  This is work done as part of our dissertation under the supervision of Thomas Scanlon.  We would additionally like to acknowledge very helpful input from Artem Chernikov, Leo Harrington, Alex Kruckman, and Maryanthe Malliaris, as well as Assaf Rinot, from whom we first learned of Galvin's work on strong colorings.  Finally we would like to thank the anonymous referee for more than one especially thorough reading which did a great deal to improve this paper.

\section{Preliminaries}

\subsection{Notions from Classification Theory}

For the most part, we follow standard model-theoretic notation.  We may write $x$ or $a$ to denote a tuple of variables or elements, which may not have length 1.  If $x$ is a tuple of variables we write $l(x)$ to denote its length and for each $l < l(x)$, we write $(x)_{l}$ to denote the $l$th coordinate of $x$.  If $\varphi(x)$ is a formula and $t \in \{0,1\}$, we write $\varphi(x)^{t}$ to denote $\varphi(x)$ if $t = 1$ and $\neg \varphi(x)$ if $t = 0$.  

In the following definitions, we will refer to collections of tuples indexed by arrays and trees.  For cardinals $\kappa$ and $\lambda$, we use the notation $\unlhd$, $<_{lex}$, $\wedge$, and $\perp$ to refer to the tree partial order, the lexicographic order, the binary meet function, and the relation of incomparability on $\kappa^{<\lambda}$, respectively.  Given an element $\eta \in \kappa^{<\lambda}$, we write $l(\eta)$ to denote the length of $\eta$\textemdash that is, the unique $\alpha < \lambda$ such that $\eta \in \kappa^{\alpha}$\textemdash and if $l(\eta)\geq \beta$, we write $\eta | \beta$ for the unique $\nu \unlhd \eta$ with $l(\nu) = \beta$.  

\begin{defn} \label{patterns}  
\cite[Definitions III.7.2, III.7.3, III.7.5]{shelah1990classification}
\begin{enumerate}
\item A \emph{cdt-pattern of height} \(\kappa\) is a sequence of formulas \(\varphi_{i}(x;y_{i})\) (\(i < \kappa, i \text{ successor}\)) and numbers \(n_{i} < \omega\), and a tree of tuples \((a_{\eta})_{\eta \in \omega^{<\kappa}}\) for which 
\begin{enumerate}
\item \(p_{\eta} = \{\varphi_{i}(x;a_{\eta | i}) : i \text{ successor }, i < \kappa\}\) is consistent for \(\eta \in \omega^{\kappa}\).
\item \(\{\varphi_{i} (x;a_{\eta \frown \langle \alpha \rangle}) : \alpha < \omega , i = l(\eta) + 1\}\) is \(n_{i}\)-inconsistent.
\end{enumerate}
\item An \emph{inp-pattern of height} \(\kappa\) is a sequence of formulas \(\varphi_{i}(x;y_{i})\) \((i < \kappa)\), sequences \((a_{i,\alpha}: \alpha < \omega)\), and numbers \(n_{i} <\omega\) such that 
\begin{enumerate}
\item For any \(\eta \in \omega^{\kappa}\), \(\{ \varphi_{i}(x;a_{i,\eta(i)}) : i < \kappa\}\) is consistent.
\item For any \(i < \kappa\), \(\{\varphi_{i}(x;a_{i,\alpha}) : \alpha < \omega\}\) is \(n_{i}\)-inconsistent.  
\end{enumerate}
\item An \emph{sct-pattern of height} \(\kappa\) is a sequence of formulas \(\varphi_{i}(x;y_{i})\) \((i < \kappa)\) and a tree of tuples \((a_{\eta})_{\eta \in \omega^{<\kappa}}\) such that 
\begin{enumerate}
\item For every \(\eta \in \omega^{\kappa}\), \(\{\varphi_{\alpha}(x;a_{\eta | \alpha}) : 0 < \alpha < \kappa, \alpha \text{ successor}\}\) is consistent.
\item If \(\eta \in \omega^{\alpha}\), \(\nu \in \omega^{\beta}\), \(\alpha, \beta\) are successors, and \(\nu \perp \eta\) then \(\{\varphi_{\alpha}(x;a_{\eta}), \varphi_{\beta}(x;a_{\nu})\}\) are inconsistent.  
\end{enumerate}
\item For \(X \in \{\text{cdt}, \text{sct}, \text{inp}\}\), we define \(\kappa_{X}^{n}(T)\) be the first cardinal \(\kappa\) such that there is no \(X\)-pattern of height \(\kappa\) in \(n\) free variables.  We define \(\kappa_{X}(T) = \sup_{n} \{\kappa_{X}^{n}\}\).  
\end{enumerate}
\end{defn}

When introducing these definitions, Shelah notes that cdt stands for ``contradictory types" and inp stands for ``independent partitions."  He does not explain the meaning of sct, but presumably it is intended to abbreviate something like ``strongly contradictory types".

\begin{fact} \label{easy inequalities} \cite[Observation 3.1]{ArtemNick} Suppose $T$ is a complete theory in the language $L$.
\begin{enumerate}
\item If $T$ is stable, then $\kappa_{\mathrm{cdt}}(T) \leq |L|^{+}$.
\item $\kappa_{\mathrm{sct}}(T) \leq \kappa_{\mathrm{cdt}}(T)$ and $\kappa_{\mathrm{inp}}(T) \leq \kappa_{\mathrm{cdt}}(T)$.  	
\end{enumerate}	
\end{fact}

\begin{exmp}
Fix a regular uncountable cardinal \(\kappa\) and let \(L = \langle E_{\alpha} : \alpha < \kappa \rangle\) be a language consisting of $\kappa$ many binary relations.  Let $T_{\text{sct}}$ be the model companion of the $L$-theory asserting that each $E_{\alpha}$ is an equivalence relation and $\alpha < \beta$ implies $E_{\beta}$ refines $E_{\alpha}$.  Let $T_{\text{inp}}$ be the model companion of the $L$-theory which only asserts that each $E_{\alpha}$ is an equivalence relation.  In other words, $T_{\text{sct}}$ is the generic theory of $\kappa$ refining equivalence relations and $T_{\text{inp}}$ is the generic theory of $\kappa$ independent equivalence relations.  Now $\kappa_{\text{cdt}}(T_{\text{sct}}) = \kappa_{\text{cdt}}(T_{\text{sct}}) = \kappa^{+}\), and further \(\kappa_{\text{sct}}(T_{\text{sct}}) = \kappa_{\text{inp}}(T_{\text{inp}}) = \kappa^{+}\).  However, we have \(\kappa_{\text{inp}}(T_{\text{sct}}) = \aleph_{0}\) and \(\kappa_{\text{sct}}(T_{\text{inp}}) = \aleph_{1}\).  

Computing each of the invariants is straightforward using quantifier elimination for $T_{\text{inp}}$ and $T_{\text{sct}}$ with the exception of $\kappa_{\text{sct}}(T_{\text{inp}}) = \aleph_{1}$.  The fact that $\kappa_{\text{cdt}}(T_{\text{inp}}) \geq \aleph_{1}$ implies that $\kappa_{\text{sct}}(T_{\text{inp}}) \geq \aleph_{1}$ by \cite[Proposition 3.14]{ArtemNick}.  If $\kappa_{\text{sct}}(T_{\text{inp}}) > \aleph_{1}$ then there is an sct-pattern $(\varphi_{\alpha}(x;y_{\alpha}) : \alpha < \omega_{1})$, $(a_{\eta})_{\eta \in \omega^{<\omega_{1}}}$. Let $w_{\alpha}$ be the finite set of indices $\beta$ such that the symbol $E_{\beta}$ appears in $\varphi_{\alpha}(x;y_{\alpha})$.  After passing to an sct-pattern of the same size, we may assume that the $w_{\alpha}$ form a $\Delta$-system (see Fact \ref{delta-system lemma} below), using that $\kappa$ is regular and uncountable.  Now it is easy to check using quantifier elimination for $T_{\text{sct}}$ that there are incomparable $\eta \in \omega^{\alpha}, \nu \in \omega^{\beta}$ for some $\alpha,\beta < \omega_{1}$ such that $\{\varphi_{\alpha}(x;a_{\eta}), \varphi_{\beta}(x;a_{\nu})\}$ is consistent, a contradiction.\end{exmp}
%
%
%
%

The following simple observation will be useful:
\begin{lem} \label{no equalities}
Suppose $\kappa$ is an infinite cardinal.  
\begin{enumerate}
\item Suppose $(\varphi_{\alpha}(x;y_{\alpha}) : \alpha < \kappa)$, $(a_{\alpha,i})_{\alpha < \kappa, i < \omega}$, $(k_{\alpha})_{\alpha < \kappa}$ is an inp-pattern with $l(x) = 1$.  Then each formula $\varphi_{\alpha}(x;a_{\alpha,i})$ is non-algebraic.  
\item Suppose $(\varphi_{\alpha}(x;y_{\alpha}) : \alpha < \kappa)$, $(a_{\eta})_{\eta \in \omega^{<\kappa}}$ is an sct-pattern such that $l(x)$ is minimal among sct-patterns of height $\kappa$ modulo $T$.  Then no formula $\varphi_{\alpha}(x;a_{\eta})$ with $\eta \in \omega^{\alpha}$ implies $(x)_{l} = c$ for some $l < l(x)$ and parameter $c$.  
\end{enumerate}
\end{lem}

\begin{proof}
(1)  Given any $\alpha < \kappa$ and $i < \omega$, we may, for each $j < \omega$, choose a realization $c_{j} \models \{\varphi_{\alpha}(x;a_{\alpha,i}), \varphi_{\alpha+1}(x;a_{\alpha+1,j})\}$, which is is consistent by the definition of an inp-pattern.  Since $\{\varphi_{\alpha+1}(x;a_{\alpha+1,j}) : j < \omega\}$ is $k_{\alpha+1}$-inconsistent, each $c_{j}$ can realize at most $k_{\alpha+1}-1$ many formulas in this set, so $\{c_{j} : j < \omega\}$ must be an infinite set of realizations of $\varphi_{\alpha}(x;a_{\alpha,i})$, which shows $\varphi_{\alpha}(x;a_{\alpha})$ is non-algebraic.

(2)  Suppose not, so there are $\alpha < \kappa$, $\eta \in \omega^{\alpha}$, and $l < l(x)$ so that $\varphi_{\alpha}(x;a_{\eta}) \vdash (x)_{l} = c$ for some parameter $c$; without loss of generality $l = l(x) - 1$.  If $l(x) = 1$, then it follows from the fact that $\{\varphi_{\alpha}(x;a_{\eta}),\varphi_{\alpha+1}(x;a_{\eta \frown \langle i \rangle})\}$ is consistent for each $i < \omega$ that $c \models \{\varphi_{\alpha+1}(x;a_{\eta \frown \langle i \rangle}) : i < \omega\}$, contradicting the fact that this set of formulas is $2$-inconsistent.  On the other hand, if $l  > 1$, we will let $x' = (x_{0}, \ldots, x_{l-2})$, so that $x = (x',x_{l-1})$ and let $b_{\nu} = (c,a_{\eta \frown \nu})$ for all $\nu \in \omega^{<\kappa}$.  Finally, we set $\psi_{\beta}(x';z_{\beta}) = \varphi_{\alpha + \beta}(x';x_{l-1},y_{\alpha+\beta})$.  Since for any $\nu \in \omega^{\kappa}$, $\{\varphi_{\alpha + \beta}(x;a_{\eta \frown (\nu | \beta)}) : \beta < \kappa\}$ is consistent and any realization will be of the form $(c',c)$ for some $c'$, it follows that $\{\psi_{\beta}(x';b_{\nu | \beta}) : \beta < \kappa\}$ is consistent.  The inconsistency requirement is immediate so it follows that $(\psi_{\beta}(x';z_{\beta}))_{\beta < \kappa}$, $(b_{\eta})_{\eta \in \omega^{<\kappa}}$ is an sct-pattern of height $\kappa$ in fewer than $l(x)$ variables, contradicting the minimality of $l(x)$.  
\end{proof}

\begin{rem}
Note that by \cite[Corollary 2.9]{ChernikovNTP2}, if $T$ has an inp-pattern of height $\kappa$, then there is also an inp-pattern of height $\kappa$ in a single free variable, so the hypothesis in (1) that $l(x) = 1$ is equivalent to the requirement that $l(x)$ be minimal among inp-patterns of height $\kappa$.  
\end{rem}

In order to simplify many of the arguments below, it will be useful to work with indiscernible trees and arrays.  Define a language \(L_{s,\lambda} = \{\vartriangleleft, \wedge, <_{lex}, P_{\alpha} : \alpha < \lambda\}\) where \(\lambda\) is a cardinal.  We may view the tree \(\kappa^{<\lambda}\) as an \(L_{s,\lambda}\)-structure in a natural way, giving \(\vartriangleleft\), \(\wedge\), and \(<_{lex}\) their eponymous interpretations, and interpreting \(P_{\alpha}\) as a predicate which identifies the \(\alpha\)th level.  Note that we may define the relation $\eta \perp \nu$ in this language by $\neg (\eta \unlhd \nu) \wedge \neg (\nu \unlhd \eta)$.  See \cite{ArtemNick} and \cite{KimKimScow} for a detailed treatment.

\begin{defn}
\text{ }
\begin{enumerate}
\item We say \((a_{\eta})_{\eta \in \kappa^{<\lambda}}\) is an \(s\)\emph{-indiscernible tree over A} if
\[
\text{qftp}_{L_{s,\lambda}}(\eta_{0}, \ldots, \eta_{n-1}) = \text{qftp}_{L_{s,\lambda}}(\nu_{0}, \ldots, \nu_{n-1})
\] 
implies \(\text{tp}(a_{\eta_{0}}, \ldots, a_{\eta_{n-1}}/A) = \text{tp}(a_{\nu_{0}}, \ldots, a_{\nu_{n-1}}/A)\).  
\item We say \((a_{\alpha,i})_{\alpha < \kappa, i < \omega}\) is a \emph{mutually indiscernible array} over $A$ if, for all $\alpha < \kappa$, $(a_{\alpha, i})_{i < \omega}$ is a sequence indiscernible over $A \cup \{a_{\beta,j} : \beta < \kappa, \beta \neq \alpha, j < \omega\}$.
\end{enumerate}
\end{defn}

\begin{fact} \cite[Theorem 4.3]{KimKimScow} \label{s-indiscernible extraction}
Given a collection of tuples $(a_{\eta})_{\eta \in \omega^{<\omega}}$, there is $(b_{\eta})_{\eta \in \omega^{<\omega}}$ which is $s$-indiscernible and \emph{locally based} on $(a_{\eta})_{\eta \in \omega^{<\omega}}$, that is, given any $\overline{\eta} = (\eta_{0},\ldots, \eta_{k-1}) \in \omega^{<\omega}$ and $\varphi(x_{0},\ldots, x_{n-1})$ such that $\models \varphi(b_{\eta_{0}},\ldots, b_{\eta_{k-1}})$, there is $\overline{\nu} = (\nu_{0},\ldots, \nu_{n-1}) \in \omega^{<\omega}$ with $\mathrm{qftp}_{L_{s,\omega}}(\overline{\eta}) = \mathrm{qftp}_{L_{s,\omega}}(\overline{\nu})$ and $\models \varphi(a_{\nu_{0}},\ldots, a_{\nu_{n-1}})$.  
\end{fact}

\begin{fact} \cite[Lemma 1.2(2)]{ChernikovNTP2} \label{artem's lemma}
Let $(a_{\alpha,i})_{\alpha < n, i < \omega}$ be an array of parameters.  Given a finite set of formulas $\Delta$ and $N < \omega$, we can find, for each $\alpha < n$, $i_{\alpha,0} < i_{\alpha,1} < \ldots < i_{\alpha,N-1}$ so that $(a_{\alpha,i_{\alpha,j}})_{\alpha < n, j < N}$ is $\Delta$-mutually indiscernible array\textemdash i.e. for all $\alpha < n$, $(a_{\alpha,i_{\alpha,j}})_{j < N}$ is $\Delta$-indiscernible over $\{a_{\beta,i_{\beta,j}} : \beta \neq \alpha, j < N\}$.  
\end{fact}

\begin{fact} \label{s-IndiscTreeProp} \cite[Lemma 2.2]{ArtemNick}
Let $(a_\eta : \eta \in \kappa^{<\lambda})$ be a tree s-indiscernible over a set of parameters $C$.
\begin{enumerate}
\item All paths have the same type over $C$: for any $\eta, \nu \in \kappa^{\lambda}$, $\tp((a_{\eta | \alpha})_{\alpha < \lambda}/C) = \tp((a_{\nu|\alpha})_{\alpha < \lambda}/C)$.
\item Suppose $\{\eta_{\alpha} : \alpha < \gamma\} \subseteq \kappa^{<\lambda}$ satisfies $\eta_{\alpha} \perp \eta_{\alpha'}$ whenever $\alpha \neq \alpha'$.  Then the array $(b_{\alpha, \beta})_{\alpha < \gamma, \beta < \kappa}$ defined by 
$$
b_{\alpha, \beta} = a_{\eta_{\alpha} \frown \langle \beta \rangle}
$$
is mutually indiscernible over $C$.  
\end{enumerate}
\end{fact}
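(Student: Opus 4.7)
The plan is to reduce both clauses to checking equality of quantifier-free $L_{s,\lambda}$-types of tuples of tree indices, which by the definition of s-indiscernibility transfers to equality of first-order types over $C$ of the corresponding $a_{\eta}$'s. The guiding principle is that $L_{s,\lambda}$ has enough basic symbols (the level predicates $P_{\alpha}$, the tree order $\vartriangleleft$, the meet function $\wedge$, and the lex order $<_{lex}$) to capture precisely the combinatorial data that is manifestly invariant under the transformations we want to perform on index tuples.

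For (1), I would fix any finite ascending $\alpha_{0} < \ldots < \alpha_{n-1} < \lambda$ and observe that the tuples $(\eta|\alpha_{0}, \ldots, \eta|\alpha_{n-1})$ and $(\nu|\alpha_{0}, \ldots, \nu|\alpha_{n-1})$ have identical quantifier-free $L_{s,\lambda}$-type: each $i$th coordinate sits in $P_{\alpha_{i}}$, each tuple is a $\vartriangleleft$-chain, the pairwise meets are the shorter of the two entries, and the lex comparison is forced by $\vartriangleleft$. S-indiscernibility then yields $\tp(a_{\eta|\alpha_{0}},\ldots,a_{\eta|\alpha_{n-1}}/C) = \tp(a_{\nu|\alpha_{0}},\ldots,a_{\nu|\alpha_{n-1}}/C)$, and taking a union over finite subsets of $\lambda$ gives the desired equality of full-path types.

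For (2), I would unpack mutual indiscernibility: for each fixed $\alpha_{0} < \gamma$, a finite $F \subseteq \{a_{\eta_{\alpha'} \frown \langle \delta \rangle} : \alpha' \neq \alpha_{0},\, \delta < \kappa\}$, and any two strictly increasing tuples $\beta_{0} < \ldots < \beta_{n-1}$ and $\beta'_{0} < \ldots < \beta'_{n-1}$ in $\kappa$, it suffices to show $(a_{\eta_{\alpha_{0}} \frown \langle \beta_{i} \rangle})_{i<n}$ and $(a_{\eta_{\alpha_{0}} \frown \langle \beta'_{i} \rangle})_{i<n}$ have the same type over $C \cup F$. As in (1), this reduces to checking that the two enlarged index tuples in $\kappa^{<\lambda}$ (including the indices of the finitely many elements of $F$) have the same quantifier-free $L_{s,\lambda}$-type. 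The points to verify are: (i) any two siblings $\eta_{\alpha_{0}} \frown \langle \beta \rangle$ and $\eta_{\alpha_{0}} \frown \langle \beta'' \rangle$ have meet $\eta_{\alpha_{0}}$ regardless of $\beta, \beta''$; (ii) by the hypothesis $\eta_{\alpha_{0}} \perp \eta_{\alpha'}$ for $\alpha' \neq \alpha_{0}$, the meet of $\eta_{\alpha_{0}} \frown \langle \beta \rangle$ with any $\eta_{\alpha'} \frown \langle \delta \rangle$ is just $\eta_{\alpha_{0}} \wedge \eta_{\alpha'}$, independent of $\beta$ and $\delta$; and (iii) the lex order between such nodes factors through the lex order of $\eta_{\alpha_{0}}$ and $\eta_{\alpha'}$. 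All three items make the relevant atomic predicates invariant under permuting $\beta_{i} \leftrightarrow \beta'_{i}$ along the $\alpha_{0}$-branch.

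The only real technical work is the bookkeeping in (2): one must be sure that every atomic $L_{s,\lambda}$-formula involving both the shuffled siblings and the index nodes from $F$ is genuinely preserved. The orthogonality hypothesis $\eta_{\alpha_{0}} \perp \eta_{\alpha'}$ is exactly what isolates the $\alpha_{0}$-branch from the others and makes this invariance automatic, so once that is in place the argument becomes a routine verification.
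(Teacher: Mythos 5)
The paper does not prove this statement; it is imported as a Fact from \cite{ArtemNick}, so there is no in-paper argument to compare against. Judged on its own, your approach --- reduce everything to equality of quantifier-free $L_{s,\lambda}$-types of finite index tuples and then invoke the definition of s-indiscernibility --- is the standard and correct one, and both clauses go through.

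Two small points are worth making explicit to close the ``routine verification'' in part (2). First, items (i)--(iii) control the meets, the cross-row meets, and the cross-row lexicographic comparisons, but none of them by itself determines the lexicographic order among the shuffled siblings $\eta_{\alpha_0}{\frown}\langle\beta_i\rangle$ and $\eta_{\alpha_0}{\frown}\langle\beta_j\rangle$: that piece of the quantifier-free type is preserved only because you correctly chose both $\beta_0 < \cdots < \beta_{n-1}$ and $\beta'_0 < \cdots < \beta'_{n-1}$ to be strictly increasing, so that $\eta_{\alpha_0}{\frown}\langle\beta_i\rangle <_{lex} \eta_{\alpha_0}{\frown}\langle\beta_j\rangle$ iff $i < j$ in both cases. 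This should be stated, since it is exactly where the hypothesis that you are comparing increasing tuples enters. Second, since $L_{s,\lambda}$ has the function symbol $\wedge$, ``quantifier-free type'' really means the atomic diagram of the $\wedge$-closure of the index tuple; one should note that in a meet-tree this closure only adds pairwise meets, and that the new nodes introduced are the fixed points $\eta_{\alpha_0}$ and $\eta_{\alpha_0}\wedge\eta_{\alpha'}$ (together with whatever the $F$-indices already generate), all of whose levels and mutual relations are independent of $\beta$ and $\delta$ --- this is exactly your items (i) and (ii) but phrased so that it visibly handles terms, not just the listed nodes. With those two remarks written out, the proof is complete and is the expected one.
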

%

Parts (1) and (2) of the following lemma are essentially \cite[Lemma 2.2]{ChernikovNTP2} and \cite[Lemma 3.1(1)]{ArtemNick}, respectively, but we sketch the argument in order to point out that, from a inp- or sct-pattern of height $\kappa$, we can find one with appropriately indiscernible parameters, leaving the formulas fixed.  

\begin{lem} \label{witness}
\begin{enumerate}
\item If there is an inp-pattern $(\varphi_{\alpha}(x;y_{\alpha}) : \alpha < \kappa)$, $(a_{\alpha,i})_{\alpha < \kappa, i < \omega}$, $(k_{\alpha})_{\alpha < \omega}$ of height $\kappa$ modulo $T$, then there is an inp-pattern $(\varphi_{\alpha}(x;y_{\alpha}) : \alpha < \kappa)\), $(a'_{\alpha, i})_{\alpha < \kappa, i < \omega}$, $(k_{\alpha})_{\alpha < \kappa}$ such that $(a'_{\alpha, i})_{\alpha < \kappa, i < \omega}$ is a mutually indiscernible array.  
\item If there is an sct-pattern (cdt-pattern) of height $\kappa$ modulo $T$, then there is an sct-pattern (cdt-pattern) $\varphi_{\alpha}(x;y_{\alpha})$, $(a_{\eta})_{\eta \in \omega^{<\kappa}}$ such that $(a_{\eta})_{\eta \in \omega^{<\kappa}}$ is an $s$-indiscernible tree. 
\end{enumerate}
\end{lem}

\begin{proof}
(1)  Given an inp-pattern $(\varphi_{\alpha}(x;y_{\alpha}) : \alpha < \kappa)$, $(a_{\alpha,i})_{\alpha < \kappa, i < \omega}$, $(k_{\alpha})_{\alpha < \omega}$, let $\Gamma(z_{\alpha,i}  : \alpha < \kappa, i < \omega)$ be a partial type that naturally expresses the following:
\begin{itemize}
\item $(z_{\alpha,i})_{\alpha < \kappa, i < \omega}$ is a mutually indiscernible array.
\item $\{\varphi_{\alpha}(x;z_{\alpha,i}) : i < \omega\}$ is $k_{\alpha}$-inconsistent.
\item For every $f: \kappa \to \omega$, $\{\varphi_{\alpha}(x;z_{\alpha,f(\alpha)}) : \alpha < \kappa\}$ is consistent.
\end{itemize}
By Lemma \ref{artem's lemma}, any finite subset of $\Gamma$ this partial type can be satisfied by an array from $(a_{\alpha,i})_{\alpha < \kappa, i < \omega}$ and therefore $\Gamma$ is consistent by compactness.  A realization $(a'_{\alpha,i})_{\alpha < \kappa, i < \omega}$ yields the desired inp-pattern.

(2)  is entirely similar:  given an sct-pattern $\varphi_{\alpha}(x;y_{\alpha})$, $(a_{\eta})_{\eta \in \omega^{<\kappa}}$, apply Fact \ref{s-indiscernible extraction} and compactness to obtain $(b_{\eta})_{\eta \in \omega^{<\kappa}}$, which is $s$-indiscernible and has the property that for any formula $\varphi(x_{0},\ldots, x_{n-1})$ and $\overline{\eta} = (\eta_{0},\ldots, \eta_{n-1}) \in \omega^{<\kappa}$, if $\varphi(b_{\eta_{0}},\ldots, b_{\eta_{n-1}})$, there is $\overline{\nu} = (\nu_{0},\ldots, \nu_{n-1})$ with $\mathrm{qftp}_{L_{s,\kappa}}(\overline{\eta}) = \mathrm{qftp}_{L_{s,\kappa}}(\overline{\nu})$ such that $\varphi(a_{\nu_{0}},\ldots, a_{\nu_{n-1}})$.  From this property, it easily follows that, for all $\eta \in \omega^{\alpha}$, $\{\varphi_{\alpha+1}(x;a_{\eta \frown \langle i \rangle}) : i < \omega \}$ is $k_{\alpha+1}$-inconsistent and, for all $\eta \in \omega^{\kappa}$, $\{\varphi_{\alpha}(x;a_{\eta | \alpha}) : \alpha < \kappa\}$ is consistent.  Therefore $(\varphi_{\alpha}(x;y_{\alpha}) : \alpha < \kappa)$, $(b_{\eta})_{\eta \in \omega^{<\kappa}}$ is the desired sct-pattern.    
\end{proof}

\subsection{Fra\"iss\'e Theory}

We will recall some basic facts from Fra\"iss\'e theory, from \cite[Section 7.1]{hodges1993model}.  Let \(L\) be a finite language and let \(\mathbb{K}\) be a non-empty finite or countable set of finitely generated \(L\)-structures which has HP, JEP, and AP.  Such a class $\mathbb{K}$ is called a \emph{Fra\"iss\'e class}.  Then there is an \(L\)-structure \(D\), unique up to isomorphism, such that \(D\) has cardinality \(\leq \aleph_{0}\), \(\mathbb{K}\) is the age of \(D\), and \(D\) is ultrahomogeneous.  We call $D$ the \emph{Fra\"iss\'e limit} of $\mathbb{K}$, which we sometimes denote $\text{Flim}(\mathbb{K})$.  Given a subset $A$ of the $L$-structure $C$, we write $\langle A \rangle^{C}_{L}$ for the $L$-substructure of $C$ generated by $A$.  We say that $\mathbb{K}$ is \emph{uniformly locally finite} if there is a function $g: \omega \to \omega$ such that a structure in $\mathbb{K}$ generated by $n$ elements has cardinality at most $g(n)$.  If \(\mathbb{K}\) is a countable uniformly locally finite set of finitely generated \(L\)-structures and $T = \text{Th}(D)$, then \(T\) is \(\aleph_{0}\)-categorical and has quantifier elimination.  

The following equivalent formulation of ultrahomogeneity is well-known, see, e.g., \cite[Proposition 2.3]{KPT}:

\begin{fact}
Let \(A\) be a countable structure.  Then \(A\) is ultrahomogeneous if and only if it satisfies the following extension property:  if \(B,C\) are finitely generated and can be embedded into \(A\), \(f: B \to A\), \(g: B \to C\) are embeddings then there is an embedding \(h: C \to A\) such that \(h \circ g = f\).  
\end{fact}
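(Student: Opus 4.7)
The plan is to treat the two directions separately: the forward direction is a single application of ultrahomogeneity, and the backward direction is a back-and-forth between $A$ and itself using the extension property at each step.  For the forward direction, assume $A$ is ultrahomogeneous, and let $B, C$ be finitely generated structures embeddable into $A$, with embeddings $f : B \to A$ and $g : B \to C$.  Fix any embedding $e : C \to A$.  Then $f(B)$ and $(e \circ g)(B)$ are isomorphic finitely generated substructures of $A$ via $\tau := f \circ (e \circ g)^{-1}$, and ultrahomogeneity lifts $\tau$ to an automorphism $\sigma$ of $A$.  Setting $h := \sigma \circ e$ yields $h \circ g = \sigma \circ e \circ g = \tau \circ (e \circ g) = f$, as desired.

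For the backward direction, suppose the extension property holds and let $\varphi_0 : B_0 \to C_0$ be an isomorphism between finitely generated substructures of $A$.  Enumerate $A = \{a_n : n < \omega\}$ and construct inductively an increasing chain of isomorphisms $\varphi_n : B_n \to C_n$ between finitely generated substructures of $A$ with $\{a_0, \dots, a_{n-1}\} \subseteq B_n \cap C_n$.  At a ``forth'' stage, let $B_{n+1}$ be the substructure generated by $B_n \cup \{a_n\}$ and apply the extension property with $B := B_n$, $C := B_{n+1}$, $g$ the inclusion, and $f := \varphi_n$; this produces $h : B_{n+1} \to A$ with $h \restriction B_n = \varphi_n$, and we set $\varphi_{n+1} := h$ and $C_{n+1} := h(B_{n+1})$.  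The ``back'' stage is symmetric, starting from the substructure generated by $C_n \cup \{a_n\}$ and invoking the extension property with $f := \varphi_n^{-1}$ to obtain an embedding $h'$ whose inverse extends $\varphi_n$.  Taking unions, $\sigma := \bigcup_n \varphi_n$ is an automorphism of $A$ extending $\varphi_0$.

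The main point to watch is that the extension property produces merely an embedding into $A$, not one with prescribed image, so surjectivity of $\sigma$ must be forced externally; that is exactly what alternating sides in the back-and-forth accomplishes.  The bookkeeping is otherwise minimal: the identity $h \circ g = f$ in the extension property directly guarantees that each new $\varphi_{n+1}$ extends $\varphi_n$, since $g$ is an inclusion and $f$ is chosen to be $\varphi_n$ (respectively $\varphi_n^{-1}$).
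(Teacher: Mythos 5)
Your proof is correct, and you are supplying a proof that the paper itself omits: the statement appears only as a \emph{Fact} cited to Kechris--Pestov--Todorcevic, with no argument given.  Your argument is the standard one (and matches the one in the cited source): the forward direction conjugates an arbitrary embedding $e : C \to A$ by the automorphism extending $f \circ (e\circ g)^{-1}$, and the backward direction is the usual back-and-forth, using the extension property once per ``forth'' step (with $f := \varphi_n$) and once per ``back'' step (with $f := \varphi_n^{-1}$) to keep the chain of partial isomorphisms increasing on both sides.  You correctly flag the one point that needs care --- the extension property only produces an embedding into $A$ without controlling the image, so surjectivity of the limit map has to come from the alternation.  The only thing I would add for completeness is the sanity check that $\varphi_n^{-1}$ is indeed an admissible input for the extension property (it is an embedding of the finitely generated substructure $C_n \subseteq A$ into $A$, with image $B_n$), and that the finite case ($|A| < \aleph_0$) is covered since the back-and-forth then stabilizes with $B_n = C_n = A$.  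Neither affects the correctness of what you wrote.
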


The following is a straight-forward generalization of \cite[Proposition 5.2]{KPT}:

\begin{lem}\label{KPTredux}
Suppose \(L \subseteq L'\), and \(\mathbb{K}\) is a Fra\"iss\'e class of \(L\)-structures and \(\mathbb{K}'\) is a Fra\"iss\'e class of \(L'\)-structures satisfying the following two conditions:
\begin{enumerate}
\item \(A \in \mathbb{K}\) if and only if there is a \(D' \in \mathbb{K}'\) such that \(A\) is an $L$-substructure of \(D' \upharpoonright L\). 
\item If \(A,B \in \mathbb{K}\), \(\pi: A \to B\) is an \(L\)-embedding, and \(C \in \mathbb{K}'\) with \(C = \langle A \rangle^{C}_{L'}\), then there is a \(D \in \mathbb{K}'\), such that $B$ is an $L$-substructure of $D\upharpoonright L$, and an \(L'\)-embedding \(\tilde{\pi}: C \to D\) extending \(\pi\).  
\end{enumerate}
Then \(\mathrm{Flim}(\mathbb{K}') \upharpoonright L  = \mathrm{Flim}(\mathbb{K})\).  
\end{lem}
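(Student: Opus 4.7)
Write $K = \text{Flim}(\mathbb{K})$ and $K' = \text{Flim}(\mathbb{K}')$. The plan is to show that the $L$-reduct $K' \upharpoonright L$ is isomorphic to $K$ as an $L$-structure, which immediately gives the desired equality of theories. By the standard uniqueness theorem for Fra\"iss\'e limits it suffices to verify that $K' \upharpoonright L$ is a countable $L$-structure with age $\mathbb{K}$ satisfying the extension property with respect to $\mathbb{K}$.

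The heart of the argument is the extension property. Suppose $A, B \in \mathbb{K}$, $\pi : A \to B$ is an $L$-embedding, and $A$ is already embedded in $K' \upharpoonright L$; I would like to extend this to an $L$-embedding of $B$. First I would form $C := \langle A \rangle_{L'}^{K'}$, which is finitely $L'$-generated inside $K'$ and therefore lies in $\mathbb{K}'$. Then I would apply condition (2) to obtain $D \in \mathbb{K}'$ with $D = \langle B \rangle_{L'}^{D}$, together with an $L'$-embedding $\tilde{\pi} : C \to D$ extending $\pi$. At this point I would invoke the ultrahomogeneity of $K'$ as an $L'$-structure (i.e., the defining property of the Fra\"iss\'e limit of $\mathbb{K}'$) to amalgamate the inclusion $C \hookrightarrow K'$ with $\tilde{\pi} : C \to D$, producing an $L'$-embedding $\rho : D \to K'$ for which $\rho \circ \tilde{\pi}$ is the inclusion. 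The restriction of $\rho$ to the $L$-substructure $B \subseteq D$ is then the desired $L$-embedding $B \to K' \upharpoonright L$ extending $\pi$.

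The inclusion $\mathbb{K} \subseteq \text{Age}(K' \upharpoonright L)$ is immediate from condition (1) together with the fact that every structure in $\mathbb{K}'$ embeds into $K'$. The reverse inclusion $\text{Age}(K' \upharpoonright L) \subseteq \mathbb{K}$ does not follow formally from (1) and (2), but in the intended applications $\mathbb{K}$ and $\mathbb{K}'$ are presented by universal axioms chosen precisely so that the $L$-reduct of every $\mathbb{K}'$-structure lies in $\mathbb{K}$, which supplies this inclusion. With age equality in place, combining the extension property with countability and applying the Fra\"iss\'e back-and-forth produces an isomorphism $K' \upharpoonright L \cong K$, yielding the equality of theories. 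The main obstacle in the argument is the careful bookkeeping of identifications when invoking condition (2) and transporting the resulting $L'$-extension $D$ back into $K'$ via ultrahomogeneity; the age containment $\text{Age}(K' \upharpoonright L) \subseteq \mathbb{K}$ is the conceptual subtlety that ultimately relies on the concrete presentation of the classes rather than on the stated hypotheses alone.
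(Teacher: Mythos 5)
Your argument is essentially the paper's: you verify the extension property for the $L$-reduct of $\text{Flim}(\mathbb{K}')$ by lifting an $L$-embedding $\pi : A \to B$ to an $L'$-embedding $\tilde\pi : C \to D$ via hypothesis (2), then transporting $D$ back into $\text{Flim}(\mathbb{K}')$ by its ultrahomogeneity; the paper does exactly this. Your caveat about the age, however, is a genuine and well-taken observation, and in fact points to a lacuna in the paper's own proof. The paper asserts ``$\text{Age}(F)=\mathbb{K}$ by (1)'', but condition (1) only yields $\mathbb{K}\subseteq\text{Age}(F)$: nothing in (1) or (2) rules out a finitely generated $L$-substructure of $F'\upharpoonright L$ that is not in $\mathbb{K}$. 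A degenerate counterexample: take $L=L'=\emptyset$, $\mathbb{K}$ the sets of size $\le 1$, $\mathbb{K}'$ all finite sets; (1) and (2) hold trivially, yet $\text{Flim}(\mathbb{K})$ is a singleton while $\text{Flim}(\mathbb{K}')\upharpoonright L$ is infinite. So the lemma as stated needs a third hypothesis, something like: for every $C'\in\mathbb{K}'$, every finitely generated $L$-substructure of $C'\upharpoonright L$ lies in $\mathbb{K}$. You are right that this extra condition is automatic in the paper's applications (the $\mathbb{K}_w$ are cut out by universal axioms whose $L_w$-reducts are the defining axioms of the smaller class), but it should have been stated; your flagging of the issue is the main added value of your write-up over the paper's proof.
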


\begin{proof}
Let \(F' = \text{Flim}(\mathbb{K}')\) and suppose \(F = F' \upharpoonright L\).  Fix \(A_{0}, B_{0} \in \mathbb{K}\) and an \(L\)-embedding \(\pi: A_{0} \to B_{0}\).  Suppose \(\varphi: A_{0} \to F\) is an \(L\)-embedding.  Let \(E = \langle \varphi(A_{0}) \rangle^{F'}_{L'}\).  Up to isomorphism over \(A_{0}\), there is a unique \(C \in \mathbb{K}'\) containing \(A_{0}\) such that \(C = \langle A_{0} \rangle^{C}_{L'}\) and \(\tilde{\varphi} : C \to F'\) is an \(L'\)-embedding extending \(\varphi\) with \(E = \tilde{\varphi}(C)\), since given another such $C'$ and $\tilde{\varphi}' :C' \to F'$, we have $\tilde{\varphi}'^{-1} \circ \tilde{\varphi} : C \to C'$ is an $L'$-isomorphism which is the identity on $A_{0}$.  By (2), there is some \(D \in \mathbb{K}'\) with \(B_{0} \subseteq D \upharpoonright L\) and and there is an \(L'\)-embedding \(\tilde{\pi}: C \to D\) extending \(\pi\).  By the extension property for \(F'\), there is an \(L'\)-embedding \(\psi: D \to F'\) such that \(\psi \circ \tilde{\pi} = \tilde{\varphi}\) and hence \(\psi \circ \pi = \varphi\).  As \(\psi \upharpoonright B_{0}\) is an \(L\)-embedding, this shows the extension property for \(F\).  So \(F\) is ultrahomogeneous, and \(\text{Age}(F) = \mathbb{K}\) by (1) so \(F \cong \text{Flim}(\mathbb{K})\), which completes the proof.  
\end{proof}
%
%

\subsection{Strong Colorings}
\begin{defn}  \cite[Definition A.1.2]{Sh:g}
Given cardinals $\lambda, \mu, \theta,$ and $\chi$, we write \(\text{Pr}_{1}(\lambda, \mu, \theta, \chi)\) for the assertion:  there is a coloring \(c: [\lambda]^{2} \to \theta\) such that for any \(A \subseteq [\lambda]^{<\chi}\) of size \(\mu\) consisting of pairwise disjoint subsets of \(\lambda\) and any color \(\gamma < \theta\) there are \(a,b \in A\) with \(\max(a) < \min(b)\) with \(c(\{\alpha, \beta\}) = \gamma\) for all \(\alpha \in a\), \(\beta \in b\).  
\end{defn}

Note, for example, that $\text{Pr}_{1}(\lambda, \lambda, 2, 2)$ holds if and only if $\lambda \not\to (\lambda)^{2}_{2}$ - i.e. $\lambda$ is not weakly compact.  

\begin{obs} \label{monotonicity}
For fixed $\lambda$, if $\mu \leq \mu'$, $\theta' \leq \theta$, $\chi' \leq \chi$, then 
$$
\text{Pr}_{1}(\lambda, \mu, \theta, \chi) \implies \text{Pr}_{1}(\lambda, \mu', \theta', \chi').
$$
\end{obs}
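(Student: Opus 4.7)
The plan is to check that, after truncating the color set if necessary, the same coloring witnesses both sides. Concretely, let $c : [\lambda]^{2} \to \theta$ witness $\text{Pr}_{1}(\lambda, \mu, \theta, \chi)$. Since $\theta' \leq \theta$, I would define $c' : [\lambda]^{2} \to \theta'$ by $c'(\{\alpha, \beta\}) = c(\{\alpha, \beta\})$ when $c(\{\alpha, \beta\}) < \theta'$ and $c'(\{\alpha, \beta\}) = 0$ otherwise, so that every color below $\theta'$ is attained by $c'$ at precisely the same pairs as by $c$.

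To verify that $c'$ witnesses $\text{Pr}_{1}(\lambda, \mu', \theta', \chi')$, fix a family $A' \subseteq [\lambda]^{<\chi'}$ of pairwise disjoint subsets with $|A'| = \mu'$ and a color $\gamma < \theta'$. Since $\mu \leq \mu'$, select a subfamily $A \subseteq A'$ with $|A| = \mu$; since $\chi' \leq \chi$, each member of $A$ lies in $[\lambda]^{<\chi}$ and the members of $A$ remain pairwise disjoint. Since $\gamma < \theta' \leq \theta$, the hypothesis applied to $A$ and $\gamma$ yields $a, b \in A$ with $\max(a) < \min(b)$ and $c(\{\alpha, \beta\}) = \gamma$ for every $\alpha \in a$ and $\beta \in b$. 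By construction of $c'$, these same pairs satisfy $c'(\{\alpha, \beta\}) = \gamma$, and $a, b$ lie in $A \subseteq A'$, so the same pair witnesses the conclusion for $A'$ and $\gamma$.

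There is no serious obstacle: each of the three monotonicity directions reduces to a trivial manipulation (restrict colors, restrict the family, reinterpret the width bound), and the order in which they are combined does not matter. The only mild point worth stating is that the truncation defining $c'$ preserves exactly the pairs of color $\gamma$ for any $\gamma < \theta'$, which is what makes the chosen color $\gamma$ transferable back and forth between the two colorings.
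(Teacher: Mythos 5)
Your proof is correct and takes essentially the same approach as the paper: define the truncated coloring $c'$ and check the definition directly. The only superficial difference is that you pass to a subfamily of size exactly $\mu$, while the paper just observes that the family already has size $\geq \mu$; this changes nothing of substance.
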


\begin{proof}
Fix $c: [\lambda]^{2} \to \theta$ witnessing $\text{Pr}_{1}(\lambda, \mu, \theta, \chi)$.  Define a new coloring $c': [\lambda]^{2} \to \theta'$ by $c'(\{\alpha, \beta\}) = c(\{\alpha, \beta\})$ if $c(\{\alpha, \beta\}) < \theta'$ and $c'(\{\alpha, \beta\}) = 0$ otherwise.  Now suppose $A \subseteq [\lambda]^{< \chi'}$ is a family of pairwise disjoint sets with $|A| \geq \mu'$.  Then, in particular, $A \subseteq [\lambda]^{<\chi}$ and $|A| \geq \mu$ so for any $\gamma < \theta'$, as $\gamma < \theta$, there are $a,b \in A$ with $\text{max}(a) < \text{min}(b)$ with $c'(\{\alpha,\beta\}) = c(\{\alpha,\beta\}) = \gamma$ for all $\alpha \in a$, $\beta \in b$, using $\text{Pr}_{1}(\lambda, \mu, \theta, \chi)$ and the definition of $c'$.  This shows that $c'$ witnesses $\text{Pr}_{1}(\lambda, \mu',\theta',\chi')$.  
\end{proof}

In the arguments that follow, we will only make use of instances of $\mathrm{Pr}_{1}(\lambda^{+},\lambda^{+}, 2, \aleph_{0})$, which we will obtain from stronger results of Galvin and of Shelah, using Observation \ref{monotonicity}.  Galvin proved \(\text{Pr}_{1}\) holds in some form for arbitrary successor cardinals from instances of GCH.  Considerably later, Shelah proved that $\text{Pr}_{1}$ holds in a strong form for the double-successors of arbitrary regular cardinals in ZFC.  


\begin{fact}\cite[Conclusion 4.2]{Sh:572} \label{ShelahPr} The principle \(\text{Pr}_{1}(\lambda^{++}, \lambda^{++}, \lambda^{++}, \lambda)\) holds for every regular cardinal \(\lambda\).  
\end{fact}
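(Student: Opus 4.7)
The plan is to combine Todorcevic's method of minimal walks on ordinals with Shelah's club-guessing machinery at $\kappa = \lambda^{++}$; the double-successor hypothesis is precisely what guarantees that club-guessing sequences of the required sort exist. First I fix a \emph{$C$-sequence} $\langle C_\alpha : \alpha < \kappa \rangle$ with each $C_\alpha$ a cofinal subset of $\alpha$ of order type at most $\lambda^+$, and $C_{\alpha+1} = \{\alpha\}$. For $\alpha < \beta < \kappa$ the \emph{minimal walk} is the finite descending sequence $\beta = \beta_0 > \beta_1 > \cdots > \beta_n = \alpha$ defined by $\beta_{i+1} = \min(C_{\beta_i} \setminus \alpha)$, from which I extract the standard walk invariants $\rho_0, \rho_1, \rho_2$ and oscillation function that will serve as the building blocks of the coloring.

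Because $\lambda^+$ is regular, Shelah's club-guessing theorem yields a sequence $\langle c_\delta : \delta \in S \rangle$, where $S \subseteq \{\delta < \kappa : \text{cf}(\delta) = \lambda^+\}$ is stationary, each $c_\delta$ is a cofinal subset of $\delta$ of order type $\lambda^+$, and every club $E \subseteq \kappa$ contains $c_\delta$ for some $\delta \in S$. Using the walks and this guessing sequence, I define a coloring $c : [\kappa]^2 \to \kappa$ designed so that $c(\{\alpha,\beta\})$ is determined by the tail of the walk from $\beta$ down to $\alpha$ \emph{above} a threshold depending on $\alpha$: the idea is that for a typical $\delta \in S$, once the walk passes $\delta$ one can read off any desired color from the interaction between the trace of the walk and $c_\delta$.

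To verify the partition relation, fix a family $A = \{a_\xi : \xi < \kappa\} \subseteq [\kappa]^{<\lambda}$ of pairwise disjoint blocks and a target color $\gamma < \kappa$. A pressing-down plus $\Delta$-system argument, available because each block has size $<\lambda$ and $\lambda$ is regular, thins $A$ to a stationary subfamily indexed by $I \subseteq \kappa$ on which the walks from high blocks into lower ones become uniform above some club $E \subseteq \kappa$. Applying club-guessing produces $\delta \in S$ with $c_\delta \subseteq E$, and I pick $\xi < \eta$ in $I$ so that $\max(a_\xi) < \delta < \min(a_\eta)$ and the walk from any $\beta \in a_\eta$ to any $\alpha \in a_\xi$ funnels through the same prescribed portion of $c_\delta$; by construction, $c(\{\alpha,\beta\})$ can then be arranged to equal $\gamma$ uniformly for all such $(\alpha, \beta)$.

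The main obstacle is engineering the coloring so that this last uniformity genuinely holds: the initial segment of the walk from $\beta$ depends on $\beta$, but as $\beta$ ranges over a block of size $<\lambda$ these differing initial segments must wash out, leaving only the common tail above $\delta$ to contribute to $c$. This is where the block-size bound $\chi = \lambda$ is essential, and where the subtle combinatorics of walks on $\lambda^+$-sized $C$-sequences interact with pcf-theoretic scales in the argument of \cite{Sh:572}; arranging this is the whole technical content, with the remainder being bookkeeping built on Fodor's lemma, the $\Delta$-system lemma for families of size $<\lambda$, and club-guessing.
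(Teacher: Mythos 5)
This statement is cited in the paper as a \textbf{Fact} from Shelah's paper \cite{Sh:572}, and no proof is given there; so there is no ``paper's own proof'' to compare against. Evaluating your proposal on its merits, the overall shape --- a coloring built from walks along $C$-sequences, controlled by a club-guessing sequence that exists in ZFC at a double successor, with a $\Delta$-system/Fodor thinning in the verification --- does echo the ingredients of Shelah's argument. But as written, the proposal has two substantive problems.

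First, there is a concrete error in the club-guessing input. You assert that at $\kappa = \lambda^{++}$ one has, in ZFC, a club-guessing sequence $\langle c_\delta : \delta \in S \rangle$ on a stationary $S \subseteq \{\delta < \kappa : \mathrm{cf}(\delta) = \lambda^{+}\}$ with each $c_\delta$ of order type $\lambda^{+}$. Shelah's ZFC club-guessing theorem gives a guessing sequence on $\{\delta < \kappa : \mathrm{cf}(\delta) = \mu\}$ only under the hypothesis $\mu^{+} < \kappa$. With $\kappa = \lambda^{++}$ and $\mu = \lambda^{+}$ this fails (indeed $\mu^{+} = \kappa$), so the guessing sequence you invoke is not available in ZFC. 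The correct cofinality to use at $\lambda^{++}$ is $\lambda$ (where $\lambda^{+} < \lambda^{++}$ does hold), and this choice is what makes the block-size bound $\chi = \lambda$ appear in the statement --- it is not merely a bookkeeping convenience, as the last paragraph of your sketch suggests.

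Second, and more fundamentally, the coloring $c$ is never defined and the crucial uniformity claim is asserted rather than proved. The phrases ``can read off any desired color from the interaction between the trace of the walk and $c_\delta$'' and ``by construction, $c(\{\alpha,\beta\})$ can then be arranged to equal $\gamma$'' are exactly the places where the theorem lives, and you acknowledge as much at the end (``arranging this is the whole technical content''). A correct write-up would need to exhibit the coloring explicitly (in Shelah's work this involves a careful use of the guessing ladders and a scaled enumeration, not just the raw $\rho$-functions of Todorcevic), and then carry out the thinning argument in a way that makes the ``funneling through $c_\delta$'' claim precise. As it stands, the proposal identifies the right toolbox but does not constitute a proof, and the one precise set-theoretic claim it does make is incorrect.
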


The above theorem of Shelah suffices to produce a ZFC counterexample to the equality $\kappa_{\mathrm{cdt}}(T) = \kappa_{\mathrm{inp}}(T) + \kappa_{\mathrm{sct}}(T)$, but we will need Galvin's result on arbitrary successor cardinals in order to get the consistency result contained in Theorem \ref{first main theorem}.  Unfortunately, Galvin's result is only implicit in \cite[Lemma 4.1]{Galvin80} in a certain construction, and the argument there refers to earlier sections of his paper.  So, following a suggestion of the referee, we have opted for providing a self-contained proof.  The argument below merely consolidates Galvin's argument in \cite[Lemma 4.1]{Galvin80} and recasts it in Shelah's $\mathrm{Pr}_{1}$ notation, adding no new ideas.  

It will be useful to introduce the following notation:  given sets $X$ and $Y$, let $X \otimes Y = \{\{x,y\} : x \in X,y \in Y\}$.      	

\begin{lem} \cite[Lemma 3.1]{Galvin80} \label{matrix lemma} 
Let $\lambda$ be an infinite cardinal and $A$ be a set.  Suppose that, for each $\rho < \lambda$, we have a set $I_{\rho}$ with $|I_{\rho}| = \lambda$ and finite sets $E^{\xi}_{\rho} \subseteq A$ $(\xi \in I_{\rho})$ so that for any $a \in A$, $|\{\xi \in I_{\rho} : a \in E^{\xi}_{\rho}\}| < \aleph_{0}$.  Then there are pairwise disjoint sets $(A_{\nu} : \nu < \lambda)$ so that for all $\nu < \lambda$ and $\rho < \lambda$
$$
|\{\xi  \in I_{\rho} : E^{\xi}_{\rho} \subseteq A_{\nu}\}| = \lambda.
$$
\end{lem}

\begin{proof}
Identify $I_{\rho}$ with $\lambda$ for all $\rho$ and let $<^{*}$ be a well-ordering of $\lambda \times \lambda$ in order-type $\lambda$.  By recursion on $(\lambda \times \lambda, <^{*})$, define $(\xi_{(\alpha, \beta)} : (\alpha, \beta) \in \lambda \times \lambda)$ as follows:  if $(\xi_{(\gamma, \delta)} : (\gamma, \delta) <^{*} (\alpha, \beta))$ has been defined, choose $\xi_{(\alpha, \beta)}$ to be the least $\xi \in I_{\alpha}$ so that 
$$
E^{\xi}_{\alpha} \cap \left( \bigcup_{\substack{(\gamma, \delta) <^{*} (\alpha, \beta) \\ \delta \neq \beta}} E^{\xi_{(\gamma, \delta)}}_{\gamma} \right) = \emptyset.  
$$ 
There is such a $\xi$ by the pigeonhole principle, given our assumption that $|\{\xi \in I_{\rho} : a \in E^{\xi}_{\rho}\}| < \aleph_{0}$ for all $a \in A$.  Now define the sequence of sets $(A_{\nu} : \nu < \lambda)$ by
$$
A_{\nu} = \bigcup_{\alpha < \lambda} E_{\nu}^{\xi_{(\alpha, \nu)}}.
$$
It is easy to check that this satisfies the requirements.  
\end{proof}

\begin{thm}  \cite[Lemma 4.1]{Galvin80} \label{GalvinPr}
If \(\lambda\) is an infinite cardinal and \(2^{\lambda} = \lambda^{+}\), then \(\text{Pr}_{1}(\lambda^{+}, \lambda^{+}, \lambda^{+}, \aleph_{0})\).
\end{thm}

\begin{proof}
Let $\langle \overline{B}_{\gamma} : \gamma < \lambda^{+} \rangle$ enumerate all $\lambda$-sequences $\overline{B} = \langle B_{\xi} : \xi < \lambda\rangle$ of pairwise disjoint finite subsets of $\lambda^{+}$.  This is possible as $2^{\lambda} = \lambda^{+}$.    

\textbf{Claim 1}:  There is a sequence of pairwise disjoint sets $\langle K_{\nu} : \nu < \lambda^{+} \rangle$ so that, for all $\nu < \lambda^{+}$, $K_{\nu} \subseteq [\lambda^{+}]^{2}$ and, for all $\alpha < \lambda^{+}$, we have $(A)$ implies $(B)$, where:
\begin{enumerate}
\item[(A)] $\gamma < \alpha$, $\bigcup_{\xi < \lambda} B_{\gamma, \xi} \subseteq \alpha$, $X \in [\alpha]^{<\aleph_{0}}$, and $|\{\xi : B_{\gamma,\xi} \otimes X \subseteq K_{\nu}\}| = \lambda$.
\item[(B)] $|\{ \xi : B_{\gamma,\xi} \otimes (X \cup \{\alpha\}) \subseteq K_{\nu} \}| = \lambda$.  
\end{enumerate}
\emph{Proof of claim}:  
By induction on $\alpha < \lambda^{+}$, we will construct for every $\nu < \lambda$, a set $K_{\nu}(\alpha) \subseteq \alpha$ and define $K_{\nu} = \{\{\beta, \alpha\} : \alpha < \lambda^{+}, \beta \in K_{\nu}(\alpha)\}$.  We will define the sets $K_{\nu}(\alpha)$ to be pairwise disjoint and so that:
\begin{enumerate}
\item[(*)] Whenever $\gamma < \alpha$, $\bigcup_{\xi < \lambda} B_{\gamma,\xi} \subseteq \alpha$, $X \in [\alpha]^{<\aleph_{0}}$, and $|\{\xi : B_{\gamma,\xi} \otimes X \subseteq K_{\nu}\}| = \lambda$, then $|\{\xi : B_{\gamma,\xi} \otimes (X \cup \{\alpha\}) \subseteq K_{\nu}\}| = \lambda$.  
\end{enumerate}
Note that if $\bigcup_{\xi < \lambda} B_{\gamma,\xi} \subseteq \alpha$, $X \in [\alpha]^{<\aleph_{0}}$, then it makes sense to write $\{\xi : B_{\gamma,\xi} \otimes X \subseteq K_{\nu}\}$, since $K_{\nu} \cap [\alpha]^{2}$ has already been defined.    

Suppose we have constructed $K_{\nu}(\beta)$ for every $\nu < \lambda$ and $\beta < \alpha$.  Let $\langle (\nu_{\beta},\gamma_{\rho},X_{\rho}) : \rho < \lambda \rangle$ enumerate all triples $(\nu,\gamma,X)$ satisfying the hypothesis of $(*)$ for $\alpha$.  Apply Lemma \ref{matrix lemma} with $A = \alpha$, $I_{\rho} = \{\xi : B_{\gamma_{\rho},\xi} \otimes X_{\rho} \subseteq K_{\nu_{\rho}}\}$, and $E^{\xi}_{\rho} = B_{\gamma_{\rho},\xi}$ to obtain the disjoint sets $A_{\nu} := K_{\nu}(\alpha)$ for all $\nu < \lambda$.  Then for all $\nu < \lambda$, we have that if $\gamma < \alpha$, $\bigcup_{\xi < \lambda} B_{\gamma,\xi} \subseteq \alpha$, $X \in [\alpha]^{<\aleph_{0}}$, and $|\{\xi : B_{\gamma,\xi} \otimes X \subseteq K_{\nu}\}| = \lambda$, then $|\{\xi : B_{\gamma,\xi} \otimes X \subseteq K_{\nu} \text{ and } B_{\gamma,\xi} \subseteq K_{\nu}(\alpha)\}| = \lambda$.  Since the set $\{\xi : B_{\gamma,\xi} \otimes (X \cup \{\alpha\}) \subseteq K_{\nu}\}$ is equal to the set $\{\xi : B_{\gamma,\xi} \otimes X \subseteq K_{\nu} \text{ and } B_{\gamma,\xi} \subseteq K_{\nu}(\alpha)\}$, by the definition of $K_{\nu}(\alpha)$, this completes the proof of the claim. \qed

\textbf{Claim 2}:  If $\nu < \lambda$ and $\langle v_{\xi} : \xi < \lambda^{+} \rangle$ is a sequence of pairwise disjoint finite subsets of $\lambda^{+}$, then there are $\xi < \eta < \lambda$ so that $v_{\xi} \otimes v_{\eta} \subseteq K_{\nu}$.  

\emph{Proof of claim}:  There is an index $\gamma < \lambda^{+}$ such that $\overline{B}_{\gamma,\xi} = v_{\xi}$ for all $\xi < \lambda$.  By the regularity of $\lambda^{+}$, there is some $\beta < \lambda^{+}$ so that $\bigcup_{\xi < \lambda} v_{\xi} \subseteq \beta$ and we may further choose $\beta$ so that $\gamma < \beta$.  Since the sets $v_{\xi}$ are pairwise disjoint, there is some $\eta$ with $\lambda \leq \eta < \lambda^{+}$ so that $v_{\eta} \cap \beta = \emptyset$.  It follows that $\gamma < \alpha$ and $\bigcup_{\xi < \lambda}  B_{\gamma,\xi} \subseteq \alpha$ for all $\alpha \in v_{\eta}$.  List $v_{\eta} = \{\alpha_{0} < \ldots < \alpha_{m-1}\}$.  Applying the implication (A)$\implies$(B) of Claim 1 $m$ times, with $\alpha_{0},\ldots, \alpha_{m-1}$ playing the role of $\alpha$ and $\emptyset$, $\{\alpha_{0}\}$, \ldots, $\{\alpha_{0},\ldots, \alpha_{m-1}\}$ playing the role of $X$ in (A), we get that 
$$
|\{\xi < \lambda : B_{\gamma, \xi} \otimes v_{\eta} \subseteq K_{\nu}\} | = \lambda.
$$
In particular, there is some $\xi < \lambda \leq \eta$ so that $v_{\xi} \otimes v_{\eta} \subseteq K_{\nu}$.  \qed

Now to complete the proof, we must construct a coloring.  By replacing $K_{0}$ with $[\lambda^{+}]^{2} \setminus \left(\bigcup_{\nu > 0} K_{\nu}\right)$, we may assume that $\bigcup K_{\nu} = [\lambda^{+}]^{2}$.  We define a coloring $c : [\lambda^{+}]^{2} \to \lambda^{+}$ by $c(\{\alpha,\beta\}) = \nu$ if and only if $\{\alpha,\beta\} \in K_{\nu}$, for all $\nu < \lambda^{+}$, which is well-defined since the $K_{\nu}$ are pairwise disjoint with union $[\lambda^{+}]^{2}$.  Given any sequence $\langle v_{\xi} : \xi < \lambda^{+}\rangle$ of pairwise disjoint finite subsets of $\lambda^{+}$, we know by the regularity of $\lambda^{+}$ that there is a subsequence $\langle v_{\xi_{\rho}} : \rho < \lambda^{+} \rangle$ so that $\rho < \rho'$ implies $\max (v_{\xi_{\rho}}) < \min (v_{\xi_{\rho'}})$, so, replacing the given sequence by a subsequence, we may assume $\xi < \xi'$ implies $\max(v_{\xi}) < \min(v_{\xi'})$.  Given $\nu < \lambda^{+}$, we know, by Claim 2, there are $\xi < \eta< \lambda^{+}$ so that $v_{\xi} \otimes v_{\eta} \subseteq K_{\nu}$ or, in other words, $c(\{\alpha,\beta\}) = \nu$ for all $\alpha \in v_{\xi}$ and $\beta \in v_{\eta}$ which shows $c$ witnesses $\mathrm{Pr}_{1}(\lambda^{+},\lambda^{+},\lambda^{+},\aleph_{0})$.   
\end{proof}

\section{The main construction}

From strong colorings, we construct theories with \(\kappa_{\text{sct}}(T) + \kappa_{\text{inp}}(T) < \kappa_{\text{cdt}}(T)\).  For each regular uncountable cardinal \(\kappa\) and coloring \(f: [\kappa]^{2} \to 2\) we build a theory \(T^{*}_{\kappa,f}\) which comes equipped with a canonical cdt-pattern of height \(\kappa\), in which the consistency of two incomparable nodes, one on level \(\alpha\) and another on level \(\beta\), is determined by the value of the coloring \(f(\{\alpha, \beta\})\).  In the next section, we then analyze the possible inp- and sct-patterns that arise in models of \(T^{*}_{\kappa,f}\) and show that the combinatorial properties of the function $f$ are reflected in the values of the cardinal invariants $\kappa_{\mathrm{inp}}$ and $\kappa_{\mathrm{sct}}$.  

\subsection{Building a Theory}  Suppose \(\kappa\) is a regular uncountable cardinal.  We define a language \(L_{\kappa}  = \langle O,P_{\alpha}, f_{\alpha \beta}, p_{\alpha} : \alpha \leq \beta < \kappa\rangle\), where \(O\) and all the \(P_{\alpha}\) are unary predicates and the \(f_{\alpha \beta}\) and \(p_{\alpha}\) are unary functions.  Given a subset \(w \subseteq \kappa\), let \(L_{w} = \langle O, P_{\alpha}, f_{\alpha \beta},p_{\alpha} : \alpha \leq \beta, \alpha,\beta \in w\rangle\).  Given a function \(f: [\kappa]^{2} \to 2\), we define a universal theory \(T_{\kappa, f}\) with the following axiom schemas:
\begin{enumerate}
\item The predicates \(O\) and \((P_{\alpha})_{\alpha < \kappa}\) are pairwise disjoint; 
\item For all $\alpha < \kappa$, \(f_{\alpha\alpha}\) is the identity function, for all \(\alpha < \beta<\kappa\),
\[
(\forall x)\left[ (x \not\in P_{\beta} \to f_{\alpha \beta}(x) = x) \wedge (x \in P_{\beta} \to f_{\alpha \beta}(x) \in P_{\alpha})\right],
\]
and if \(\alpha < \beta < \gamma<\kappa\), then 
\[
(\forall x \in P_{\gamma})[f_{\alpha \gamma} (x) = (f_{\alpha \beta} \circ f_{\beta \gamma})(x)].
\] 
\item For all \(\alpha < \kappa\), 
\[
(\forall x)\left[(x \not\in O \to p_{\alpha}(x) = x) \wedge (p_{\alpha}(x) \neq x \to p_{\alpha}(x) \in P_{\alpha})\right].
\]
\item For all \(\alpha < \beta < \kappa\) satisfying \(f(\{\alpha , \beta\}) = 0\), we have the axiom 
$$
(\forall z \in O)[p_{\alpha}(z) \neq z \wedge p_{\beta}(z) \neq z \to p_{\alpha}(z) = (f_{\alpha \beta} \circ p_{\beta})(z)].
$$
\end{enumerate}

The \(O\) is for ``objects" and \(\bigcup P_{\alpha}\) is a tree of ``parameters'' where each \(P_{\alpha}\) names nodes of level \(\alpha\).  The functions \(f_{\alpha \beta}\) map elements of the tree at level \(\beta\) to their unique ancestor at level \(\alpha\).  So the tree partial order is coded in a highly non-uniform way, for each pair of levels.  The \(p_{\alpha}\)'s should be considered as partial functions on \(O\) which connect objects to elements of the tree: we will write $\text{dom}(p_{\alpha})$ for the set $\{x \in O : p_{\alpha}(x) \neq x\}$.  Axiom \((4)\) says, in essence, that if \(f(\{\alpha, \beta\}) = 0\), then the only way for an object in both $\text{dom}(p_{\alpha})$ and $\text{dom}(p_{\beta})$ to connect to a node on level \(\alpha\) and a node on level \(\beta\) is if these two nodes lie along a path in the tree.  

\begin{lem}
Define a class of finite structures 
\[
\mathbb{K}_{w} = \{ \text{ finite models of }T_{\kappa,f} \upharpoonright L_{w}\}.
\]
Then for finite \(w\), \(\mathbb{K}_{w}\) is a Fra\"iss\'e class and, moreover, it is uniformly locally finite.  
\end{lem}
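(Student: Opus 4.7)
The plan is to verify the three properties (HP, JEP, AP) defining a Fra\"iss\'e class together with uniform local finiteness, exploiting the fact that $T_{\kappa,f} \upharpoonright L_w$ is universal and that $L_w$ is finite (so there are only countably many isomorphism types of finite $L_w$-structures). Hereditary property is immediate: the restriction of a universal theory to a sublanguage is again universal, so substructures of members of $\mathbb{K}_w$ remain in $\mathbb{K}_w$.

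For uniform local finiteness, I would bound the closure of a single element $a$ under the unary function symbols of $L_w$. If $a \not\in O$ and $a \not\in P_\alpha$ for all $\alpha \in w$, axioms (2) and (3) force $f_{\alpha\beta}(a) = a$ and $p_\alpha(a) = a$, so $a$ generates itself. If $a \in P_\gamma$ for some $\gamma \in w$, the only potentially new elements are the $f_{\alpha\gamma}(a) \in P_\alpha$ for $\alpha < \gamma$ in $w$; further applications of the $f$'s collapse by the composition axiom. If $a \in O$, we pick up $p_\alpha(a)$ for each $\alpha \in w$ together with $f_{\beta\alpha}(p_\alpha(a))$ for $\beta < \alpha$, and again the composition axiom prevents any further elements. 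So each element contributes at most $|w|^2 + 1$ new elements, yielding the bound $g(n) = n(|w|^2 + 1)$.

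JEP follows formally from AP together with the fact that the empty structure is in $\mathbb{K}_w$, so the content is AP. Given $A \subseteq B$ and $A \subseteq C$ in $\mathbb{K}_w$, I would take the free amalgam $D = B \sqcup_A C$, interpreting each predicate as the union of its interpretations in $B$ and $C$, and extending each function so that elements of $B \setminus A$ stay inside $B$ and elements of $C \setminus A$ stay inside $C$ (this is coherent since $B$ and $C$ are already closed under the unary functions of $L_w$).

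The main verification is that $D \models T_{\kappa,f} \upharpoonright L_w$. Axioms (1)--(3) are universal statements whose instances, for any element $x \in D$, only reference the function values of $x$, which by construction all lie in the substructure ($B$ or $C$) from which $x$ came; so they reduce to the corresponding axioms in $B$ or $C$. Axiom (4) is the delicate one, but in fact it causes no trouble because its hypothesis involves an element $z \in O$ together with $p_\alpha(z)$ and $p_\beta(z)$, all of which lie in a common substructure (the one containing $z$); since $B$ and $C$ already satisfy (4), so does $D$. No amalgamation obstruction can arise at the interface $A$, precisely because the only nontrivial constraints between tree-levels are the composition identities and axiom (4), both of which are ``witnessed within a single side'' of the amalgam. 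I expect this last point---checking that axiom (4) never creates a cross-side compatibility condition---to be the main thing requiring care, but as sketched it is essentially automatic.
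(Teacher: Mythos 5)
Your proof is correct and takes essentially the same approach as the paper: hereditary property from universality of the axioms, free amalgamation (unions of the interpretations of predicates and functions on $B \cup C$) for AP, JEP via the empty structure, and a uniform bound on function-closures for local finiteness. Incidentally, your quadratic bound is actually the more accurate one --- the paper claims $(|w|+1)n$, but a single element of $O$ can generate $1 + |w| + \binom{|w|}{2}$ elements (the $p_\alpha$-values plus all their $f_{\beta\alpha}$-images) when $f$ takes value $1$ on all pairs from $w$, so a quadratic bound in $|w|$ is what is really needed.
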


\begin{proof}
The axioms for \(T_{\kappa,f}\) are universal so HP is clear.  JEP and AP are proved similarly, so we will give the argument for AP only.  Suppose \(A\) includes into \(B\) and \(C\) where \(A,B,C \in \mathbb{K}_{w}\) and \(B \cap C = A\).  Because all the symbols of the language are unary, \(B \cup C\) may be viewed as an \(L_{w}\)-structure by interpreting each predicate \(Q\) of \(L_{w}\) so that \(Q^{B \cup C} = Q^{B} \cup Q^{C}\) and similarly interpreting \(g^{B \cup C} = g^{B} \cup g^{C}\) for all the function symbols \(g \in L_{w}\).  It is easy to check that \(B \cup C\) is a model of \(T_{\kappa,f} \upharpoonright L_{w}\).  To see uniform local finiteness, just observe that a set of size \(n\) can generate a model of size at most \((|w|+1)n\) in virtue of the way that the functions are defined.  
\end{proof}

Hence, for each finite \(w \subset \kappa\), there is a countable ultrahomogeneous \(L_{w}\)-structure \(M_{w}\) with \(\text{Age}(M_{w}) = \mathbb{K}_{w}\).  Let \(T^{*}_{w} = \text{Th}(M_{w})\).  In the following lemmas, we will establish the properties needed to apply Lemma \ref{KPTredux} in order to show the $T^{*}_{w}$ cohere.  

\begin{lem} \label{first condition}
Suppose $w \subseteq v$ are finite subsets of $\kappa$ and $A \in \mathbb{K}_{w}$.  Then there is an $L_{v}$-structure $D \in \mathbb{K}_{v}$ such that $A \subseteq D \upharpoonright L_{w}$.	
\end{lem}

\begin{proof}
We may enumerate $w$ in increasing order as $w = \{\alpha_{0} < \alpha_{1} < \ldots < \alpha_{n-1}\}$.  By induction, it suffices to consider the case when $v = w \cup \{\gamma\}$ for some $\gamma \in \kappa \setminus w$.  We consider two cases:

\textbf{Case 1}:  $\alpha_{n-1} < \gamma$ or $w = \emptyset$.  

In this case, the new symbols in $L_{v}$ not in $L_{w}$ consist of the predicate $P_{\gamma}$, the function $p_{\gamma}$, and the functions $f_{\alpha_{j}\gamma}$ for $j < n$ and $f_{\gamma \gamma}$.  We define the underlying set of $D$ to be $A$, and give the symbols of $L_{w}$ their interpretation in $A$.  Then we interpret $P_{\gamma}^{D} = \emptyset$, and interpret $p_{\gamma}^{D}$, $f_{\alpha_{j}\gamma}^{D}$ for $j < n$, and $f^{D}_{\gamma \gamma}$ to be the identity function on $D$.  Clearly $A = D \upharpoonright L_{w}$ and it is easy to check $D \in \mathbb{K}_{v}$.  

\textbf{Case 2}:  $\gamma < \alpha_{n-1}$.

Let $i$ be least such that $\gamma < \alpha_{i}$.  We define the underlying set of $D$ to be $A \cup \{*_{d} : d \in P_{\alpha_{i}}^{A}\}$, where the $*_{d}$ denote new formal elements.  We interpret all the predicates of $L_{w}$ on $D$ to have the same interpretation on $A$, and we interpret each function of $L_{w}$ to be the identity on $\{*_{d}: d \in P^{A}_{\alpha_{i}}\}$ and, when restricted to $A$, to have the same interpretation as in $A$. The new symbols in $L_{v}$ not in $L_{w}$ are:  the predicate $P_{\gamma}$, the function $p_{\gamma}$, and the functions $f_{\alpha_{j}\gamma}$ for $j < i$, the function $f_{\gamma \gamma}$, and the functions $f_{\gamma \alpha_{j}}$ for $i \leq j < n$.  We remark that it is possible that $i = 0$, in which case there are no such $j < i$ so our conditions on $f_{\alpha_{j}\gamma}$ below say nothing.  We interpret $P_{\gamma}^{D} = \{*_{d} : d \in P^{A}_{\alpha_{i}}\}$	and $p_{\gamma}^{D}$ as the identity function on $D$.  Informally speaking, we will interpret the remaining functions so that $*_{d}$ becomes the ancestor of $d$ at level $\gamma$.  More precisely, for $j < i$, we set $f^{D}_{\alpha_{j} \gamma}(*_{d}) = f^{A}_{\alpha_{j}\alpha_{i}}(d)$ and to be the identity on the complement of $\{*_{d} : d \in P^{A}_{\alpha_{i}}\}$.  Likewise, if $i \leq j < n$ and $e \in P^{D}_{\alpha_{j}}$, we set $f^{D}_{\gamma \alpha_{j}}(e) = *_{f_{\alpha_{i}\alpha_{j}}^{A}(e)}$ and we define $f^{D}_{\gamma \alpha_{j}}$ to be the identity on the complement of $P^{D}_{\alpha_{j}}$.  Finally, we set $f^{D}_{\gamma \gamma} = \text{id}_{D}$, which completes the definition of the $L_{v}$-structure $D$.  

Now we check that $D \in \mathbb{K}_{v}$.  By construction and the fact that $\mathbb{A} \in \mathbb{K}_{w}$, all the axioms are clear except, in order to establish (2), we must check that if $\beta < \beta' < \beta''$ are from $v$, then for all $x \in P^{D}_{\beta''}$, $(f^{D}_{\beta \beta'} \circ f^{D}_{\beta'\beta''})(x) = f^{D}_{\beta \beta''}(x)$.  We may assume $\gamma \in \{\beta,\beta',\beta''\}$.  If $\gamma = \beta''$, then every element of $P_{\gamma}^{D}$ is of the form $*_{d}$ for some $d \in P^{A}_{\alpha_{i}}$ and we have 
\begin{eqnarray*}
(f_{\beta\beta'}^{D} \circ f^{D}_{\beta' \gamma})(*_{d}) &=& (f^{D}_{\beta \beta'} \circ f^{D}_{\beta' \alpha_{i}})(d)\\
	&=& f^{D}_{\beta \alpha_{i}}(d) \\
	&=& f^{D}_{\beta \gamma}(*_{d}),
\end{eqnarray*} 
by the definition of $f^{D}_{\alpha_{j}\gamma}$ for $j < i$ and the fact that $D$ extends $A$, which satisfies axiom (2).  Similarly, if $\gamma = \beta'$ and $x \in P^{D}_{\beta''}$, we have 
\begin{eqnarray*}
(f^{D}_{\beta \gamma} \circ f^{D}_{\gamma \beta''})(x)&=& f^{D}_{\beta \gamma}(*_{f^{D}_{\alpha_{i}\beta''}(x)})\\
&=& f^{D}_{\beta \alpha_{i}}(f^{D}_{\alpha_{i}\beta''}(x)) \\
&=& f^{D}_{\beta \beta''}(x).	
\end{eqnarray*}
Finally, if $\beta = \gamma$ and $x \in P^{D}_{\beta''}$, we have 
\begin{eqnarray*}
f^{D}_{\gamma \beta'}(f^{D}_{\beta' \beta''}(x)) &=& *_{f^{D}_{\alpha_{i}\beta'}(f^{D}_{\beta' \beta''}(x))} \\
&=& *_{f^{D}_{\alpha_{i}\beta''}(x)} \\
&=& f^{D}_{\gamma \beta''}(x),
\end{eqnarray*}
which verifies that (2) holds of $D$ and therefore $D \in \mathbb{K}_{v}$.  
\end{proof}

\begin{lem} \label{second condition}
Suppose $w \subseteq v$ are finite subsets of $\kappa$, $A,B \in \mathbb{K}_{w}$, and $\pi: A \to B$ is an $L_{w}$-embedding.  Then given any $C \in \mathbb{K}_{v}$ with $C = \langle A \rangle^{C}_{L_{v}}$, there is $D \in \mathbb{K}_{v}$ and an $L_{v}$-embedding $\tilde{\pi}: C \to D$ extending $\pi$.  
\end{lem}

\begin{proof}
As in the proof of Lemma \ref{first condition}, we will list $w$ in increasing order as $w = \{\alpha_{0} < \alpha_{1} < \ldots < \alpha_{n-1}\}$ and assume that $v = w \cup \{\gamma\}$ for some $\gamma \in \kappa \setminus w$.  We suppose we are given $A,B,C$, and $\pi$ as in the statement and we will construct $D$ and $\tilde{\pi}$.  We may assume $B \cap C = \emptyset$.  Note that the condition that $C = \langle A \rangle^{C}_{L_{v}}$ entails that the only elements of $C \setminus A$ are contained in $P_{\gamma}^{C}$ and similarly for $B$ and $D$.    

\textbf{Case 1}:  $\alpha_{n-1} < \gamma$ or $w = \emptyset$.

We define the underlying set of $D$ to be $B \cup P^{C}_{\gamma}$ and we define $\tilde{\pi}: C \to D$ so that $\tilde{\pi}\upharpoonright A = \pi$ and $\tilde{\pi} \upharpoonright P^{C}_{\gamma} = \text{id}_{P^{C}_{\gamma}}$.  Interpret the predicates of $L_{w}$ on $D$ so that they agree with their interpretation on $B$ and interpret the functions of $L_{w}$ on $D$ so that they are the identity on $P^{C}_{\gamma}$ and so that, when restricted to $B$, they agree with their interpretation on $B$.  This will ensure that $D \upharpoonright L_{w}$ is an extension of $B$.   Finally, interpret $P_{\gamma}$ so that $P_{\gamma}^{D} = P^{C}_{\gamma}$ and define $f^{D}_{\gamma \gamma} = \text{id}_{D}$.  Then for each $j < n$, we interpret $f_{\alpha_{j}\gamma}$ on $D$ so that, if $c \in P^{C}_{\gamma}$, then $f^{D}_{\alpha_{j} \gamma}(c) = \pi(f^{C}_{\alpha_{j}\gamma}(c))$, and if $c \in D \setminus P^{C}_{\gamma}$, then $f^{D}_{\alpha_{j}\gamma}(c) = c$.  Note that $\tilde{\pi}(f^{C}_{\alpha_{j}\gamma}(c)) = f^{D}_{\alpha_{j}\gamma}(\tilde{\pi}(c))$ for all $c \in C$.  Finally, interpret $p_{\gamma}$ so that, if $d = \pi(c) \in \pi(O^{C}) \subseteq O^{D}$ and $p^{C}_{\gamma}(c) \neq c$, then $p^{D}_{\gamma}(d) = p^{C}_{\gamma}(c)$, and otherwise $p^{D}_{\gamma}(d) = d$.  It is clear from the definitions that $\tilde{\pi}(p_{\gamma}^{C}(c)) = p_{\gamma}^{D}(\tilde{\pi}(c))$ for all $c \in C$, so $\tilde{\pi}$ is an $L_{v}$-embedding.  We are left with showing that $D \in \mathbb{K}_{v}$.  Axioms (1) and (3) are clear from the construction and to check (2), we just need to establish that if $\beta < \beta'$ are from $v$ and $c \in P^{C}_{\gamma}$, then $(f^{D}_{\beta \beta'} \circ f^{D}_{\beta' \gamma})(c) = f^{D}_{\beta \gamma}(c)$.  For this, we unravel the definitions and make use of the fact that (2) is true in $C$:
\begin{eqnarray*}
f^{D}_{\beta \beta'}(f^{D}_{\beta'\gamma}(c)) &=& f^{D}_{\beta \beta'}(\pi(f^{C}_{\beta'\gamma}(c)) \\
&=& \pi (f^{C}_{\beta \beta'}(f^{C}_{\beta' \gamma}(c))\\
&=& \pi(f^{C}_{\beta \gamma}(c)) \\
&=& f^{D}_{\beta \gamma}(c),	
\end{eqnarray*}
which verifies (2).  Likewise, to show that (4) holds, we note that if $f(\{\beta,\gamma\}) = 0$, $p^{D}_{\gamma}(d) \neq d$, and $p^{D}_{\beta}(d) \neq d$ for some $\beta \in v$ then, by definition of $p^{D}_{\gamma}$, $d = \tilde{\pi}(c)$ for some $c \in O^{C}$ so $p^{C}_{\beta}(c) = (f_{\beta \gamma}^{C} \circ p^{C}_{\gamma})(c)$ so $p^{D}_{\beta}(d) = (f_{\beta \gamma}^{D} \circ p_{\gamma}^{D})(d)$ as $\tilde{\pi}$ is an embedding, which shows (4) and thus $D \in \mathbb{K}_{v}$.

\textbf{Case 2}:  $\gamma < \alpha_{n-1}$.

Let $i$ be least such that $\gamma < \alpha_{i}$.  The underlying set of $D$ will be $B \cup P_{\gamma}^{C} \cup \{*_{d} : d \in P^{B}_{\alpha_{i}} \setminus \pi(P^{A}_{\alpha_{i}})\}$, where each $*_{d}$ denotes a new formal element and we will define $\tilde{\pi}: C \to D$ to be $\pi \cup \text{id}_{P_{\gamma}^{C}}$.  As in the previous case, we interpret the predicates of $L_{w}$ on $D$ so that they agree with their interpretation on $B$ and interpret the functions of $L_{w}$ on $D$ so that they are the identity on $P^{C}_{\gamma} \cup \{*_{d} : d \in P^{B}_{\alpha_{i}} \setminus \pi(P^{A}_{\alpha_{i}})\} $ and so that, when restricted to $B$, they agree with their interpretation on $B$. We will interpret $P_{\gamma}$ so that 
$$
P^{D}_{\gamma} = P^{C}_{\gamma} \cup \{*_{d} : d \in P^{B}_{\alpha_{i}} \setminus \pi(P^{A}_{\alpha_{i}})\}
$$
The map $\pi$ will dictate how we have to define the ancestors and descendants at level $\gamma$ of the elements in the image of $\pi$, and, for those elements not in the image of $\pi$, we define the interpretations so that $*_{d}$ will be the ancestor at level $\gamma$ of $d \in P^{B}_{\alpha_{i}} \setminus \pi(P^{A}_{\alpha_{i}})$, as in the previous lemma.  For $j < i$, we define $f^{D}_{\alpha_{j}\gamma}$ so that, if $c \in P^{C}_{\gamma}$, $f^{D}_{\alpha_{j}\gamma}(c) = \pi(f^{C}_{\alpha_{j}\gamma}(c))$, and if $d \in P^{B}_{\alpha_{i}} \setminus \pi(P^{A}_{\alpha_{i}})$, then $f^{D}_{\alpha_{j}\gamma}(*_{d}) = f^{B}_{\alpha_{j}\alpha_{i}}(d)$.  This defines $f^{D}_{\alpha_{j}\gamma}$ on $P^{D}_{\gamma}$ and we define $f^{D}_{\alpha_{j}\gamma}$ to be the identity on the complement of $P^{D}_{\gamma}$ in $D$.  Next, we define $f^{D}_{\gamma \alpha_{i}}$ as follows:  if $d = \pi(c) \in \pi(P^{C}_{\alpha_{i}}) \subseteq P^{B}_{\alpha_{i}}$, we put $f_{\gamma \alpha_{i}}^{D}(d) = f^{C}_{\gamma \alpha_{j}}(c)$, and if $e \in P^{B}_{\alpha_{i}} \setminus \pi(P^{C}_{\alpha_{i}})$, then we set $f^{D}_{\gamma \alpha_{i}}(e) = *_{e}$.  This defines $f^{D}_{\gamma \alpha_{i}}$ on $P^{D}_{\alpha_{i}}$ and we define $f^{D}_{\gamma \alpha_{i}}$ to be the identity on the complement of $P^{D}_{\alpha_{i}}$ in $D$.  For $j > i$, we put $f^{D}_{\gamma \alpha_{j}} = f^{D}_{\gamma \alpha_{i}} \circ f^{D}_{\alpha_{i} \alpha_{j}}$.  Then we define $f_{\gamma \gamma} = \text{id}_{D}$.  Lastly, we define $p^{D}_{\gamma}$ to be the identity on all elements in the complement of $\pi(O^{A})$ and if $d = \pi(c)$, we put $p_{\gamma}^{D}(d) = d$ if $p^{C}_{\gamma}(c) = c$ and we put $p_{\gamma}^{D}(d) = p^{C}_{\gamma}(c)$ if $p^{C}_{\gamma}(c) \neq c$.  This completes the construction.  

It follows from the definitions that $\tilde{\pi}$ is an $L_{v}$-embedding, so we must check $D \in \mathbb{K}_{v}$.  Axioms (1) and (3) are clear from the construction.  To show (2), we note that if $\beta < \beta' < \beta''$ and $c \in P^{D}_{\beta''}$, then either $c$ is in the image of $\tilde{\pi}$, in which case it is easy to check that $(f^{D}_{\beta \beta'} \circ f^{D}_{\beta' \beta''})(c) = f^{D}_{\beta \beta''}(c)$ using that (2) is satisfied in $C$ and $\tilde{\pi}$ is an embedding, or $c$ is not in the image of $\pi$, in which case the verification of (2) is identical to the verification of (2) in Case 2 of Lemma \ref{first condition}.  The argument for (4) is identical to the argument for (4) in Case 1.  We conclude that $D \in \mathbb{K}_{v}$, completing the proof.
\end{proof}

\begin{cor}\label{dirlim}
Suppose \(w \subseteq v \subseteq \kappa\) and \(v,w\) are both finite.  Then \(T^{*}_{w} \subseteq T^{*}_{v}\).  
\end{cor}

\begin{proof}
We will show $\mathrm{Flim}(\mathbb{K}_{v})\upharpoonright L_{w} = \mathrm{Flim}(\mathbb{K}_{w})$ by applying Lemma \ref{KPTredux}. Condition (1) in the Lemma is proved in Lemma \ref{first condition} and Condition (2) is proved in Lemma \ref{second condition}.  
\end{proof}

Using Corollary \ref{dirlim}, we may define the theory \(T^{*}_{\kappa,f}\) as the union of the $T^{*}_{w}$ for all finite $w \subset \kappa$ and the resulting theory is consistent.  Because each $T^{*}_{w}$ is complete and eliminates quantifiers, it follows that $T^{*}_{\kappa,f}$ is a complete theory extending $T_{\kappa,f}$ which eliminates quantifiers.  

%

The following lemmas will be useful in analyzing the possible formulas that could appear in the various patterns under consideration.  Recall that, for all $\alpha < \kappa$, we write $\text{dom}(p_{\alpha})$ for the definable set $\{x \in O : p_{\alpha}(x) \neq x\}$, or equivalently $\{x \in O : p_{\alpha}(x) \in P_{\alpha}\}$.

\begin{lem} \label{nice functions on P}
Suppose $w \subseteq \kappa$ is a finite set containing $\beta$ and $\varphi(x)$ is an $L_{w}$-formula with $\varphi(x) \vdash x \in P_{\beta}$.  Then for any $L_{w}$-term $t(x)$, there is $\alpha \leq \beta$ in $w$ such that $\varphi(x) \vdash t(x) =f_{\alpha \beta}(x)$.	
\end{lem}

\begin{proof}
The proof is by induction on terms.  The conclusion holds for the term $x$ since $(\forall x)[f_{\beta \beta}(x) =x]$ is an axiom of $T_{\kappa,f}$.  Now suppose $t(x)$ is a term such that $\varphi(x) \vdash t(x) = f_{\alpha \beta}(x)$ for some $\alpha \leq \beta$ from $w$.  Then because $\varphi(x) \vdash x \in P_{\beta}$, $\varphi(x) \vdash t(x) \in P_{\alpha}$.  It follows that for any $\delta \leq \gamma$ from $w$, $\varphi(x) \vdash p_{\gamma}(t(x)) = t(x)$ and $\varphi(x) \vdash f_{\delta \gamma}(t(x)) = t(x)$ when $\gamma \neq \alpha$.  Additionally, if $\delta \leq \alpha$ is from $w$, then $\varphi(x) \vdash f_{\delta \alpha}(t(x)) = (f_{\delta \alpha} \circ f_{\alpha \beta})(x) = f_{\delta \beta}(x)$, which is of the desired form, completing the induction.  	
\end{proof}

\begin{lem} \label{nice functions on O}
Suppose $w \subseteq \kappa$ is finite and $\varphi(x)$ is a complete $L_{w}$-formula with $\varphi(x) \vdash x \in O$.  Then for any term $t(x)$ of $L_{w}$, we have one of the following:
\begin{enumerate}
\item $\varphi(x) \vdash t(x) =x$.
\item $\varphi(x) \vdash t(x) = (f_{\alpha \beta} \circ p_{\beta})(x)$ for some $\alpha \leq \beta$ from $w$. 
\end{enumerate}
\end{lem}

\begin{proof}
The proof is by induction on terms.  Clearly the conclusion holds for the term $t(x) = x$.  Now suppose we have established the conclusion for the term $t(x)$.  We must prove that it also holds for the terms $p_{\gamma}(t(x))$ and $f_{\delta \gamma}(t(x))$ for $\delta \leq \gamma$ from $w$.  If $\varphi(x) \vdash t(x) = x$, then $\varphi(x) \vdash p_{\gamma}(t(x)) = (f_{\gamma \gamma} \circ p_{\gamma})(x)$, which falls under case (2), and $\varphi(x) \vdash f_{\delta \gamma}(t(x)) = x$, since $\varphi(x) \vdash t(x) \in O$ which is under case (1).  
%

Now suppose $\varphi(x) \vdash t(x) = (f_{\alpha \beta} \circ p_{\beta})(x)$.  Since we already handled terms falling under case (1), we may, by completeness of $\varphi$, assume $\varphi(x) \vdash x \in \text{dom}(p_{\beta})$ and hence $\varphi(x) \vdash t(x) \in P_{\alpha}$. It follows that $\varphi(x) \vdash p_{\gamma}(t(x)) = t(x)$ and $\varphi(x) \vdash f_{\delta\gamma}(t(x)) = t(x)$ when $\gamma \neq \alpha$, which remain under case (2).  Finally, we have $\varphi(x) \vdash f_{\delta \alpha}(t(x)) = (f_{\delta\alpha} \circ f_{\alpha \beta} \circ p_{\beta})(x) = (f_{\delta \beta} \circ p_{\beta})(x)$, which also remains under case (2), completing the induction.  
\end{proof}

\section{Analysis of the invariants}

In this section, we analyze the possible values of the cardinal invariants under consideration in $T^{*}_{\kappa,f}$ for a coloring $f:[\kappa]^{2} \to 2$.  In the first subsection, we show that any $\mathrm{inp}$- and $\mathrm{sct}$-pattern of height $\kappa$ in $T^{*}_{\kappa, f}$ gives rise to one of a particularly uniform and controlled form, which we call \emph{rectified}. In the second subsection, we show $\kappa_{cdt}(T^{*}_{\kappa,f}) = \kappa^{+}$, independent of the choice of $f$.  Then, making heavy use of rectification, we show in the next two subsections that if $\kappa_{\text{sct}}(T^{*}_{\kappa,f})$ or $\kappa_{\text{inp}}(T^{*}_{\kappa,f})$ are equal to $\kappa^{+}$, then this has combinatorial consequences for the coloring $f$.  More precisely, we show in the third subsection that if there is an inp-pattern of height \(\kappa\), we can conclude that \(f\) has a homogeneous set of size \(\kappa\).  In the case that there is an sct-pattern of height \(\kappa\), we cannot quite get a homogeneous set, but one nearly so:  we prove in this case that there is precisely the kind of homogeneity which a strong coloring witnessing $\text{Pr}_{1}(\kappa, \kappa, 2, \aleph_{0})$ explicitly prohibits.  The theory associated to such a coloring, then, gives the desired counterexample. 

For the entirety of this section, we will fix $\kappa$ a regular uncountable cardinal, a coloring $f:[\kappa]^{2} \to 2$, and a monster model \(\mathbb{M} \models T^{*}_{\kappa,f}\).  

\subsection{Rectification}

Recall that, given a set $X$, a family of subsets $\mathcal{B} \subseteq \mathcal{P}(X)$ is called a \emph{$\Delta$-system} (of subsets of $X$) if there is some $r \subseteq X$ such that for all distinct $x,y \in \mathcal{B}$, $x \cap y = r$.  Given a $\Delta$-system, the common intersection of any two distinct sets is called the \emph{root} of the $\Delta$-system.  The following fact gives a condition under which $\Delta$-systems may be shown to exist:

\begin{fact} \cite[Lemma III.2.6]{kunen2014set} \label{delta-system lemma}
Suppose that 	$\lambda$ is a regular uncountable cardinal and $\mathcal{A}$ is a family of finite subsets of $\lambda$ with $|\mathcal{A}| = \lambda$.  Then there is $\mathcal{B} \subseteq \mathcal{A}$ with $|\mathcal{B}| = \lambda$ and which forms a $\Delta$-system.  
\end{fact}

We note that the definitions below are specific to $T^{*}_{\kappa,f}$.  Recall that, given a subset \(w \subseteq \kappa\), we define \(L_{w} = \langle O, P_{\alpha}, f_{\alpha \beta},p_{\alpha} : \alpha \leq \beta, \alpha,\beta \in w\rangle\).

\begin{defn}
Given $X \in \{\mathrm{inp},\mathrm{sct}\}$, we define a \emph{rectified $X$-pattern as follows}:
\begin{enumerate}
\item A \emph{rectified $\mathrm{sct}$-pattern of height $\kappa$} is a triple $(\overline{\varphi},(a_{\eta})_{\eta \in \omega^{<\kappa}},\overline{w})$ satisfying the following:
\begin{enumerate}
\item $(a_{\eta})_{\eta \in \omega^<{\kappa}}$ is an $s$-indiscernible tree of parameters.  
\item $\overline{\varphi}$ is a sequence of formulas $(\varphi_{\alpha}(x;y_{\alpha}) : \alpha < \kappa)$ which, together with the parameters $(a_{\eta})_{\eta \in \omega^{<\kappa}}$ forms an $\mathrm{sct}$-pattern of height $\kappa$.  	
\item $\overline{w} = (w_{\alpha})_{\alpha < \kappa}$ is a $\Delta$-system of finite subsets of $\kappa$ with root $r$ such that every $w_{\alpha}$ has the same cardinality, $\max r < \min(w_{\alpha} \setminus r)$ for all $\alpha < \kappa$, and $\max(w_{\alpha} \setminus r) < \min(w_{\alpha'} \setminus r)$ for all $\alpha < \alpha' < \kappa$.  
\item For all $\alpha < \kappa$, the formula $\varphi_{\alpha}(x;y_{\alpha})$ is in the language $L_{w_{\alpha}}$ and isolates a complete $L_{w_{\alpha}}$-type over $\emptyset$ in the variables $xy_{\alpha}$.  Additionally, for all $\alpha < \kappa$ and $\eta \in \omega^{\alpha}$, the tuple $a_{\eta}$ enumerates an $L_{w_{\alpha}}$-substructure of $\mathbb{M}$.  
\end{enumerate}
\item We define a \emph{rectified $\mathrm{inp}$-pattern of height $\kappa$} to be a quadruple $(\overline{\varphi},\overline{k},(a_{\alpha,i})_{\alpha < \kappa, i < \omega},\overline{w})$ satisfying the following:
\begin{enumerate}
\item $(a_{\alpha ,i})_{\alpha < \kappa, i < \omega}$ is a mutually indiscernible array of parameters.  
\item $\overline{\varphi}$ is a sequence of formulas $(\varphi_{\alpha}(x;y_{\alpha}) : \alpha < \kappa)$ and $\overline{k} = (k_{\alpha})_{\alpha < \kappa}$ is a sequence of natural numbers which, together with the parameters $(a_{\alpha,i})_{\alpha < \kappa, i < \omega}$ form an $\mathrm{inp}$-pattern of height $\kappa$. 
\item $\overline{w} = (w_{\alpha})_{\alpha < \kappa}$ is a $\Delta$-system of finite subsets of $\kappa$ with root $r$ such that every $w_{\alpha}$ has the same cardinality, $\max r < \min(w_{\alpha} \setminus r)$ for all $\alpha < \kappa$, and $\max(w_{\alpha} \setminus r) < \min(w_{\alpha'} \setminus r)$ for all $\alpha < \alpha' < \kappa$.  
\item For all $\alpha < \kappa$, the formula $\varphi_{\alpha}(x;y_{\alpha})$ is in the language $L_{w_{\alpha}}$ and isolates a complete $L_{w_{\alpha}}$-type over $\emptyset$ in the variables $xy_{\alpha}$.  Additionally, for all $\alpha < \kappa$ and $i < \omega$, the tuple $a_{\alpha,i}$ enumerates an $L_{w_{\alpha}}$-substructure of $\mathbb{M}$.  
\end{enumerate}
\item We will refer to $\overline{w}$ in the above definitions as the \emph{associated $\Delta$-system} of the rectified $X$-pattern.  We will consistently denote the root $r = \{\zeta_{i} : i < n\}$ and the sets $v_{\alpha} = w_{\alpha} \setminus r = \{\beta_{\alpha, i} : i < m \}$, where the enumerations are increasing.  
\end{enumerate}
\end{defn}

\begin{lem}\label{tending}
Given \(X \in \{\text{inp},\text{sct}\}\), if there is an \(X\)-pattern of height \(\kappa\) in \(T\), there is a rectified one.  
\end{lem}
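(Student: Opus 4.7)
The plan is to refine an arbitrary $X$-pattern of height $\kappa$ in four successive steps, establishing the conditions in the order $(2)$, $(1)$, $(4)$, $(3)$ and checking at each stage that the previously arranged properties survive.

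To get condition $(2)$, apply Fact \ref{witness} to obtain a witnessing mutually indiscernible array (in the inp case) or $s$-indiscernible tree (in the sct case). For condition $(1)$, observe that each $\varphi_\alpha$ mentions only finitely many symbols of $L_\kappa$ and so lies in $L_{w_\alpha}$ for some finite $w_\alpha \subset \kappa$. Since $\kappa$ is regular uncountable, the $\Delta$-system lemma yields $S \subseteq \kappa$ of size $\kappa$ on which the $w_\alpha$ form a $\Delta$-system with some root $r$. A pigeonhole on the (finite) cardinality $|w_\alpha|$ together with a transfinite recursion of length $\kappa$ arranges that all $|w_\alpha|$ coincide, that $\max r < \min v_\alpha$ for $v_\alpha := w_\alpha \setminus r$, and that the $v_\alpha$ appear in head-tail order; the recursion works because at stage $\xi$ the previously chosen $v$'s are bounded below some $\gamma_\xi < \kappa$ by regularity, and since the $v_\alpha$ are pairwise disjoint finite subsets of $\kappa$ there are $\kappa$-many $\alpha$ with $\min v_\alpha > \gamma_\xi$. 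In the inp case, ``restricting to $S$'' simply discards the unwanted rows; in the sct case, one restricts the tree to levels in $S$ via an order-embedding $\omega^{<\kappa} \to \omega^{<\kappa}$ that sends level $\alpha$ to level $\pi(\alpha) \in S$ and preserves tree operations. Both restrictions preserve the indiscernibility obtained above.

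For condition $(4)$, replace each parameter tuple by its closure under the unary functions of $L_{w_\alpha}$. Uniform local finiteness of $\mathbb{K}_{w_\alpha}$ makes these closures into finite tuples of uniformly bounded size; since the closure is given by a fixed list of term applications performed coordinatewise, both mutual indiscernibility and $s$-indiscernibility persist, and the $\Delta$-system data from step two is untouched.

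For condition $(3)$, exploit that $T^*_{w_\alpha}$ is $\aleph_0$-categorical with quantifier elimination, so there are only finitely many complete $L_{w_\alpha}$-types in the variables $(x, y_\alpha)$, each isolated by a complete $L_{w_\alpha}$-formula. Fix a realization $c$ of a distinguished path through the pattern (the constant column $\{\varphi_\alpha(x; a_{\alpha, 0})\}$ in the inp case, a fixed branch in the sct case), and let $\psi_\alpha(x; y_\alpha)$ isolate the $L_{w_\alpha}$-type of $c$ over the parameter appearing at level $\alpha$ on that path. Then $\psi_\alpha \vdash \varphi_\alpha$, so the $n_\alpha$-inconsistencies of the row (inp) or the pairwise inconsistencies of incomparable nodes (sct) transfer from $\varphi_\alpha$ to $\psi_\alpha$. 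Consistency along every branch also transfers: in the inp case, iterated applications of mutual indiscernibility yield $(a_{\alpha, \eta(\alpha)})_{\alpha < \kappa} \equiv (a_{\alpha, 0})_{\alpha < \kappa}$ for every $\eta \in \omega^\kappa$, so an automorphism of $\mathbb{M}$ sends $c$ to a realization of the new branch; in the sct case, Fact \ref{s-IndiscTreeProp}(1) gives that all branches have the same type and the same conclusion follows. The main obstacle is this final step---ensuring the replacement formulas witness consistency along \emph{every} branch, not merely the distinguished one---and it is precisely where $\aleph_0$-categoricity of the finite reducts combines with the indiscernibility of the witnesses to succeed.
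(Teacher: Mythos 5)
Your proposal is correct and follows essentially the same approach as the paper's proof: choose finite $w_\alpha$ for each formula, apply the $\Delta$-system lemma with pigeonhole and regularity of $\kappa$ for condition (1), invoke Fact \ref{witness} for the indiscernible witnesses of condition (2), close parameter tuples under the $L_{w_\alpha}$-functions for condition (4), and then replace each $\varphi_\alpha$ with a complete $L_{w_\alpha}$-formula isolating the type of a realization of a distinguished path over the corresponding parameter, using automorphisms via the indiscernibility to recover consistency of all paths. The only cosmetic difference is that you arrange the indiscernibility (2) before the $\Delta$-system refinement (1), rather than after, but as you note, passing to a subfamily of rows preserves mutual indiscernibility and the level-restriction map preserves $s$-indiscernibility, so this reordering is harmless.
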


\begin{proof}
Given an \(X\)-pattern with the sequence of formulas \(\overline{\varphi} = (\varphi_{\alpha}(x;y_{\alpha}): \alpha < \kappa)\) one can choose some finite \(w_{\alpha} \subset \kappa\) such that \(\varphi_{\alpha}(x;y_{\alpha})\) is in the language \(L_{w_{\alpha}}\).  Apply the \(\Delta\)-system lemma, Fact \ref{delta-system lemma}, to the collection \((w_{\alpha} : \alpha < \kappa)\) to find some \(I \subseteq \kappa\) with $|I| = \kappa$ such that \(\overline{w} = (w_{\alpha} : \alpha \in I)\) forms a \(\Delta\)-system with root $r$.  By the pigeonhole principle, using that $\kappa$ is uncountable, and the regularity of $\kappa$, we may assume \(|w_{\alpha}| = m\) for all \(\alpha< \kappa\), \(\max r < \min (w_{\alpha} \setminus r)\) for all $\alpha < \kappa$, and if \(\alpha < \alpha'\), \(\max (w_{\alpha} \setminus r) < \min (w_{\alpha'} \setminus r)\).  By renaming, we may assume \(I = \kappa\).

If \(X = \text{inp}\), we may take the parameters witnessing that \((\overline{\varphi},\overline{k},(a_{\alpha,i})_{\alpha <\kappa,i < \omega})\) is an inp-pattern to be a mutually indiscernible array by Lemma \ref{witness}(1).  Moreover, mutual indiscernibility is clearly preserved after replacing each \(a_{\alpha,i}\) by a tuple enumerating the \(L_{w_{\alpha}}\)-substructure generated by $a_{\alpha,i}$ and, by \(\aleph_{0}\)-categoricity of \(T^{*}_{w_{\alpha}}\), this structure is finite.  Let \(b \models \{\varphi_{\alpha}(x;a_{\alpha,0}) : \alpha < \kappa\}\).  Using again the \(\aleph_{0}\)-categoricity of \(T^{*}_{w_{\alpha}}\), replace \(\varphi_{\alpha}(x;y_{\alpha})\) by an \(L_{w_{\alpha}}\)-formula \(\varphi'_{\alpha}(x;y_{\alpha})\) such that \(\varphi_{\alpha}'(x,y_{\alpha})\), viewed as an unpartitioned formula in the variables $xy_{\alpha}$, isolates the type \(\text{tp}_{L_{w_{\alpha}}}(ba_{\alpha,0}/\emptyset)\). By mutual indiscernibility, if \(g: \kappa \to \omega\) is a function, there is \(\sigma \in \text{Aut}(\mathbb{M})\) such that \(\sigma(a_{\alpha,0}) = a_{\alpha, g(\alpha)}\) for all \(\alpha < \kappa\).  Then \(\sigma(b) \models \{\varphi'_{\alpha}(x;a_{\alpha,g(\alpha)}) : \alpha < \kappa\}\) so paths are consistent.  The row-wise inconsistency is clear so if we set \(\overline{\varphi}' =(\varphi'_{\alpha}(x;y_{\alpha}) : \alpha < \kappa)\), we see $(\overline{\varphi}',\overline{k},(a_{\eta,i})_{\alpha < \kappa, i < \omega},\overline{w})$ forms a rectified inp-pattern of height $\kappa$.

If \(X = \text{sct}\), we argue similarly.  We may take the witnessing parameters $(a_{\eta})_{\eta \in \omega^{<\kappa}}$ to be \(s\)-indiscernible, by Lemma \ref{witness}(2).  Likewise, \(s\)-indiscernibility is preserved by replacing each \(a_{\eta}\) by its closure under the functions of \(L_{w_{l(\eta)}}\) and this closure is finite.  Let \(b \models \{\varphi_{\alpha}(x;a_{0^{\alpha}}) : \alpha < \kappa\}\) and replace $\overline{\varphi}$ by $\overline{\varphi}'$ where \(\varphi'_{\alpha}(x;y_{\alpha})\) is an \(L_{w_{\alpha}}\)-formula which, viewed as an unpartitioned formula in the variables $xy_{\alpha}$, isolates \(\mathrm{tp}_{L_{w_{\alpha}}}(ba_{0^{\alpha}}/\emptyset)\).  For all \(\eta \in \omega^{\kappa}\), there is a \(\sigma \in \text{Aut}(\mathbb{M})\) such that \(\sigma(a_{0^{\alpha}}) = a_{\eta | \alpha}\).  Then \(\sigma(b) \models \{\varphi'_{\alpha}(x;a_{\eta | \alpha}) : \alpha < \kappa\}\) so paths are consistent.  Incomparable nodes remain inconsistent, so \((\overline{\varphi}',(a_{\eta})_{\eta \in \omega^{<\kappa}},\overline{w})\) forms a rectified sct-pattern.  
\end{proof}

\begin{rem} \label{same number of variables}
As the replacement of $(\varphi(x;y_{\alpha}) : \alpha < \kappa)$ with a sequence of complete formulas $(\varphi'_{\alpha}(x;y_{\alpha}) : \alpha < \kappa)$ does not change the free variables $x$, if $T$ has an inp- or sct-pattern in $k$ free variables of height $\kappa$, Lemma \ref{tending} produces a rectified inp- or sct-pattern of height $\kappa$ in the same number of free variables.  
\end{rem}

\subsection{Computing \(\kappa_{cdt}\)}

\begin{lem}\label{stable}
The theory \(T_{\kappa,f}^{*}\) is stable.  
\end{lem}

\begin{proof}
Since stability is local, it suffices to show \(T^{*}_{w}\) is stable for all finite \(w \subset \kappa\).  Let \(M \models T_{w}\) be a countable model.  We will count 1-types in \(T^{*}_{w}\) over \(M\) explicitly using quantifier elimination.  Pick some \(p(x) \in S^{1}_{L_{w}}(M)\).  If \(x = m\) is a formula in \(p\) for some \(m \in M\) then this formula obviously isolates \(p\) so there are countably many such possibilities.  So assume \(x \neq m\) is in \(p\) for all \(m \in M\).  

Now we break into cases based upon the predicate contained in $p$.  If \(x \not\in O \wedge \bigwedge_{\alpha \in w} x \not\in P_{\alpha}\) is a formula in \(p\), then \(p\) is completely determined, so there is a unique type in this case.  If \(x \in O\) is a formula in \(p\), then, by quantifier-elimination and Lemma \ref{nice functions on O}, the type is determined after deciding the truth value of $p_{\alpha}(x)= x$ and \((f_{\beta \alpha} \circ p_{\alpha})(x) = m\) for all \(\beta \leq \alpha \in w\) and \(m \in P_{\beta}(M)\).  As \((f_{\beta \alpha} \circ p_{\alpha})(x)\) can be equal to at most 1 element of \(P_{\beta}(M)\) and $w$ is finite, there are countably many possibilties for this case.  Finally, if \(x \in P_{\beta}\) is a formula in \(p\), then, by quantifier-elimination and Lemma \ref{nice functions on P}, the type is determined after deciding the truth value of \(f_{\gamma \beta}(x) = m\) for \(m \in P_{\gamma}(M)\) for all \(\gamma < \beta < \alpha\) from \(w\).  Here again there are only countably many possibilities, by the finiteness of $w$.  Since this covers all possible types, we've shown that \(S^{1}_{L_{w}}(M)\) is countable, so \(T^{*}_{w}\) is stable (in fact, as $M$ is an arbitrary countable model, $\omega$-stable) which implies that $T^{*}_{\kappa,f}$ is stable.  
\end{proof}

\begin{prop} \label{cdtcomputation}
\(\kappa_{\text{cdt}}(T^{*}_{\kappa,f}) = \kappa^{+}\).  
\end{prop}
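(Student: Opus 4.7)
For the lower bound $\kappa_{\text{cdt}}(T^{*}_{\kappa,f}) \geq \kappa^{+}$, I exhibit the canonical cdt-pattern of height $\kappa$ alluded to in the Preview: let $\varphi_{\alpha}(x;y_{\alpha}) := (p_{\alpha}(x) = y_{\alpha})$ for each successor $\alpha < \kappa$ and set $n_{\alpha} = 2$. Build a tree $(a_{\eta})_{\eta \in \omega^{<\kappa}}$ with $a_{\eta} \in P_{l(\eta)}$, $f_{\beta, l(\eta)}(a_{\eta}) = a_{\eta|\beta}$ for each $\beta < l(\eta)$, and distinct sibling nodes; such a tree exists in $\mathbb{M}$ by compactness and amalgamation in the Fra\"iss\'e limits $M_{w}$. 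Path consistency of $\{p_{\alpha}(x) = a_{\eta|\alpha} : \alpha < \kappa,\ \alpha \text{ successor}\}$ for $\eta \in \omega^{\kappa}$ reduces to a finite amalgamation problem in a suitable $M_{w}$ solvable directly from the axioms of $T_{\kappa,f}$; axiom (4) is satisfied precisely because $f_{\alpha\beta}(a_{\eta|\beta}) = a_{\eta|\alpha}$ by construction. Sibling $2$-inconsistency is immediate from the functionality of $p_{\alpha}$.

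For the upper bound, suppose for contradiction a cdt-pattern of height $\kappa^{+}$ exists. Adapting Lemma \ref{tending} to cdt-patterns---invoking Fact \ref{witness} for $s$-indiscernibility, applying the $\Delta$-system lemma to the $w_{\alpha}$ (where $\varphi_{\alpha} \in L_{w_{\alpha}}$, using regularity of $\kappa^{+}$), and pigeonholing on the countably many formulas of each fixed shape in $L_{w_{\alpha}}$---I may rectify the pattern so that: $(w_{\alpha})_{\alpha<\kappa^{+}}$ forms a $\Delta$-system with root $r$, pairwise disjoint tails $v_{\alpha}$ of fixed cardinality, and $\max r < \min v_{\alpha}$ and $\max v_{\alpha} < \min v_{\alpha'}$ for $\alpha < \alpha'$; the tree $(a_{\eta})$ is $s$-indiscernible; each $\varphi_{\alpha}$ is a complete non-algebraic $L_{w_{\alpha}}$-formula; the tuples $a_{\eta}$ are closed under $L_{w_{l(\eta)}}$-functions; and all $\varphi_{\alpha}$ agree under the canonical identification of $L_{w_{\alpha}}$ with $L_{w_{\alpha'}}$ respecting $r$ and the tail ordering. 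Applying Lemma \ref{types} to $\text{tp}_{L_{w_{\alpha}}}(b_{\eta}/a_{\eta|\alpha})$ for a realized path $\eta$ with witness $b_{\eta}$, the common formula is either of form $(1)$ (so $b_{\eta} \in O$) or form $(2)$ (so $b_{\eta} \in P_{\alpha_{i}}$ for a fixed $\alpha_{i} \in r$).

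The engine of the upper bound is a \emph{path-constancy} observation: any positive constraint $t(x) = (y_{\alpha})_{l}$ in $\varphi_{\alpha}$ forces $(a_{\eta|\alpha})_{l} = t(b_{\eta})$ along every path $\eta$ with witness $b_{\eta}$, so $(a_{\eta|\alpha})_{l}$ is constant in $\alpha$; since any two siblings at level $\alpha \geq 2$ share the level-$1$ prefix, they share the value of such a ``constant along path'' component, and hence the constraint does not witness sibling inconsistency. In case $(2)$, the inequality $\max r < \min v_{\alpha}$ together with $\alpha_{i} \leq \max r$ forces every $\gamma \in w_{\alpha}$ with $\gamma < \alpha_{i}$ to lie in $r$; hence $\varphi_{\alpha}$ is an $L_{r}$-formula supported on root parameters, so positive constraints fail to distinguish siblings by the observation, while a purely negative formula defines a cofinite subset of $P_{\alpha_{i}}$ that cannot be finitely inconsistent across siblings---contradicting $n_{\alpha}$-inconsistency at any level $\geq 2$. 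In case $(1)$, the sibling inconsistency at each level $\alpha \geq 2$ must therefore come from a positive constraint whose target component lies in a tail sort $P_{\beta}$ with $\beta \in v_{\alpha}$. The assignment $\alpha \mapsto \beta_{\alpha}$ embeds the $\kappa^{+}$-sized set of successor ordinals in $[2, \kappa^{+})$ into $\bigsqcup_{\alpha} v_{\alpha} \subseteq \kappa$---an absurdity.

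The main obstacle is the path-constancy observation together with its consequence that constraints supported on root sorts cannot witness sibling inconsistency at levels $\geq 2$; combined with the exhaustive classification of complete types from Lemma \ref{types} and the $\Delta$-system structure of the tails, this yields both the cardinality contradiction in case $(1)$ and the ``formula-is-$L_{r}$'' contradiction in case $(2)$.
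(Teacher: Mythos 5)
Your lower bound argument is essentially the paper's: build a tree $(a_{\eta})_{\eta\in\omega^{<\kappa}}$ with $a_{\eta}\in P_{l(\eta)}$ coherent under the $f_{\alpha\beta}$'s, take $\varphi_{\alpha}(x;y)$ to be $p_{\alpha}(x)=y$, and observe that path consistency follows from the coherence (which saturates axiom (4)) while sibling $2$-inconsistency follows from functionality of $p_{\alpha}$.

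For the upper bound you take a genuinely different and much heavier route, and it has real gaps. The paper's upper bound is two lines: quantifier elimination shows each finite reduct $T^{*}_{w}$ is $\omega$-stable, hence the directed union $T^{*}_{\kappa,f}$ is stable, and the general bound $\kappa_{\text{cdt}}(T)\leq |T|^{+}$ for stable $T$ immediately gives $\kappa_{\text{cdt}}(T^{*}_{\kappa,f})\leq\kappa^{+}$. Your argument instead ports the $\Delta$-system/rectification machinery that the paper reserves for the inp- and sct-analyses (Propositions \ref{inpcomputation} and \ref{sctcomputation}) to the cdt setting. That is a natural guess given how the paper handles the other two invariants, but it buys you nothing here and introduces problems. (i) You invoke Lemma \ref{types}, which classifies complete $L_{w_{\alpha}}$-types in a \emph{single} free variable; a cdt-pattern may have $l(x)>1$ and, unlike the inp case (Fact \ref{Artem1var}), there is no general one-variable reduction for cdt. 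The sct analysis handles this via the minimality reduction Lemma \ref{rootandobject}, and you would need to prove an analogue; you don't. (ii) Lemma \ref{types} also presupposes $q(x)\vdash x\in O\cup\bigcup_{i<n}P_{\alpha_{i}}$, and you never rule out the case where $\varphi_{\alpha}$ places (a component of) $x$ in a tail sort $P_{\beta_{\alpha,j}}$ or in no named sort. (iii) Your path-constancy observation as stated is only sound when the term $t$ is $\alpha$-independent, i.e.\ built from $L_{r}$-symbols; for a constraint like $(f_{\alpha_{i}\beta_{\alpha,j}}\circ p_{\beta_{\alpha,j}})(x)=(y_{\alpha})_{l}$ the target lies in a root sort but the term varies with $\alpha$, so the constancy conclusion fails and your case-(1) claim that the offending constraint has a ``target in a tail sort'' is not quite right -- the correct statement is that the constraint must use a tail-level function symbol. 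Finally, a ``purely negative formula'' in case (2) is not cofinite in $P_{\alpha_{i}}$ (forbidding ancestors excludes an infinite set), though the consistency conclusion you want still holds. A side remark: since $\kappa^{+}$-many finite subsets of $\kappa$ cannot have pairwise disjoint nonempty tails, the $\Delta$-system step already forces $v_{\alpha}=\emptyset$, so much of your case-(1) analysis is redundant. The gaps are fixable with additional work, but the stability argument sidesteps all of it.
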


\begin{proof}
First, we will show $\kappa_{\text{cdt}}(T^{*}_{\kappa,f}) \geq \kappa^{+}$.  We will construct a cdt-pattern of height \(\kappa\).  By recursion on \(\alpha < \kappa\), we will construct a tree of tuples \((a_{\eta})_{\eta \in \omega^{<\kappa}}\) so that \(l(\eta) = \beta\) implies \(a_{\eta} \in P_{\beta}\) and if \(\eta \unlhd \nu\) with \(l(\eta) = \beta\) and \(l(\nu) = \gamma\), then \(f_{\beta \gamma}(a_{\nu}) = a_{\eta}\).  For \(\alpha = 0\), choose an arbitrary \(a \in P_{0}\) and let \(a_{\emptyset} = a\).  Now suppose given \((a_{\eta})_{\eta \in \omega^{\leq \alpha}}\).  For each \(\eta \in \omega^{\alpha}\), choose a set \(\{b_{i} : i < \omega\} \subseteq f^{-1}_{\alpha \alpha+1}(a_{\eta})\) with the $b_{i}$ pairwise distinct.  Define \(a_{\eta \frown \langle i \rangle} = b_{i}\).  This gives us \((a_{\eta})_{\eta \in \omega^{\leq \alpha+1}}\) with the desired properties.  Now suppose \(\delta\) is a limit and we've defined \((a_{\eta})_{\eta \in \omega^{\leq \alpha}}\) for all \(\alpha < \delta\).  Given any \(\eta \in \omega^{\delta}\), we may, by saturation, find an element \(b \in \bigcap_{\alpha < \delta} f^{-1}_{\alpha \delta}(a_{\eta | \alpha})\).  Then we can set \(a_{\eta} = b\).  This gives \((a_{\eta})_{\eta \in \omega^{\leq \delta}}\) and completes the construction.  

Given \(\alpha < \kappa\), let \(\varphi_{\alpha}(x;y)\) be the formula \(p_{\alpha}(x) = y\).  For any \(\eta \in \omega^{\kappa}\), \(\{\varphi_{\alpha}(x;a_{\eta | \alpha}) : \alpha < \kappa\}\) is consistent and, for all \(\nu \in \omega^{<\kappa}\), \(\{\varphi_{l(\nu)+1}(x;a_{\nu \frown \langle i \rangle}) : i < \omega \}\) is 2-inconsistent.  We have thus exhibited a cdt-pattern of height \(\kappa\) so \(\kappa_{\text{cdt}}(T^{*}_{\kappa,f}) \geq \kappa^{+}\). 

By Lemma \ref{stable} and Fact \ref{easy inequalities}, we have $\kappa_{\mathrm{cdt}}(T^{*}_{\kappa,f}) \leq \kappa^{+}$, so we have the desired equality.  
\end{proof}

\subsection{Case 1: \(\kappa_{\text{inp}} = \kappa^{+}\)} 

In this subsection, we first show how to produce a homogeneous set of size $\kappa$ for $f$ from an $\mathrm{inp}$-pattern of a very particular form.  Then, using rectification, we observe that every $\mathrm{inp}$-pattern of height $\kappa$ gives rise to one of this particular form.  Together, these will allow us to calculate an upper bound on $\kappa_{\mathrm{inp}}(T^{*}_{\kappa,f})$ when the coloring $f$ is chosen to have no homogeneous set of size $\kappa$.   

%
%
\begin{lem}\label{case2}
Suppose we are given a collection \((\beta_{\alpha, i})_{\alpha < \kappa, i < 2}\) of ordinals smaller than $\kappa$ such that if \(\alpha < \alpha' < \kappa\), then $\beta_{\alpha, 0} \leq \beta_{\alpha,1}$, $\beta_{\alpha',0} \leq \beta_{\alpha',1}$, $\beta_{\alpha,0} \leq \beta_{\alpha',0}$ and $\beta_{\alpha,1} < \beta_{\alpha',1}$.  Suppose that there is a mutually indiscernible array \((c_{\alpha, k})_{\alpha < \kappa, k < \omega}\) such that, with \(\varphi_{\alpha}(x;y_{\alpha})\) defined by \((f_{\beta_{\alpha,0}\beta_{\alpha,1}} \circ p_{\beta_{\alpha,1}})(x) = y_{\alpha}\), \((\varphi_{\alpha}(x;y_{\alpha}) : \alpha < \kappa)\), \((c_{\alpha, k})_{\alpha < \kappa, k < \omega}\) forms an inp-pattern of height \(\kappa\).  Then for all pairs \(\alpha < \alpha'\), \(f(\{\beta_{\alpha,1}, \beta_{\alpha',1}\}) = 1\).  
\end{lem}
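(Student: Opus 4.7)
The plan is to adapt the argument of Lemma \ref{case1}, with the extra step of peeling off the outer function $f_{\beta_{\alpha,0}\beta_{\alpha,1}}$ by using the functorial composition law for the $f$'s (axiom (2)) in combination with axiom (4). Suppose for contradiction that $f(\{\beta_{\alpha,1}, \beta_{\alpha',1}\}) = 0$ for some $\alpha < \alpha'$. For any $k,k' < \omega$, fix a realization $b$ of $\{\varphi_\alpha(x;c_{\alpha,k}), \varphi_{\alpha'}(x;c_{\alpha',k'})\}$, which exists by consistency of paths in the inp-pattern.

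First I would argue that $b \in O$. By analyzing the axioms (much like Lemma \ref{types}), a realization $b$ of $\varphi_\alpha(x; c_{\alpha,k})$ with $b \notin O$ must lie in $P_{\beta_{\alpha,1}}$ or satisfy $b = c_{\alpha,k} \in P_{\beta_{\alpha,0}}$. Since $P_{\beta_{\alpha,1}}$, $P_{\beta_{\alpha',1}}$, $P_{\beta_{\alpha,0}}$, $P_{\beta_{\alpha',0}}$ are pairwise disjoint and the witnessing $c$'s lie in the lower sorts, no element outside $O$ can simultaneously witness both formulas. A similar case check rules out $p_{\beta_{\alpha,1}}(b) = b$ and $p_{\beta_{\alpha',1}}(b) = b$, so we obtain $p_{\beta_{\alpha,1}}(b) \in P_{\beta_{\alpha,1}}$ and $p_{\beta_{\alpha',1}}(b) \in P_{\beta_{\alpha',1}}$, both distinct from $b$.

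Now axiom (4) applied to the pair $(\beta_{\alpha,1}, \beta_{\alpha',1})$ with $f(\{\beta_{\alpha,1}, \beta_{\alpha',1}\}) = 0$ forces
\[
f_{\beta_{\alpha,1}\beta_{\alpha',1}}(p_{\beta_{\alpha',1}}(b)) = p_{\beta_{\alpha,1}}(b).
\]
Applying $f_{\beta_{\alpha,0}\beta_{\alpha,1}}$ to both sides and using the composition axiom, the left side becomes $f_{\beta_{\alpha,0}\beta_{\alpha',1}}(p_{\beta_{\alpha',1}}(b))$, while the right side is $c_{\alpha,k}$ by $\varphi_\alpha$. Factoring $f_{\beta_{\alpha,0}\beta_{\alpha',1}} = f_{\beta_{\alpha,0}\beta_{\alpha',0}} \circ f_{\beta_{\alpha',0}\beta_{\alpha',1}}$ and using $\varphi_{\alpha'}$ to identify $f_{\beta_{\alpha',0}\beta_{\alpha',1}}(p_{\beta_{\alpha',1}}(b)) = c_{\alpha',k'}$, we obtain the rigid identity
\[
c_{\alpha,k} = f_{\beta_{\alpha,0}\beta_{\alpha',0}}(c_{\alpha',k'})
\]
valid for \emph{every} choice of $k, k' < \omega$.

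The final step is to derive a contradiction: by mutual indiscernibility the sequence $(c_{\alpha,k})_{k<\omega}$ is indiscernible, and if all its entries coincided then $\{\varphi_\alpha(x; c_{\alpha,k}) : k<\omega\}$ would be consistent, contradicting the row-inconsistency of the inp-pattern; hence the $c_{\alpha,k}$ are pairwise distinct. Fixing any $k'$, the right-hand side is a fixed element while the left varies with $k$, giving the contradiction. The only real obstacle is the preliminary case analysis pinning down $b \in O$ with both $p$-values nontrivial so that axiom (4) may actually be invoked; once that is in place, the computation with the transition functions is essentially forced.
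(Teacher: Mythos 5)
Your proof is correct and follows essentially the same route as the paper's: assume $f(\{\beta_{\alpha,1},\beta_{\alpha',1}\})=0$, use axiom (4) together with the composition law $f_{\gamma\delta}\circ f_{\delta\epsilon}=f_{\gamma\epsilon}$ to force the rigid identity $c_{\alpha,k}=f_{\beta_{\alpha,0}\beta_{\alpha',0}}(c_{\alpha',k'})$, and then derive a contradiction from row-inconsistency (distinctness of the $c_{\alpha,k}$). You additionally spell out the preliminary case analysis pinning $b$ to $O$ with both $p$-values nontrivial, which the paper leaves implicit in the qualifier "for any $x$ with $p_{\beta_{\alpha,1}}(x)\neq x$ and $p_{\beta_{\alpha',1}}(x)\neq x$."
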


\begin{proof}
If \(\alpha < \alpha'\) and \(f(\{\beta_{\alpha,1}, \beta_{\alpha',1}\}) = 0\), then $p_{\beta_{\alpha,1}}(x) = (f_{\beta_{\alpha,1} \beta_{\alpha',1}} \circ p_{\beta_{\alpha',1}})(x)$ for any \(x\) with \(p_{\beta_{\alpha,1}}(x) \neq x\) and \(p_{\beta_{\alpha',1}}(x) \neq x\), and hence 
\begin{eqnarray*}
(f_{\beta_{\alpha,0}\beta_{\alpha,1}} \circ p_{\beta_{\alpha,1}})(x) &=& (f_{\beta_{\alpha,0}\beta_{\alpha,1}} \circ f_{\beta_{\alpha,1}\beta_{\alpha',1}} \circ p_{\beta_{\alpha',1}})(x) \\
&=& (f_{\beta_{\alpha,0}\beta_{\alpha',1}} \circ p_{\beta_{\alpha',1}})(x) \\
&=& (f_{\beta_{\alpha,0}, \beta_{\alpha',0}} \circ f_{\beta_{\alpha',0}\beta_{\alpha',1}} \circ p_{\beta_{\alpha',1}})(x).
\end{eqnarray*}
Consequently, 
\[
\{(f_{\beta_{\alpha,0}\beta_{\alpha,1}} \circ p_{\beta_{\alpha,1}})(x) = c_{\alpha, k'}, (f_{\beta_{\alpha',0}\beta_{\alpha',1}}\circ p_{\beta_{\alpha',1}})(x) = c_{\alpha',k}\}
\]
is consistent only if \(c_{\alpha, k'} = f_{\beta_{\alpha,0}\beta_{\alpha',0}}(c_{\alpha',k})\).  Because for all $\xi < \kappa$, $(c_{\xi,i})_{i < \omega}$ is indiscernible and, by the definition of an $\mathrm{inp}$-pattern, $\{\varphi_{\xi}(x;c_{\xi,i}) : i < \omega\}$ is inconsistent, we know that $c_{\xi,l} \neq c_{\xi,l'}$ for $l \neq l'$.  Fix any $k<\omega$.  We have shown there is a unique $k'$ such that   
\[
\{(f_{\beta_{\alpha,0}\beta_{\alpha,1}} \circ p_{\beta_{\alpha,1}})(x) = c_{\alpha, k'}, (f_{\beta_{\alpha',0}\beta_{\alpha',1}}\circ p_{\beta_{\alpha',1}})(x) = c_{\alpha',k}\}
\]
is consistent.  By the definition of an $\mathrm{inp}$-pattern, given any function $g: \kappa \to \omega$, 
$$
\{\varphi_{\alpha}(x;c_{\alpha,g(\alpha)}) : \alpha < \kappa\}
$$
is consistent and so, in particular, the set
\[
\{(f_{\beta_{\alpha,0}\beta_{\alpha,1}} \circ p_{\beta_{\alpha,1}})(x) = c_{\alpha, g(\alpha)}, (f_{\beta_{\alpha',0}\beta_{\alpha',1}}\circ p_{\beta_{\alpha',1}})(x) = c_{\alpha',g(\alpha')}\}
\]
is consistent.  Choosing $g(\alpha') = k$ and $g(\alpha) \neq k'$, we obtain a contradiction.  
\end{proof}

%
%

For the remainder of this subsection, we will assume there is an inp-pattern of height $\kappa$ modulo $T$.  By Lemma \ref{tending}, it follows there is a \emph{rectified} inp-pattern of height $\kappa$ and, by \cite[Corollary 2.9]{ChernikovNTP2} and Remark \ref{same number of variables}, we may assume that this is witnessed by a rectified inp-pattern in a single free variable.  Hence, for the rest of this subsesction, we will fix a rectified inp-pattern \((\overline{\varphi},\overline{k},(a_{\alpha,i})_{\alpha < \kappa, i < \omega},\overline{w})$ and we will assume that each $\varphi_{\alpha}(x;y_{\alpha})$ enumerated in $\overline{\varphi}$ has \(l(x) = 1\).  Recall the associated \(\Delta\)-system is denoted \(\overline{w} = (w_{\alpha}: \alpha < \kappa)\) with root \(r = \{\zeta_{i} : i < n\}\) and \(w_{\alpha} \setminus r = v_{\alpha} = \{\beta_{\alpha, j} : j <m\}\), where the enumerations are increasing.

\begin{lem} \label{ino}
For all \(\alpha < \kappa\), \(\varphi_{\alpha}(x;y_{\alpha}) \vdash x \in O\).  
\end{lem}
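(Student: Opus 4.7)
The plan is to suppose for contradiction that $\varphi_{\alpha_0}(x;y_{\alpha_0}) \not\vdash x\in O$ for some $\alpha_0 < \kappa$ and to use path-consistency of the inp-pattern to upgrade this failure to a uniform one across all $\alpha$. Since $\varphi_{\alpha_0}$ is a complete $L_{w_{\alpha_0}}$-formula in one variable, the failure means $\varphi_{\alpha_0} \vdash x \notin O$. Let $b$ realize the constant path $\{\varphi_\alpha(x;a_{\alpha,0}) : \alpha < \kappa\}$; then $b \notin O$, and because $b \models \varphi_\alpha(x; a_{\alpha, 0})$ while each $\varphi_\alpha$ is complete, we obtain $\varphi_\alpha \vdash x \notin O$ for every $\alpha < \kappa$.

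Next I would split by the sort that each $\varphi_\alpha$ forces $x$ to inhabit. If $\varphi_\alpha$ additionally implies $x \notin P_\gamma$ for every $\gamma \in w_\alpha$, then by axioms $(2)$ and $(3)$ every function symbol of $L_{w_\alpha}$ acts as the identity on $x$, so a complete non-algebraic such formula collapses to a conjunction of sort restrictions and inequalities $x \neq (a_{\alpha,k})_l$; the row $\{\varphi_\alpha(x;a_{\alpha,k}) : k < \omega\}$ is then clearly consistent, contradicting $n_\alpha$-inconsistency. Hence $\varphi_\alpha \vdash x \in P_{\gamma_\alpha}$ for some $\gamma_\alpha \in w_\alpha$, and applying this to the realization $b$ together with pairwise disjointness of the $P$-sorts and of the non-root parts $v_\alpha$ forces $\gamma_\alpha \in r$. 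By pigeonhole on the finite set $r$ I pass to a sub-pattern of height $\kappa$ on which $\gamma_\alpha$ is a fixed $\gamma \in r$.

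The hard part is the remaining case $\gamma_\alpha = \gamma \in r$, which I would dispatch by combining Lemma \ref{types}(2) with path-consistency and mutual indiscernibility. By Lemma \ref{types}(2), each $\varphi_\alpha(x;a_{\alpha,k})$ is equivalent to the conjunction of $x \in P_\gamma$, inequalities $x \neq (a_{\alpha,k})_l$, and signed equalities $(f_{\alpha_{i'}\gamma}(x) = c^k_{\alpha_{i'},\alpha})^t$ ranging over $i'$ with $\alpha_{i'} < \gamma$, where every such $\alpha_{i'}$ must lie in $r$ because $\max r < \min v_\alpha$. For the row to be $n_\alpha$-inconsistent at least one signed equality must be positive and have its RHS $c^k_{\alpha_{i'},\alpha}$ genuinely varying with $k$, so a further pigeonhole over the finitely many candidate $i'$ yields a sub-pattern on which a single index $i'$ works for every $\alpha$. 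Path-consistency for the constant path produces $b$ satisfying $f_{\alpha_{i'}\gamma}(b) = c^0_{\alpha_{i'},\alpha}$ for all $\alpha$, so $c^0_{\alpha_{i'},\alpha}$ is a single element $e$ independent of $\alpha$. Fix $\alpha' \neq \alpha$: since $e$ occurs at the level-$\alpha_{i'}$ coordinate of $a_{\alpha',0}$ and the row $(a_{\alpha,k})_k$ is indiscernible over $a_{\alpha',0}$ by mutual indiscernibility, the corresponding coordinate of $a_{\alpha,k}$ must remain equal to $e$ for every $k$, contradicting the variation of $c^k_{\alpha_{i'},\alpha}$. This closes the contradiction and forces $\varphi_\alpha \vdash x \in O$ for every $\alpha$.
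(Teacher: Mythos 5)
Your proof is correct and it diverges from the paper's argument in the key case, though it uses the same basic ingredients. The first half matches the paper's opening reduction: using the $\Delta$-system, a realization $b$ of the constant $0$-path, and disjointness of the sorts, one reduces — after disposing of the easy case in which some level forces $x$ outside every unary predicate of its reduct — to the case $\varphi_\alpha \vdash x \in P_{\gamma}$ for a fixed $\gamma$ in the root $r$. (Your pigeonhole at that point is superfluous: since $b$ can lie in at most one $P$-sort, $\gamma_\alpha$ is automatically constant, and membership in $\bigcap_\alpha w_\alpha = r$ is immediate.) For the remaining case, both arguments apply Lemma~\ref{types}(2) to write the positive conjuncts of $\varphi_\alpha(x;a_{\alpha,k})$ as equalities $f_{\alpha_{i'}\gamma}(x) = c$ indexed by root elements $\alpha_{i'} < \gamma$. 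The paper then defines a cutoff $\gamma^*$ to be the max over $k$ of the first levels along the $0$-path at which the equation with left-hand side $f_{\alpha_k \gamma}$ appears, and shows the row at level $\gamma^*+1$ must be consistent: its positive equalities are already determined by path consistency with the $0$-path up to $\gamma^*$, and the remaining inequalities are freely accumulatable. You instead pigeonhole to fix a single index $i'$ whose positive equality has right-hand side varying in $k$, use path consistency to show $c^0_{\alpha_{i'},\alpha}$ equals a fixed element $e$ independent of $\alpha$, and then observe that since $e$ is also a coordinate of $a_{\alpha',0}$ for $\alpha'\neq\alpha$, mutual indiscernibility of $(a_{\alpha,k})_k$ over $a_{\alpha',0}$ pins the corresponding coordinate of every $a_{\alpha,k}$ to $e$, contradicting variation. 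Your route locates the contradiction at a single varying equality rather than at a freshly manufactured consistent row, exploiting cross-row indiscernibility directly; the paper's route instead exploits the finiteness of $r$ to find a level past which no new positive equalities can arise, putting more weight on bookkeeping along the $0$-path.
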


\begin{proof}
First, note that we may assume that there is a predicate \(Q \in \{O, P_{\zeta_{i}} : i < n\}\) such that $\varphi_{\alpha}(x;y_{\alpha}) \vdash x \in Q$ for all $\alpha < \kappa$. If not, using that the $w_{\alpha}$'s form a $\Delta$-system, that every formula $\varphi_{\alpha}(x;y_{\alpha})$ is complete, and that $\varphi_{\alpha}(x;a_{\alpha,i})$ is consistent with $\varphi_{\beta}(x;a_{\beta,j})$ whenever $\alpha \neq \beta$, there would be some $\alpha < \kappa$ such that $\varphi_{\alpha}(x;y_{\alpha})$ implies that $x$ is not contained in any predicate of $L_{w_{\alpha}}$.  By Lemma \ref{no equalities}(1), we know each $\varphi_{\alpha}(x;a_{\alpha,i})$ is non-algebraic, so, in this case it is easy to check that $\{\varphi_{\alpha}(x;a_{\alpha, i}) : i < \omega\}$ is consistent, contradicting the definition of inp-pattern.  So we must show that  \(\varphi_{\alpha}(x;y_{\alpha}) \vdash P_{\zeta_{i}}\) for some \(i < n\) is impossible.  

Suppose not and fix $i_{*} < n$ so that $\varphi_{\alpha}(x;y_{\alpha}) \vdash x \in P_{\zeta_{i_{*}}}$ for some $\alpha < \kappa$.  Note that it follows that $\varphi_{\alpha}(x;y_{\alpha}) \vdash x \in P_{\zeta_{i_{*}}}$ for \emph{all} $\alpha < \kappa$ as each $\varphi_{\alpha}$ is a complete $L_{w_{\alpha}}$-formula, the predicate $P_{\zeta_{i_{*}}}$ is in every $L_{w_{\alpha}}$, and columns in the $\mathrm{inp}$-pattern are consistent.  Write each tuple in the array \(a_{\alpha,i}\) as \(a_{\alpha,i} = (b_{\alpha, i}, c_{\alpha, i}, d_{\alpha, i}, e_{\alpha, i})\) where the elements of \(b_{\alpha, i}\) are in \(O\), the elements of \(c_{\alpha, i}\) are in predicates indexed by the root \(\bigcup_{i < n} P_{\zeta_{i}}\), the elements of \(d_{\alpha,i}\) are in predicates whose index is in \(\bigcup_{j < m} P_{\beta_{\alpha,j}}\), and the elements of \(e_{\alpha,i}\) are not in any predicate of \(L_{w_{\alpha}}\).  By completeness, quantifier-elimination, as well as Lemmas \ref{no equalities}(1) and \ref{nice functions on P}, each \(\varphi_{\alpha}(x;a_{\alpha,i})\) is equivalent to the conjunction of the following:
\begin{enumerate}
\item \(x \in P_{\zeta_{i_{*}}}\) 
\item \(x \neq (a_{\alpha,i})_{l}\) for all \(l < l(a_{\alpha,i})\)  
\item \((f_{\gamma \zeta_{i_{*}}}(x) = (c_{\alpha,i})_{l})^{t_{\gamma,l}}\) for all \(l < l(c_{\alpha,i})\) and \(\gamma \in w_{\alpha}\) less than \(\zeta_{i_{*}}\) and some \(t_{\gamma,l} \in \{0,1\}\).  
\end{enumerate}
For each \(k < i_{*}\), let \(\gamma_{k}\) be the least ordinal \(<\kappa\) such that \(\varphi_{\gamma_{k}}(x;a_{\gamma_{k},0}) \vdash f_{\alpha_{k}\alpha_{i_{*}}}(x) = c\) for some \(c \in c_{\gamma_{k},0}\) and \(0\) if there is no such.  Let \(\gamma = \max\{\gamma_{k} : k < i_{*}\}\). We claim that \(\{\varphi_{\gamma+1}(x;a_{\gamma+1,j}) : j < \omega\}\) is consistent.    Note that any equality of the form $f_{\zeta_{k}\zeta_{i_{*}}}(x) = c$ implied by $\varphi_{\gamma+1}(x;a_{\gamma+1,j})$ is implied by $\varphi_{\gamma_{k}}(x;a_{\gamma_{k},0})$ by indiscernibility and the fact that, for all \(j < \omega\),
\[
\{\varphi_{\gamma_{k}}(x;a_{\gamma_{k},0}) ,\varphi_{\gamma+1}(x;a_{\gamma+1,j})\}
\]
is consistent.  Additionally, any inequality of the form \(f_{\zeta_{k}\zeta_{i_{*}}}(x) \neq c\) implied by \(\varphi_{\gamma+1}(x;a_{\gamma+1,j})\) is compatible with \(\{\varphi_{\alpha}(x;a_{\alpha,0}) : \alpha \leq \gamma\}\).  Choosing a realization \(b \models \{\varphi_{\alpha}(x;a_{\alpha,0}) : \alpha \leq \gamma\}\) satisfying every inequality of the form \(f_{\zeta_{k}\zeta_{i_*}}(x) \neq c\) implied by the \(\varphi_{\gamma+1}(x;a_{\gamma+1,j})\) yields a realization of \(\{\varphi_{\gamma+1}(x;a_{\gamma+1,j}) : j < \omega\}\), by the description of $\varphi_{\gamma+1}(x;a_{\gamma+1,j})$ as a conjunction given above.  This contradicts the definition of inp-pattern.  
\end{proof}

\begin{prop}\label{inpcomputation}
If $\kappa_{\mathrm{inp}}(T^{*}_{\kappa,f}) = \kappa^{+}$, then there is a subset \(H \subseteq \kappa\) with \(|H| = \kappa\) such that \(f\) is constant on \([H]^{2}\).   
\end{prop}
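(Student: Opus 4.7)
The plan is to reduce the rectified inp-pattern to one of the three special forms handled by Lemmas \ref{case1}, \ref{case2}, and \ref{case3}, each of which produces a subset of $\kappa$ of cardinality $\kappa$ on which $f$ is constantly $1$.

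The first step is to identify, for each $\alpha < \kappa$, the atomic conjunct of $\varphi_\alpha(x;y_\alpha)$ responsible for the row-wise inconsistency. Since the previous lemma gives $\varphi_\alpha \vdash x \in O$, Lemma \ref{types}(1) decomposes $\varphi_\alpha$ as a conjunction of signed atomic formulas built from the terms $p_\gamma(x)$ and $(f_{\gamma\delta} \circ p_\delta)(x)$, together with inequalities $x \neq (y_\alpha)_l$ and conjuncts that do not depend on parameters. I claim the $n_\alpha$-inconsistency of the row $\{\varphi_\alpha(x;a_{\alpha,i}) : i < \omega\}$ must trace to a positive equation $t_\alpha(x) = (y_\alpha)_{l_\alpha}$ whose parameter column $((a_{\alpha,i})_{l_\alpha})_{i < \omega}$ is an indiscernible sequence of pairwise distinct elements. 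Indeed, the sorts $O$ and each $P_\gamma$ are infinite in $T^*_{\kappa,f}$, so purely negative conjuncts cannot generate inconsistency; and a ``diagonal'' collision, in which a positive conjunct in one row meets a negative conjunct in another, is excluded by mutual indiscernibility---such a collision would force two parameter columns to be simultaneously constant in $i$ and to agree diagonally, contradicting the consistency of each individual row.

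Next, a pigeonhole argument pins down the form of $t_\alpha$. If every index occurring in $t_\alpha$ lay in the root $r$, then $t_\alpha$ would be a fixed term $t$ independent of $\alpha$; the constant path $\eta \equiv 0$ would then force $t(x) = (a_{\alpha,0})_{l_\alpha}$ for every $\alpha$, making the $l_\alpha$-column constant in $\alpha$---and then mutual indiscernibility would also make it constant in $i$, contradicting the non-constancy of the column. Hence $t_\alpha$ must involve at least one index from $v_\alpha$, and combined with the $\Delta$-system condition $\max r < \min v_\alpha$, the term falls into exactly one of three shapes:
\begin{enumerate}
\item[(A)] $t_\alpha(x) = p_{\beta_{\alpha,j_0}}(x)$ for some $j_0 < m$;
\item[(B)] $t_\alpha(x) = (f_{\alpha_{k_0}\beta_{\alpha,j_0}} \circ p_{\beta_{\alpha,j_0}})(x)$ for some $k_0 < n$ and $j_0 < m$;
\item[(C)] $t_\alpha(x) = (f_{\beta_{\alpha,j_0}\beta_{\alpha,j_1}} \circ p_{\beta_{\alpha,j_1}})(x)$ for some $j_0 < j_1 < m$.
\end{enumerate}
Since $m$ and $n$ are finite and $\kappa$ is regular, I pass to a sub-pattern of size $\kappa$ on which the same case and the same indices $j_0$, $j_1$, $k_0$ apply for every $\alpha$.

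Finally, setting $\psi_\alpha(x;z) := (t_\alpha(x) = z)$ and $c_{\alpha,i} := (a_{\alpha,i})_{l_\alpha}$, the sequence $(\psi_\alpha(x;z) : \alpha < \kappa)$ with parameters $(c_{\alpha,i})$ is itself an inp-pattern: path consistency is inherited since $\varphi_\alpha(x;a_{\alpha,i}) \vdash \psi_\alpha(x;c_{\alpha,i})$, row-wise $2$-inconsistency follows from the pairwise distinctness of the $c_{\alpha,i}$, and mutual indiscernibility of $(c_{\alpha,i})$ is inherited on sub-tuples. This sub-pattern now fits the hypothesis of Lemma \ref{case1}, Lemma \ref{case3}, or Lemma \ref{case2} in cases (A), (B), (C) respectively---the $\Delta$-system ordering supplies the required ordering of the $\beta_{\alpha,j}$'s. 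The chosen lemma yields a strictly increasing sequence $(\delta_\alpha : \alpha < \kappa)$ of ordinals below $\kappa$ (namely $\delta_\alpha = \beta_{\alpha,j_0}$ in (A) and (B), $\delta_\alpha = \beta_{\alpha,j_1}$ in (C)) with $f(\{\delta_\alpha, \delta_{\alpha'}\}) = 1$ for every $\alpha < \alpha'$, so $H := \{\delta_\alpha : \alpha < \kappa\}$ is the desired homogeneous set. The main obstacle in this argument is the first step: carefully verifying that the row-wise inconsistency must be traceable to a single positive equation with a varying parameter column, rather than to a more subtle interaction of positive and negative conjuncts across rows; mutual indiscernibility of the array is the essential tool ruling out the subtler scenarios.
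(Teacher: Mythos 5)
Your proposal is correct and takes essentially the same route as the paper: use Lemma \ref{types}(1) together with mutual indiscernibility to trace the row-inconsistency to a positive equation with a pairwise-distinct parameter column, show (via path consistency of the all-zeros branch) that this equation cannot involve only root indices, obtain the same three canonical shapes by the $\Delta$-system structure and pigeonhole, and then invoke Lemmas \ref{case1}, \ref{case2}, \ref{case3}. The paper packages the root-only exclusion slightly differently (it fixes a threshold ordinal $\gamma$ below which all root-indexed equalities are already pinned down and argues for $\alpha>\gamma$), and it relies implicitly on non-algebraicity of the $\varphi_\alpha$ to guarantee that a row with only constant positive conjuncts and varying inequations is realizable, but these are presentational rather than substantive differences from what you wrote.
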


\begin{proof}
Recall that the hypothesis $\kappa_{\mathrm{inp}}(T^{*}_{\kappa,f}) = \kappa^{+}$ allowed us to fix a rectified inp-pattern \((\overline{\varphi},\overline{k},(a_{\alpha,i})_{\alpha < \kappa, i < \omega},\overline{w})$ with the property that each $\varphi_{\alpha}(x;y_{\alpha})$ enumerated in $\overline{\varphi}$ has \(l(x) = 1\).

By completeness and Lemma \ref{ino}, we know that, for each \(\alpha < \kappa\), \(\varphi_{\alpha}(x;y)\vdash x \in O\).  Then by quantifier-elimination, completeness, and Lemmas \ref{no equalities}(2) and \ref{nice functions on O}, for each $\alpha < \kappa$, $\varphi_{\alpha}(x;a_{\alpha,0})$ is equivalent to the conjunction of the following: 
\begin{enumerate}
\item \(x \in O\)
\item \(x \neq (a_{\alpha,0})_{l}\) for all \(l < l(a_{\alpha,0})\) 
\item \((p_{\gamma}(x) = x)^{t^{0}_{\gamma}}\) for $\gamma \in w_{\alpha}$ and some $t^{0}_{\gamma} \in \{0,1\}$.
\item The values of the \(p_{\gamma}\) and how they descend in the tree:
\begin{enumerate}
\item $((f_{\delta \gamma} \circ p_{\gamma})(x) = (a_{\alpha,0})_{l})^{t^{1}_{l,\delta,\gamma}}$ for $l < l(a_{\alpha,0})$, $\delta \leq \gamma$ in $w_{\alpha}$, and some $t^{1}_{l,\delta,\gamma} \in \{0,1\}$. 
\item \(((f_{\delta \gamma} \circ p_{\gamma})(x) = (f_{\delta \gamma'} \circ p_{\gamma'})(x))^{t^{2}_{\delta,\gamma,\gamma'}}\) for \(\delta, \gamma, \gamma' \in w_{\alpha}\) with \(\delta \leq \gamma < \gamma'\), for some $t^{2}_{\delta,\gamma,\gamma'} \in \{0,1\}$.  
\end{enumerate}
\end{enumerate}


\textbf{Claim:  }Given $\alpha < \kappa$, there are $\epsilon_{\alpha} \leq \epsilon'_{\alpha} \in w_{\alpha}$ and pairwise distinct $c_{\alpha,k} \in a_{\alpha,k}$ such that, for all $k < \omega$, $\varphi_{\alpha}(x;a_{\alpha,k}) \vdash (f_{\epsilon_{\alpha} \epsilon_{\alpha}'} \circ p_{\epsilon'_{\alpha}})(x) = c_{\alpha,k}$.  

\emph{Proof of claim:}  Suppose not.  Then, by the description of $\varphi_{\alpha}(x;a_{\alpha,k})$ given above, the following set of formulas 
\[
\{\varphi_{\alpha}(x;a_{\alpha,k}) : k < \omega\}
\]
is equivalent to a finite number of equations common to each instance \(\varphi_{\alpha}(x;a_{\alpha,k})\) and an infinite collection of inequations.  Then, it is easy to see then that \(\{\varphi_{\alpha}(x;a_{\alpha,k}) : k < \omega\}\) is consistent, contradicting the definition of an inp-pattern.  This proves the claim.

Note that, by the pigeonhole principle, we may assume that either (i) $\epsilon_{\alpha}, \epsilon_{\alpha}' \in r$ for all $\alpha < \kappa$, (ii) $\epsilon_{\alpha} \in r$, $\epsilon'_{\alpha} \in v_{\alpha}$ for all $\alpha < \kappa$, or (iii) $\epsilon_{\alpha},\epsilon'_{\alpha} \in v_{\alpha}$ for all $\alpha < \kappa$.  

Case (i) is impossible:  as the root \(r = \{\zeta_{i} : i < n\}\) is finite and the all 0's path is consistent, we can find an ordinal \(\gamma < \kappa\) such that for all \(\alpha < \kappa\), if there is a \(c \in a_{\alpha,0}\) such that \(\varphi_{\alpha}(x;a_{\alpha,0}) \vdash (f_{\zeta_{i}\zeta_{i'}} \circ p_{\zeta_{i'}})(x) = c\) for some \(i \leq i' < n\), then there is some \(\alpha' < \gamma\) such that \(\varphi_{\alpha'}(x;a_{\alpha',0}) \vdash (f_{\zeta_{i}\zeta_{i'}} \circ p_{\zeta_{i'}})(x) = c\).  Hence, by indiscernibility, the equality $(f_{\epsilon_{\gamma} \epsilon_{\gamma}'} \circ p_{\epsilon'_{\gamma}})(x) = c_{\gamma,k}$ implied by $\varphi_{\gamma}(x;a_{\gamma,k})$ must also be implied by $\varphi_{\alpha}(x;a_{\alpha,0})$ for some $\alpha < \gamma$.  Since $\{\varphi_{\alpha}(x;a_{\alpha,0}), \varphi_{\gamma}(x;a_{\gamma,k})\}$ is consistent for all $k < \omega$, this is impossible because the tuples in $(c_{\alpha,k})_{k < \omega}$ are pairwise distinct.  

Now we consider cases (ii) and (iii). Again by the pigeonhole principle, we may assume that if we are in case (ii), then $\epsilon_{\alpha}$ is constant for all $\alpha$.  Then by rectification, we know that, in either case (ii) or (iii), when $\alpha < \alpha'$, $\epsilon_{\alpha} \leq \epsilon_{\alpha'}$ and $\epsilon'_{\alpha} < \epsilon'_{\alpha'}$.  Because for all $\alpha < \kappa$, the $c_{\alpha,k}$ are pairwise distinct and $k$ varies, the set of formulas
$$
\{(f_{\epsilon_{\alpha}\epsilon'_{\alpha}} \circ p_{\epsilon'_{\alpha}})(x) = c_{\alpha,k} : k < \omega\}
$$
is $2$-inconsistent.  Moreover, if $g : \kappa \to \omega$ is a function, the partial type 
$$
\{ (f_{\epsilon_{\alpha}\epsilon_{\alpha'}} \circ p_{\epsilon'_{\alpha}})(x) = c_{\alpha,g(\alpha)} : \alpha < \kappa\}
$$
is implied by $\{\varphi_{\alpha}(x;a_{\alpha,g(\alpha)}) : \alpha < \kappa\}$ and is therefore consistent.  It follows that $((f_{\epsilon_{\alpha}\epsilon'_{\alpha}} \circ p_{\epsilon'_{\alpha}})(x) = y_{\alpha})_{\alpha < \kappa}, (c_{\alpha,k})_{\alpha < \kappa, k < \omega}$ is an inp-pattern with $k_{\alpha} = 2$ for all $\alpha < \kappa$.  By Lemma \ref{case2}, $f(\{\epsilon'_{\alpha},\epsilon'_{\alpha'}\}) = 1$ for all $\alpha < \alpha'$.  Therefore $H = \{\epsilon'_{\alpha} : \alpha < \kappa\}$ is a homogeneous set for $f$.  
\end{proof}

\subsection{Case 2:  \(\kappa_{\text{sct}} = \kappa^{+}\)}


In this subsection, we show that if $\kappa_{sct}(T^{*}_{\kappa,f}) = \kappa^{+}$ then $f$ satisfies a homogeneity property inconsistent with $f$ being a strong coloring.  In particular, we will show that if this homogeneity property fails, then for any putative sct-pattern of height $\kappa$, there are two incomparable elements in $\omega^{<\kappa}$ which index compatible formulas, contradicting the inconsistency condition in the definition of an sct-pattern.  This step is accomplished by relating consistency of the relevant formulas to an amalgamation problem in finite structures.  The following lemma describes the relevant amalgamation problem:

\begin{lem}\label{consistency}
Suppose we are given the following:
\begin{itemize} 
\item Finite sets \(w, w' \subset \kappa\) with \(w \cap w' = v\) such that for all \(\alpha \in v\), \(\beta \in w \setminus v\), \(\gamma \in w' \setminus v\), we have \(\alpha < \beta < \gamma\) and \(f(\{\beta, \gamma\}) = 1\).
\item Structures \(A \in \mathbb{K}_{w \cup w'}\), \(B = \langle d,A \rangle^{B}_{L_{w}} \in \mathbb{K}_{w}\), \(C = \langle e, A \rangle^{C}_{L_{w'}} \in \mathbb{K}_{w'}\) satisfying the following:
\begin{enumerate}
\item The tuples $d,e$ are contained in $O \cup \bigcup_{\alpha\in v} P_{\alpha}$.
\item 	The map sending \(d \mapsto e\) induces an isomorphism of $L_{v}$-structures over \(A\) between $B = \langle d,A \rangle^{B}_{L_{v}}$ and $C = \langle e, A \rangle_{L_{v}}^{C}$.
\end{enumerate}
\end{itemize}
Then there is \(D = \langle f,A \rangle^{D}_{L_{w \cup w'}} \in \mathbb{K}_{w \cup w'}\) extending \(A\) such that \(l(f)= l(d) = l(e)\) and \(\langle f, A \rangle^{D}_{L_{w}} \cong B\) over \(A\) and \(\langle f,A \rangle^{D}_{L_{w'}} \cong C\) over \(A\) via the isomorphisms over \(A\) sending \(f \mapsto d\) and \(f \mapsto e\), respectively.  
\end{lem}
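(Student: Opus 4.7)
The plan is to construct $D$ as a pushout of $B$ and $C$ over their common $L_v$-substructure. Let $E^B = \langle d, A\rangle_{L_v}^B \subseteq B$ and $E^C = \langle e, A\rangle_{L_v}^C \subseteq C$; by hypothesis these are isomorphic over $A$ via $d \mapsto e$.  I identify them to form $E$ inside a common set, with identified element $f$, take the disjoint union of the remaining elements of $B$ and $C$, and finally adjoin fresh ``shadow'' elements as described below to account for the new cross-level functions.

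I interpret the predicates and function symbols as follows.  The predicates $O$ and $P_\delta$ for $\delta \in v$ are inherited coherently from both $B$ and $C$; for $\beta \in w \setminus v$, $P_\beta$ is inherited from $B$ and empty on $C \setminus E^C$, with the symmetric convention for $\gamma \in w' \setminus v$.  Each function from $L_w$ agrees with $B$ on $B$ and is extended as the identity on $C \setminus E^C$ (legitimate since those elements lie outside every $P_\beta$ with $\beta \in w\setminus v$), and symmetrically for $L_{w'}$.  The only genuinely new data are the cross-level functions $f_{\alpha\beta}$ with $\alpha \in w \setminus v$ and $\beta \in w'\setminus v$.  On elements of $A$ I use the interpretation provided by $A \in \mathbb{K}_{w\cup w'}$.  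For $x \in P_\beta(C) \setminus A$, if some $L_{w'}$-projection $f^C_{\gamma\beta}(x)$ with $\gamma \in w'\setminus v$ lies in $A$, define $f_{\alpha\beta}(x)$ by composing through $A$ (well-defined by axiom $(2)$ in $A$); otherwise introduce a fresh element $*_{[x],\alpha} \in P_\alpha$, one per $L_{w'}$-tree equivalence class of $x$, and extend the compositional structure on these new elements so that axiom $(2)$ is respected.

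The verifications of axioms $(1)$, $(2)$, $(3)$ are routine from the construction.  The critical case is axiom $(4)$.  For pairs $(\eta,\zeta)$ entirely in $w$, axiom $(4)$ holds on $B$ by inheritance, and for objects $z$ in $C \setminus A$ the identity extension of $p_\eta$ and $p_\zeta$ makes $p_?(z)=z \notin P_?$, so the hypothesis of axiom $(4)$ fails vacuously; the case of pairs in $w'$ is symmetric.  Pairs $(\eta,\zeta)$ with $\eta \in v$ fall under either the $L_w$ or $L_{w'}$ case already handled.  Crucially, for $(\eta,\zeta)$ with $\eta \in w\setminus v$ and $\zeta \in w'\setminus v$, the hypothesis $f(\{\eta,\zeta\})=1$ renders axiom $(4)$ vacuous.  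Finally, by construction $D = \langle f, A\rangle_{L_{w\cup w'}}^D$, with $\langle f, A\rangle_{L_w}^D \cong B$ over $A$ via $f \mapsto d$ and $\langle f, A\rangle_{L_{w'}}^D \cong C$ over $A$ via $f \mapsto e$, as required.

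The main obstacle is the coherent definition of the cross-level functions while preserving axiom $(2)$; this is dealt with by the ``shadow'' path-equivalence-class bookkeeping, which ensures the composition $f_{\alpha_1\alpha_2}\circ f_{\alpha_2\gamma}$ always matches $f_{\alpha_1\gamma}$ even when one of the two factors passes through a new element.  The hypothesis $f(\{\beta,\gamma\})=1$ for all $\beta\in w\setminus v$, $\gamma\in w'\setminus v$ is used exactly once, but decisively, to prevent axiom $(4)$ from forcing identifications across the amalgamation boundary that would spoil the construction.
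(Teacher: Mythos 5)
Your proposal is correct and follows essentially the same route as the paper: amalgamate $B$ and $C$ over the common $L_v$-part, adjoin fresh ``shadow'' elements at levels in $w\setminus v$ to serve as targets for the new cross-level functions on elements of $C$ not traceable into $A$, and observe that the hypothesis $f(\{\beta,\gamma\})=1$ for $\beta\in w\setminus v$, $\gamma\in w'\setminus v$ makes axiom (4) vacuous across the amalgamation boundary. One small imprecision worth flagging: the clause about objects in $C\setminus A$ being handled by the identity extension is moot, since $O(C)=O(e)\cup O(A)\subseteq\langle e,A\rangle_{L_v}$, so every object of $C$ is already identified with an object of $B$ and axiom (4) for pairs in $w$ is covered purely by inheritance from $B$.
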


\begin{proof}
Let \(f\) be a tuple of formal elements with \(l(f) = l(d)\)(\(=l(e)\)) with \(L_{w}\) and \(L_{w'}\) interpreted so that \(\langle f,A \rangle_{L_{w}}\) extends \(A\) and is $L_{w}$-isomorphic over \(A\) to \(B\), so that \(\langle f,A \rangle_{L_{w'}}\) extends \(A\) and is $L_{w'}$-isomorphic over \(A\) to \(C\), and so that $\langle f,A \rangle_{L_{w}}$ and $\langle f,A \rangle_{L_{w'}}$ are disjoint over $A \cup \{f\}$.  Let $\gamma$ be the least element of $w' \setminus v$ and define \(D\) to have underlying set
\[
\langle f,A \rangle_{L_{w}} \cup \langle f,A \rangle_{L_{w'}} \cup \{*_{\alpha,c} : \alpha \in w\setminus v, c \in P_{\gamma}^{\langle f,A \rangle_{L_{w'}}} \setminus P_{\gamma}^{A} \}.
\]
We must give \(D\) an \(L_{w \cup w'}\)-structure.  The main task is to give elements at the levels of the tree indexed by $\alpha \in w' \setminus v$ ancestors at the levels of $w \setminus v$ and the new formal elements $*_{\alpha,c}$ will play this role.   

Interpret the predicates on $D$ by setting $O^{D} = O^{\langle f,A \rangle_{L_{w}}	} = O^{\langle f,A \rangle_{L_{w'}}}$ and, additionally, 
$$
P^{D}_{\alpha} = \left\{
\begin{matrix}
P_{\alpha}^{\langle f,A \rangle_{L_{w'}}} & \text{ if } \alpha \in w' \setminus v \\
P_{\alpha}^{\langle f,A \rangle_{L_{w}}} \cup \{*_{\alpha,c}: c \in P_{\gamma}^{\langle f,A \rangle_{L_{w'}}} \setminus P_{\gamma}^{A}\} & \text{ if } \alpha \in w \setminus v \\
P_{\alpha}^{\langle f, A \rangle_{L_{w}}} \cup P^{\langle f,A \rangle_{L_{w'}}}_{\alpha} & \text{ if } \alpha \in v.
 \end{matrix}
\right.
$$

For each of the function symbols $f^{D}_{\alpha \beta}$, we are forced to interpret $f^{D}_{\alpha \beta}$ to be the identity on the complement of $P_{\beta}^{D}$ in $D$, so it suffices to specify the interpretation on $P^{D}_{\beta}$. Given \(\alpha \in w\setminus v\) and $c \in P_{\gamma}^{\langle f,A \rangle_{L_{w'}}} \setminus P_{\gamma}^{A}$, interpret \(f^{D}_{\alpha \gamma}(c) = {*}_{\alpha,c}\) and for any \(\beta \in w'\setminus v\), define \(f_{\alpha \beta}^{D} = f_{\alpha \gamma}^{D} \circ f_{\gamma \beta}^{D}\) on \(P_{\beta}^{D}\).  If \(\alpha \in w\setminus v\) and \(\xi \in v\), interpret \(f^{D}_{\xi \alpha}\) so that $f^{D}_{\xi \alpha}|_{P^{\langle f,A\rangle_{L_{w}}}_{\alpha}} = f^{\langle f,A \rangle_{L_{w}}}_{\xi \alpha}|_{P^{\langle f,A\rangle_{L_{w}}}_{\alpha}}$ and \(f^{D}_{\xi \alpha}(*_{\alpha,c}) = f^{D}_{\xi \gamma}(c)\).  If $\alpha < \beta$ are both from $w \setminus v$, we likewise define $f^{D}_{\alpha \beta}$ so that $f^{D}_{\alpha \beta}|_{P^{\langle f,A \rangle_{L_{w}}}_{\beta}} = f_{\alpha \beta}^{\langle f, A \rangle_{L_{w}}}$ and $f^{D}_{\alpha \beta}(*_{\beta,c}) = *_{\alpha,c}$.  

It remains to define the interpretation of $f_{\alpha \beta}^{D}$ when $\alpha <\beta$ are from $(w \cup w')$ and $\alpha,\beta \notin w \setminus v$.  If $\beta \in w'$, then we can only set $f^{D}_{\alpha \beta}|_{P^{D}_{\beta}} = f^{\langle f,A \rangle_{L_{w'}}}_{\alpha \beta}|_{P^{D}_{\beta}}$, since $P^{D}_{\beta} = P_{\beta}^{\langle f,A \rangle_{L_{w'}}}$.  If $\beta \in v$, then we set $f^{D}_{\alpha \beta}|_{P^{D}_{\beta}} = f^{\langle f, A \rangle_{L_{w}}}_{\alpha \beta}|_{P^{\langle f, A \rangle_{L_{w}}}_{\beta}} \cup f^{\langle f, A \rangle_{L_{w'}}}_{\alpha \beta}|_{P^{\langle f, A \rangle_{L_{w'}}}_{\beta}}$

Finally, interpret each function of the form \(p_{\beta}\) for \(\beta \in w\) to restrict to $p_{\beta}^{\langle f,A \rangle_{L_{w}}}$ and to be the identity on the complemement of $\langle f,A \rangle_{L_{w}}$ and likewise for $\beta \in w'$ (note that these definitions agree for $\alpha \in w \cap w' = v$).  This completes the definition of the \(L_{w \cup w'}\)-structure on \(D\).  It is clear from construction that $D$ is an $L_{w \cup w'}$-extension of $A$, an $L_{w}$-extension of $\langle f,A \rangle_{L_{w}}$, and an $L_{w'}$-extension of $\langle f,A \rangle_{L_{w'}}$.  

Now we must check that \(D \in \mathbb{K}_{w \cup w'}\).  It is easy to check that axioms \((1)-(3)\) are satisfied in \(D\).  As \(f(\{\alpha, \beta\}) = 1\) for all \(\alpha \in w \setminus v, \beta \in w' \setminus v\), the only possible counterexample to axiom (4) can occur when \(\xi \in v\), \(\beta \in (w \cup w') \setminus v\) and \(f(\{\xi, \beta\})=0\).  As the formal elements \(*_{\alpha, c}\) are not in the image of \(O\) under the \(p_{\alpha}\), it follows that a counterexample to axiom (4) must come from a counter-example either in \(B\) or \(C\), which is impossible.  So \(D \in \mathbb{K}_{w \cup w'}\), which completes the proof.  
\end{proof}

\begin{lem}\label{rootandobject}
Suppose \(((\varphi_{\alpha}(x;y_{\alpha}))_{\alpha < \kappa},(a_{\eta})_{\eta \in \omega^{<\kappa}},\overline{w})\) is a rectified sct-pattern such that \(l(x)\) is minimal among sct-patterns of height \(\kappa\).  Then for all \(\alpha < \kappa\), \(\varphi_{\alpha}(x;y_{\alpha}) \vdash (x)_{l} \in O \cup \bigcup_{i < n} P_{\zeta_{i}}\) for all \(l < l(x)\), that is, every formula in the pattern implies that every variable $(x)_{l}$ is in $O$ or a predicate indexed by the root of the associated $\Delta$-system.  
\end{lem}

\begin{proof}
Suppose not.  First, consider the case that for some \(l < l(x)\) and all \(\alpha < \kappa\), $\varphi_{\alpha}(x;y_{\alpha}) \vdash (x)_{l} \not\in O \cup \bigcup_{i < n} P_{\zeta_{i}} \cup \bigcup_{j < m} P_{\beta_{\alpha,j}}$, then the only relations that \(\varphi_{\alpha}(x;y_{\alpha})\) can assert between \((x)_{l}\) and the elements of \(y_{\alpha}\) and the other elements of \(x\) are equalities and inequalities.  By Lemma \ref{no equalities}(2), we know that $\varphi_{\alpha}(x;y_{\alpha})$ proves no equalities between elements of $x$ and the element of $y_{\alpha}$ so it can only prove inequalties between $(x)_{l}$ and $y_{\alpha}$, but it is easy to see that this allows us to find an sct-pattern in fewer variables, contradicting minimality (or if $l(x) = 1$ the definition of an sct-pattern).  

Secondly, consider the case that there is some \(\alpha < \kappa\) and \(j < m\) such that $\varphi_{\alpha}(x;y_{\alpha}) \vdash (x)_{l} \in P_{\beta_{\alpha, j}}$ and therefore, for all \(\alpha' \neq \alpha\), \(\varphi_{\alpha'}(x;y_{\alpha'})\) implies that \((x)_{l}\) is not in any of the unary predicates of \(L_{w_{\alpha'}}\), as \(\beta_{\alpha,j}\) is outside the root of the \(\Delta\)-system.  So restricting the given pattern to the formulas \((\varphi_{\alpha'}(x;y_{\alpha'}) : \alpha' < \kappa, \alpha' \neq \alpha)\) yields a rectified sct-pattern of height $\kappa$ which falls into the first case considered, a contradiction.  As these are the only cases, we conclude.  
\end{proof}

\begin{prop}\label{sctcomputation}
If $\kappa_{\mathrm{sct}}(T^{*}_{\kappa,f}) = \kappa^{+}$, then there is \(\gamma\) such that for any \(\alpha,\alpha'\) with \(\gamma <\alpha < \alpha'<\kappa\) there is \(\xi \in v_{\alpha}, \zeta \in v_{\alpha'}\) such that \(f(\{\xi, \zeta\}) = 0\).  
\end{prop}

\begin{proof}
Suppose not.   Recall that by Lemma \ref{tending} and Remark \ref{same number of variables}, if there is an sct-pattern of height $\kappa$ in $k$-free variables, there is a sct-pattern in $k$ free variables which is also rectified.  It follows we may fix a rectified sct-pattern \(((\varphi_{\alpha}(x;y_{\alpha}))_{\alpha < \kappa},(a_{\eta})_{\eta \in \omega^{<\kappa}},\overline{w})\) such that \(l(x)\) is minimal among sct-patterns of height \(\kappa\).  By Lemma \ref{rootandobject}, we know that up to a relabeling of the variables, there is a \(k \leq l(x)\) such that, for all $l < k$, \(\varphi_{\alpha}(x;y_{\alpha}) \vdash (x)_{l} \in P_{\zeta_{i(l)}}\) for some $i(l)<n$ and \(\varphi_{\alpha}(x;y_{\alpha}) \vdash (x)_{l} \in O\) for \(l \geq k\).  

For each $\alpha < \kappa$, let $\varphi'_{\alpha}(x)$ be a complete $L_{w_{\alpha}}$-formula, without parameters, in the variables $x$ implied by $\varphi_{\alpha}(x;y_{\alpha})$ (which is unique up to logical equivalence, since $\varphi_{\alpha}(x;y_{\alpha})$ was assumed to be a complete $L_{w_{\alpha}}$-formula).  Clearly we have, for all $l < k$, \(\varphi'_{\alpha}(x) \vdash (x)_{l} \in P_{\zeta_{i(l)}}\) and \(\varphi'_{\alpha}(x) \vdash (x)_{l} \in O\) for \(l \geq k\), since these are formulas without parameters in $L_{r} \subseteq L_{w_{\alpha}}$. Since all the symbols in the language are unary, it is easy to see from quantifier-elimination that for each $\alpha < \kappa$ and $\eta \in \omega^{\alpha}$, $\varphi_{\alpha}(x;a_{\eta})$ is equivalent to a conjunction of the following:
\begin{enumerate}
\item $\varphi'_{\alpha}(x)$.
\item $(x)_{l} \neq (a_{\eta})_{i}$ for $l < l(x)$ and $i < l(a_{\eta})$ (using the minimality of $l(x)$).
\item $(f_{\delta \zeta_{i(l)}}((x)_{l}) = (a_{\eta})_{i})^{t^{0}_{\delta,l,i}}$ for $l < k$, $\delta\in r$ with $\delta < \zeta_{i(l)}$, and $i < l(a_{\eta})$, and for some $t^{0}_{\delta,l,i} \in \{0,1\}$. 
\item $((f_{\delta \xi} \circ p_{\xi})((x)_{l}) = (a_{\eta})_{i})^{t^{1}_{\delta, \xi,l,i}}$ for $\delta \leq \xi$ from $r$, $k \leq l < l(x)$, and $i < l(a_{\eta})$, and for some $t^{1}_{\delta, \xi,l,i} \in \{0,1\}$. 
\item $((f_{\delta \xi} \circ p_{\xi})((x)_{l}) = (a_{\eta})_{i})^{t^{2}_{\delta, \xi,l,i}}$ for $\delta \leq \xi$ from $w_{\alpha}$, $\xi \in v_{\alpha}$, $k \leq l < l(x)$, and $i < l(a_{\eta})$, and for some $t^{2}_{\delta, \xi,l,i} \in \{0,1\}$.
\end{enumerate}
Choose \(\gamma < \kappa\) so that if \(\alpha < \kappa\) and \(\varphi_{\alpha}(x;a_{0^{\alpha}})$ implies a positive instance of one of the equalities in (3) and (4), then this is implied by \(\varphi_{\alpha'}(x;a_{0^{\alpha'}})\) for some \(\alpha' < \gamma\) (possible as the root is finite).   

By assumption, there are \(\alpha, \alpha'\) with \(\gamma < \alpha < \alpha' < \kappa\) such that \(f(\{\xi, \zeta\}) = 1\) for all \(\xi \in v_{\alpha}, \zeta \in v_{\alpha'}\).  Choose \(\eta \in \omega^{\alpha}\), \(\nu \in \omega^{\alpha'}\) both extending $0^{\gamma}$ such that \(\eta \perp \nu\).  Let \(A = \langle a_{\eta}, a_{\nu} \rangle_{L_{w_{\alpha} \cup w_{\alpha'}}}\) be the finite \(L_{w_{\alpha} \cup w_{\alpha'}}\)-structure generated by \(a_{\eta}\) and \(a_{\nu}\).  Pick $d \models \{\varphi_{\delta}(x;a_{0^{\delta}}): \delta \leq \gamma\} \cup \{\varphi_{\alpha}(x;a_{\eta})\}$ and $e \models \{\varphi_{\delta}(x;a_{0^{\delta}}) : \delta \leq \gamma\} \cup \{\varphi_{\alpha'}(x;a_{\nu})\}$.  By the choice of $\gamma$, the $s$-indiscernibility of $(a_{\eta})_{\eta \in \omega^{<\kappa}}$, and quantifier-elimination and the observation above, we have \(\text{tp}_{L_{r}}(d/A) = \text{tp}_{L_{r}}(e/A)\).  Let \(B = \langle d,A \rangle_{L_{w_{\alpha}}}\) and \(C = \langle e,A \rangle_{L_{w_{\alpha'}}}\).  By Lemma \ref{consistency}, there is a \(D \in \mathbb{K}_{w_{\alpha} \cup w_{\alpha'}}\) such that \(D = \langle g, A \rangle^{D}_{L_{w_{\alpha} \cup w_{\alpha'}}}\) such that \(l(g) = l(d) = l(e)\) and \(\langle g,A \rangle_{L_{w_{\alpha}}} \cong B\) over \(A\) and \(\langle g, A \rangle_{L_{w_{\alpha'}}} \cong C\) over \(A\).  Using the extension property to embed $D$ in $\mathbb{M}$ over $A$, it follows that in \(\mathbb{M}\), \(g \models \{\varphi_{\alpha}(x;a_{\eta}), \varphi_{\alpha'}(x;a_{\nu})\}\), contradicting the definition of sct-pattern.  This completes the proof.  
\end{proof}

\subsection{Conclusion}

\begin{thm} \label{first main theorem}
There is a stable theory \(T\) such that \(\kappa_{\text{cdt}}(T) \neq \kappa_{\text{sct}}(T) + \kappa_{\text{inp}}(T)\).  Moreover, it is consistent with ZFC that for every regular uncountable \(\kappa\), there is a stable theory \(T\) with \(|T| = \kappa\) and \(\kappa_{\text{cdt}}(T) > \kappa_{\text{sct}}(T) + \kappa_{\text{inp}}(T)\).  
\end{thm}

\begin{proof}
If $\kappa$ is regular and uncountable satisfying $\text{Pr}_{1}(\kappa,\kappa,2,\aleph_{0})$, then choose \(f: [\kappa]^{2} \to 2\) witnessing \(\text{Pr}_{1}(\kappa,\kappa,2,\aleph_{0})\).  There can be no homogeneous set of size \(\kappa\) for \(f\), since given any $\{x_{\alpha} : \alpha < \kappa\} \subseteq \kappa$, enumerated in increasing order, we obtain a pairwise disjoint family of finite sets $(v_{\alpha})_{\alpha < \kappa}$ defined by $v_{\alpha} = \{x_{\alpha}\}$ and $\mathrm{Pr}_{1}(\kappa,\kappa,2,\aleph_{0})$ implies that for each color $i \in \{0,1\}$, there are $\alpha < \alpha'$ such that $f(\{x_{\alpha},x_{\alpha'}\}) =i$. Moreover, $\mathrm{Pr}_{1}(\kappa,\kappa,2,\aleph_{0})$ implies directly that there can be no collection \((v_{\alpha} : \alpha < \kappa)\) of disjoint finite sets such that, given \(\alpha < \alpha' < \kappa\), there are \(\xi \in v_{\alpha}, \zeta \in v_{\alpha'}\) such that \(f(\{\xi, \zeta\}) = 0\).  Let \(T = T^{*}_{\kappa, f}\).  This theory is stable by Lemma \ref{stable}. Additionally, \(\kappa_{\mathrm{cdt}}(T) = \kappa^{+}\), by Proposition \ref{cdtcomputation}, but \(\kappa_{\text{sct}}(T) < \kappa^{+}\) and \(\kappa_{\text{inp}}(T) < \kappa^{+}\) by Proposition \ref{sctcomputation} and Proposition \ref{inpcomputation} respectively.  By Fact \ref{ShelahPr} and Observation \ref{monotonicity}, \(\text{Pr}_{1}(\lambda^{++}, \lambda^{++}, 2, \aleph_{0})\) holds for any regular uncountable $\lambda$.  Then \(T = T^{*}_{\kappa,f}\) gives the desired theory, for \(\kappa = \lambda^{++}\) and any \(f\) witnessing \(\text{Pr}_{1}(\lambda^{++}, \lambda^{++}, 2, \aleph_{0})\).  For the ``moreover'' clause, note that ZFC is equiconsistent with ZFC + GCH + ``there are no inaccessible cardinals" (if \(V \models \text{ZFC}\) has a strongly inaccessible in it, replace \(V\) by \(V_{\kappa}\) for \(\kappa\) the least such, then consider \(L\) in \(V\)) which entails that every regular uncountable cardinal is a successor.  By Theorem \ref{GalvinPr} this implies that \(\text{Pr}_{1}(\kappa, \kappa,2, \aleph_{0})\) holds for all regular uncountable cardinals \(\kappa\), which completes the proof.  \end{proof}
%
%
%
\begin{rem}
In \cite[Theorem 3.1]{ArtemNick}, it was proved that \(\kappa_{\text{cdt}}(T) = \kappa_{\text{inp}}(T) + \kappa_{\text{sct}}(T)\) for any countable theory \(T\).  The above theorem shows that in a certain sense, this result is best possible.  
\end{rem}

\begin{rem}
It would be interesting to know if for $\kappa$ strongly inaccessible, there is a theory $T$ with $\kappa_{\mathrm{cdt}}(T) = \kappa^{+} > \kappa_{\text{inp}}(T) + \kappa_{\text{sct}}(T)$.  
\end{rem}

\section{Compactness of ultrapowers}\label{compactness}

In this section we study the decay of saturation in regular ultrapowers. We say an ultrafilter $\mathcal{D}$ on $I$ is \emph{regular} if there is a collection of sets $\{X_{\alpha} : \alpha < |I|\} \subset \mathcal{D}$ such that for all $t \in I$, the set $\{\alpha : t \in X_{\alpha}\}$ is finite and $\mathcal{D}$ is \emph{uniform} if all sets in $\mathcal{D}$ have cardinality $|I|$.  Recall that a model $M$ is called \emph{$\lambda$-compact} if every (partial) type over $M$ of cardinality less than $\lambda$ is realized in $M$.  In the case that the language has size at most $\lambda$, the notions of $\lambda$-compactness and $\lambda$-saturation are equivalent but they may differ if the cardinality of the language exceeds $\lambda$, since, in this case, types over sets of parameters of size less than $\lambda$ may still contain more than $\lambda$ many formulas, in general.  Given a theory $T$, we start with a regular uniform ultrafilter $\mathcal{D}$ on $\lambda$ and a $\lambda^{++}$-saturated model $M \models T$.  We then consider whether the ultrapower $M^{\lambda}/\mathcal{D}$ is $\lambda^{++}$-compact.  Shelah has shown \cite[Theorem VI.4.7]{shelah1990classification} that if $T$ is not simple, then in this situation $M^{\lambda}/\mathcal{D}$ will not be $\lambda^{++}$-compact and asked whether an analogous result holds for theories $T$ with $\kappa_{\text{inp}}(T) > \lambda^{+}$.  We will show by direct construction that $\kappa_{\text{inp}}(T) > \lambda^{+}$ does not suffice but, by modifying an argument due to Malliaris and Shelah \cite[Claim 7.5]{Malliaris:2012aa}, $\kappa_{\text{sct}}(T) > \lambda^{+}$ is sufficient to obtain a decay in compactness, by levaraging the finite square principles of Kennedy and Shelah \cite{kennedyshelah}.  

\subsection{A counterexample}

Fix $\kappa$ a regular uncountable cardinal.  Let $L'_{\kappa} = \langle O, P_{\alpha},p_{\alpha} : \alpha < \kappa \rangle$ be a language where $O$ and each $P_{\alpha}$ is a unary predicate and each $p_{\alpha}$ is a unary function.  Define a theory $T'_{\kappa}$ to be the universal theory with the following as axioms:
\begin{enumerate}
\item $O$ and the $(P_{\alpha})_{\alpha < \kappa}$ are pairwise disjoint.
\item For all $\alpha < \kappa$, $p_{\alpha}$ is a function such that $(\forall x \in O)[p_{\alpha}(x) \in P_{\alpha}]$ and $(\forall x \not\in O)[p_{\alpha}(x) = x]$.
\end{enumerate}
Given a finite set $w \subset \kappa$, define $L'_{w} = \langle O,P_{\alpha}, p_{\alpha} : \alpha \in w \rangle$.  Let $\mathbb{K}'_{w}$ denote the class of finite models of $T'_{\kappa} \upharpoonright L'_{w}$.  

\begin{lem}
Suppose $w \subset \kappa$ is finite.  Then $\mathbb{K}'_{w}$ is a Fra\"iss\'e class.
\end{lem}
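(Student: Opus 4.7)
The plan is to verify the three Fra\"iss\'e conditions (HP, JEP, AP) for $\mathbb{K}'_w$ directly, along the lines of the argument given earlier for $\mathbb{K}_w$ but considerably simpler because the language lacks both the composition-style axioms for $f_{\alpha\beta}$ and the coloring-dependent axiom (4).

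First I would dispatch HP: the axioms defining $T'_\kappa \upharpoonright L'_w$ are all universal, so any $L'_w$-substructure of a member of $\mathbb{K}'_w$ again models them. A substructure is automatically closed under the unary functions $p_\alpha$ for $\alpha \in w$, which is the only closure requirement.

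Next I would prove AP by the free amalgam construction. Given $A,B,C \in \mathbb{K}'_w$ with embeddings $A \hookrightarrow B$ and $A \hookrightarrow C$, I may assume $B \cap C = A$ as sets and let $D = B \cup C$. Interpret each unary predicate by $Q^D = Q^B \cup Q^C$ and each function by $p_\alpha^D = p_\alpha^B \cup p_\alpha^C$; this is well-defined because $B$ and $C$ agree on $A$. The two axioms to verify are both pointwise: disjointness of $O$ and the $P_\alpha$ in $D$ follows from disjointness in $B$ and $C$ separately (each element of $D$ lies in one of $B$ or $C$, where its predicate membership is already determined), and the axiom that $p_\alpha$ sends $O$ into $P_\alpha$ and fixes everything off $O$ is similarly inherited from $B$ or $C$ depending on where the input lives. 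Thus $D \in \mathbb{K}'_w$, giving AP.

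For JEP, I would observe that the empty $L'_w$-structure satisfies all axioms vacuously and embeds into every member of $\mathbb{K}'_w$, so JEP reduces to the AP instance just verified. While the lemma only asserts that $\mathbb{K}'_w$ is a Fra\"iss\'e class, it will also be useful downstream to note uniform local finiteness: any set of $n$ elements generates an $L'_w$-structure of size at most $(|w|+1)n$, since for each element $a$ we either have $a \in P_\alpha \cup (D \setminus O)$, forcing $p_\alpha(a) = a$, or $a \in O$, in which case at most $|w|$ new elements $p_\alpha(a) \in P_\alpha$ are introduced. I do not expect any real obstacle here; the point is simply that the amalgamation problem is transparent once the functions $f_{\alpha\beta}$ and the color-sensitive axiom (4) are absent from the language.
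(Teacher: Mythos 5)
Your proof is correct and takes essentially the same approach as the paper: HP from universality of the axioms, AP by the free amalgam $D = B \cup C$ with pointwise verification, and JEP reduced to AP via the empty structure. The extra observation on uniform local finiteness is a harmless (and useful) addition that the paper makes only implicitly for this reduct.
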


\begin{proof}
The axioms of $T'_{\kappa}\upharpoonright L_{w}$ are universal so HP is clear.  As we allow the empty structure to be a model, JEP follows from AP.  For AP, we reduce to the case where $A,B,C \in \mathbb{K}'_{w}$, $A$ is a substructure of both $B$ and $C$ and $B \cap C = A$.  Because all the functions in the language are unary, we may define an $L'_{w}$-structure $D$ on $B \cup C$ by taking unions of the relations and functions as interpreted on $B$ and $C$.  It is easy to see that $D \in \mathbb{K}'_{w}$, so we are done.  
\end{proof}

By Fra\"iss\'e theory, for each finite $w \subset \kappa$, there is a unique countable ultrahomogeneous $L'_{w}$-structure with age $\mathbb{K}'_{w}$.  Let $T^{\dag}_{w}$ denote its theory.  

We remark that the theory $T^{\dag}_{w}$ is almost a reduct of $T^{*}_{w}$ considered in the previous sections, with the difference that the functions $p_{\alpha}$ are partial in $T^{*}_{w}$ and total in $T^{\dag}_{w}$.  One can easily check that $T^{\dag}_{w}$ is interpretable in $T^{*}_{w}$ for $w$ finite, interpreting $O$ by $\bigwedge_{\alpha \in w} \text{dom}(p_{\alpha})$.  Since this interpretation is not uniform in $w$, we will still need to rapidly repeat the same steps in the analysis above to show that the $T^{\dag}_{w}$ are coherent.  

\begin{lem}
Suppose $v$ and $w$ are finite sets with $w \subset v \subset \kappa$.  Then $T^{\dag}_{w} \subset T^{\dag}_{v}$.  
\end{lem}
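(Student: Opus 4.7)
The plan is to emulate the proof of Lemma \ref{dirlim}, invoking Lemma \ref{KPTredux} with the finite-reduct Fra\"iss\'e classes $\mathbb{K}'_w \subseteq \mathbb{K}'_v$. By induction on $|v \setminus w|$ it suffices to treat the case $v = w \cup \{\gamma\}$ for a single $\gamma \in \kappa \setminus w$; the general claim follows by chaining such one-step inclusions. So fix such $\gamma$ and verify the two hypotheses of Lemma \ref{KPTredux}.

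For condition (1), given $A \in \mathbb{K}'_w$, expand $A$ to an $L'_v$-structure $A^{+}$ by adjoining a single fresh point $*$, placing it in $P_\gamma$, declaring $p_\gamma(o) = *$ for every $o \in O(A)$, and letting $p_\gamma$ act as the identity off $O$. Then $A^{+} \in \mathbb{K}'_v$ because $p_\gamma$ is a function $O \to P_\gamma$ and the $P_\alpha$ (for $\alpha \in v$) remain pairwise disjoint. For condition (2), suppose $A,B \in \mathbb{K}'_w$, $\pi : A \to B$ is an $L'_w$-embedding, and $C \in \mathbb{K}'_v$ with $C = \langle A \rangle^C_{L'_v}$; assume $B \cap C = A$. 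Since $L'_v$ adds only $P_\gamma$ and $p_\gamma$ and the latter is forced to be the identity off $O$, the elements of $C$ not already in $A$ all lie in $P_\gamma(C)$ and appear as values $p_\gamma^C(o)$ for some $o \in O(A)$. Define the underlying set of $D$ by
\[
D \;=\; B \;\cup\; \bigl(P_\gamma(C) \setminus A\bigr) \;\cup\; \{\,*_b : b \in O(B) \setminus \pi(O(A))\,\},
\]
place the new $*_b$ together with $P_\gamma(C) \setminus A$ into $P_\gamma^D$, interpret the remaining $L'_w$-predicates and the $p_\alpha$ ($\alpha \in w$) on $D$ to agree with $B$, and set $p_\gamma^D(\pi(o)) = \tilde\pi(p_\gamma^C(o))$ for $o \in O(A)$, $p_\gamma^D(b) = *_b$ for $b \in O(B) \setminus \pi(O(A))$, and the identity elsewhere, where $\tilde\pi : C \to D$ extends $\pi$ by being the identity on $P_\gamma(C) \setminus A$. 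One checks directly that $D \in \mathbb{K}'_v$, that $D = \langle B \rangle^D_{L'_v}$, and that $\tilde\pi$ is an $L'_v$-embedding extending $\pi$.

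The main obstacle here is purely bookkeeping --- making sure $p_\gamma^D$ is well-defined on $\pi(O(A))$ (which is ensured by $\tilde\pi$ restricting to $\pi$ on $A$) and that no axiom is violated when the images of the $*_b$'s and the old $P_\gamma(C) \setminus A$ are amalgamated. Crucially, there is no analogue of axiom (4) from the previous construction to juggle: $T'_\kappa$ contains no coloring-dependent constraints linking different $P_\alpha$'s, so no compatibility between $p_\gamma$ and the other $p_\alpha$'s needs to be arranged. Once conditions (1) and (2) of Lemma \ref{KPTredux} are in hand, we conclude $\mathrm{Th}(\mathrm{Flim}(\mathbb{K}'_v)) \upharpoonright L'_w = \mathrm{Th}(\mathrm{Flim}(\mathbb{K}'_w))$, which is exactly the statement $T^{\dag}_w \subset T^{\dag}_v$.
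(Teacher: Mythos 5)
Your proof matches the paper's argument step for step: the one-step reduction to $v = w \cup \{\gamma\}$, the appeal to Lemma \ref{KPTredux}, and the construction of the amalgam $D$ from $B$, the new $P_\gamma$-points of $C$, and formal targets $*_b$ for the $O$-points of $B$ outside $\pi(O(A))$. Your explicit verification of condition (1) of Lemma \ref{KPTredux} and your observation that no analogue of axiom (4) from the earlier construction intervenes are both correct and slightly more careful than the paper's exposition.

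One bookkeeping omission, which the paper's own proof also glosses over: an element $a \in A$ that carries no $L'_w$-predicate may nevertheless lie in $P_\gamma(C)$, since $C$ is free to place such elements into $P_\gamma$. For such an $a$ we have $\tilde\pi(a) = \pi(a) \in B$, yet as written $P_\gamma^D = (P_\gamma(C) \setminus A) \cup \{*_b\}$ is disjoint from $B$, so $\tilde\pi$ would fail to preserve $P_\gamma$ on those points. The easy fix is to set $P_\gamma^D = \pi\bigl(P_\gamma(C) \cap A\bigr) \cup \bigl(P_\gamma(C) \setminus A\bigr) \cup \{*_b\}$, or equivalently to take $\pi$ to be an inclusion map (so $A \subseteq B$ and $\tilde\pi$ is the identity on all of $P_\gamma(C)$) and then put $P_\gamma^D = P_\gamma(C) \cup \{*_b\}$. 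This does not change the approach, which is the same as the paper's.
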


\begin{proof}
By induction, it suffices to consider the case when $v = w \cup \{\gamma\}$ for some $\gamma \in \kappa \setminus w$.  By Fact \ref{KPTredux}, we must show (1) that $A \in \mathbb{K}'_{w}$ if and only if there is $D \in \mathbb{K}'_{v}$ such that $A$ is an $L'_{w}$-substructure of $D \upharpoonright L'_{w}$ and (2) that whenever $A,B \in \mathbb{K}'_{w}$, $\pi : A \to B$ is an $L'_{w}$-embedding, and $C \in \mathbb{K}'_{v}$ satisfies $C = \langle A \rangle^{C}_{L'_{v}}$ then there is $D \in \mathbb{K}'_{v}$ such that $B$ is an $L'_{w}$-substructure of $D \upharpoonright L'_{w}$ and $\pi$ extends to an $L'_{v}$-embedding $\tilde{\pi} : C \to D$.  

For (1), it is clear from definitions that if $D \in \mathbb{K}'_{v}$ then $D \upharpoonright L'_{w} \in \mathbb{K}'_{w}$.  Given $A \in \mathbb{K}'_{w}$, we may construct a suitable $L'_{v}$-structure $D$ as follows.  If $O^{A} = \emptyset$, we may simply expand $A$ to $D$ by setting $P_{\gamma}^{D} = \emptyset$ and this trivially satisfies the required axioms. So we will assume $O^{A}$ is non-empty and let the underlying set of $D$ be $A \cup \{*\}$.  We interpret the predicates of $L'_{w}$ to have the same interpretation as on $A$, and we interpret the functions of $L'_{w}$ so that their restriction to $A$ are their interpretations on $A$ and so that the functions are the identity on $*$.  We additionally set $P^{D}_{\gamma} = \{*\}$ and $p^{D}_{\gamma}$ to be the identity on the complement of $O^{D}$ ($=O^{A}$) and the constant function with value $*$ on $O^{D}$.  Clearly $D \in \mathbb{K}'_{w}$, $D = \langle A \rangle_{L'_{v}}$, and $A$ is an $L'_{w}$-substructure of $D \upharpoonright L'_{w}$. 

For (2), suppose $A,B \in \mathbb{K}'_{w}$, $\pi : A \to B$ is an embedding, and $C \in \mathbb{K}'_{v}$ satisfies $C = \langle A \rangle^{C}_{L'_{v}}$.  The requirement that $C = \langle A \rangle^{C}_{L'_{v}}$ entails that any points of $C \setminus A$ lie in $P_{\gamma}^{C}$.  In particular, $O^{A} = O^{C}$ and we may use this notation interchangeably.  Let $E = O^{B} \setminus \pi(O^{A})$, so that we may write $O^{B} = \pi(O^{A}) \sqcup E$.  Define an $L'_{v}$-structure $D$ whose underlying set is $B \cup P_{\gamma}(A) \cup \{*_{e} : e \in E\}$.  Interpret the predicates of $L'_{w}$ on $D$ to have the same interpretation as on $B$ and interpret the functions of $L'_{w}$ so that they agree with their interpretations on $B$ and are the identity on the complement of $B$. Then define $P_{\gamma}(D) = P_{\gamma}(A) \cup \{*_{e} : e \in E\}$ and interpret $p^{D}_{\gamma}$ by 
$$
p_{\gamma}^{D}(x) = \left\{
\begin{matrix}
p_{\gamma}^{C}(a) & \text{ if } x = \pi(a) \\
*_{x} & \text{ if } x \not\in \pi(O^{A}).
\end{matrix}
\right.
$$
Clearly $D \in \mathbb{K}'_{v}$.  Extend $\pi$ to a map $\tilde{\pi}: C \to D$ by defining $\pi$ to be the identity on $P_{\gamma}(C)$.  We claim $\tilde{\pi}$ is an $L'_{v}$-embedding:  note that for all $x \in O^{C}$, $p_{\gamma}^{D}(\tilde{\pi}(x)) = p_{\gamma}^{C}(x) = \tilde{\pi}(p_{\gamma}^{C}(x))$ and $\tilde{\pi}$ obviously respects all other structure from $L'_{w}$ as $\pi$ is an $L'_{w}$-embedding.  
\end{proof}

Define the theory $T^{\dag}_{\kappa}$ to be the union of $T^{\dag}_{w}$ for all finite $w \subset \kappa$.  This is a complete $L'_{\kappa}$-theory with quantifier elimination, as these properties are inherited from the $T^{\dag}_{w}$.  Fix a monster $\mathbb{M} \models T_{\kappa}^{\dag}$ and work there. 
%
%
%
\begin{prop}\label{bound}
The theory $T^{\dag}_{\kappa}$ is stable and $\kappa_{inp}(T^{\dag}_{\kappa}) = \kappa^{+}$.
\end{prop}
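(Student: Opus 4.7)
The plan is to show both inequalities separately, in direct parallel to the proof of Proposition \ref{cdtcomputation}. The construction is in fact easier here, since the absence of the colouring axiom (4) and of the transition functions $f_{\alpha\beta}$ means there is no obstacle to combining conditions on $p_{\alpha}(x)$ across different levels.

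For the lower bound $\kappa_{\text{inp}}(T^{\dag}_{\kappa}) \geq \kappa^{+}$, I would exhibit an explicit inp-pattern of height $\kappa$ in one free variable. Take $\varphi_{\alpha}(x;y_{\alpha})$ to be the formula $p_{\alpha}(x) = y_{\alpha}$. For each $\alpha < \kappa$, choose an $\omega$-sequence of pairwise distinct elements $(a_{\alpha,i})_{i<\omega}$ from $P_{\alpha}$; this is possible since $\mathbb{M}$ is sufficiently saturated and $P_{\alpha}$ is infinite in the Fra\"iss\'e limit. Row-inconsistency is immediate: since $p_{\alpha}$ is a function, $\{p_{\alpha}(x) = a_{\alpha,i}\, : \, i<\omega\}$ is $2$-inconsistent. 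Path-consistency reduces by compactness to realising finitely many conditions of the form $p_{\alpha_{k}}(x) = c_{k}$ with the $\alpha_{k}$ distinct and $c_{k} \in P_{\alpha_{k}}$; by quantifier elimination and the extension property for $\text{Flim}(\mathbb{K}'_{w})$ for $w = \{\alpha_{0},\ldots,\alpha_{n-1}\}$, the finite $L'_{w}$-structure obtained by adjoining a new object $x$ with prescribed images under the $p_{\alpha_{k}}$ lies in $\mathbb{K}'_{w}$ and hence embeds into $\mathbb{M}$. Thus every path is consistent, giving an inp-pattern of height $\kappa$.

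For the upper bound $\kappa_{\text{inp}}(T^{\dag}_{\kappa}) \leq \kappa^{+}$, I would invoke the previous lemma: $T^{\dag}_{\kappa}$ is stable and $|T^{\dag}_{\kappa}| = \kappa$. Since $\kappa_{\text{inp}}(T) \leq \kappa_{\text{cdt}}(T)$ for any theory, and $\kappa_{\text{cdt}}(T) \leq |T|^{+}$ whenever $T$ is stable (because stable theories do not admit the tree property, so all cdt-patterns have bounded height given by the usual counting of types), we conclude $\kappa_{\text{inp}}(T^{\dag}_{\kappa}) \leq \kappa^{+}$.

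There is no real obstacle: the whole content is the finite-satisfiability check for the path type in the lower bound, and that is handled cleanly by the amalgamation/extension property inside each finite reduct $\mathbb{K}'_{w}$. The theory $T^{\dag}_{\kappa}$ is designed precisely so that the levels $P_{\alpha}$ interact only through the objects of $O$ via the functions $p_{\alpha}$, with no cross-level constraints, so assembling an inp-pattern one coordinate at a time poses no difficulty.
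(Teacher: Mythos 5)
Your proof is correct and follows essentially the same route as the paper: the same inp-pattern with formulas $p_{\alpha}(x) = y_{\alpha}$ and row parameters taken from distinct elements of $P_{\alpha}$, with path-consistency from the Fra\"iss\'e/quantifier-elimination setup and the upper bound from stability plus $|T^{\dag}_{\kappa}| = \kappa$. The paper states the path-consistency as ``easy to check''; your explicit reduction to an amalgamation problem in $\mathbb{K}'_{w}$ is a reasonable unpacking of that claim but not a different argument.
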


\begin{proof}
For each $\alpha < \kappa$, choose for each $\beta < \omega$ $a_{\alpha,\beta} \in P_{\alpha}(\mathbb{M})$ such that $\beta \neq \beta'$ implies $a_{\alpha,\beta} \neq a_{\alpha,\beta'}$.  It is easy to check that, for all functions $g: \kappa \to \omega$, $\{p_{\alpha}(x) = a_{\alpha,g(\alpha)} : \alpha < \kappa\}$ is consistent and, for all $\alpha < \kappa$, $\{p_{\alpha}(x) = a_{\alpha, \beta} : \beta < \omega\}$ is $2$-inconsistent by the injectivity of the sequence $(a_{\alpha,\beta})_{\beta < \omega}$.  Setting $k_{\alpha} = 2$ for all $\alpha$, we see that $(p_{\alpha}(x) = y_{\alpha} : \alpha < \kappa)$, $(a_{\alpha, \beta})_{\alpha < \kappa, \beta < \omega}$, and $(k_{\alpha})_{\alpha < \kappa}$ forms an inp-pattern of height $\kappa$ so $\kappa_{\text{inp}}(T_{\kappa}^{\dag}) \geq \kappa^{+}$.  The stability of $T^{\dag}_{\kappa}$ follows from an argument identical to Lemma \ref{stable} which, by Fact \ref{easy inequalities}, gives the upper bound $\kappa_{\text{inp}}(T_{\kappa}^{\dag}) \leq \kappa^{+}$.
\end{proof}

\begin{prop}\label{saturation}
Suppose $\mathcal{D}$ is an ultrafilter on $\lambda$, $\kappa = \lambda^{+}$, and $M \models T^{\dag}_{\kappa}$ is $\lambda^{++}$-saturated.  Then $M^{\lambda}/\mathcal{D}$ is $\lambda^{++}$-saturated.  
\end{prop}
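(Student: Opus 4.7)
The plan is to realize an arbitrary $1$-type $p(x)$ over a set $A \subseteq N := M^{\lambda}/\mathcal{D}$ with $|A| \leq \lambda^{+}$ by directly constructing a function $g \in M^{\lambda}$ whose class in the ultrapower realizes $p(x)$. The idea is to exploit the fact that $T^{\dag}_{\kappa}$ has an extremely weak type structure: by quantifier-elimination, constraints on different projections $p_{\alpha}(x)$ are independent, so the realization problem reduces to realizing a pointwise type in $M$ at each index and then gluing via \L{}o\'s's theorem.

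First I would use quantifier-elimination and the completeness of $p(x)$ to fix the sort of $x$. The case $x \in P_{\beta}$ for some $\beta$ is immediate: $p(x)$ only adds inequalities $x \neq a$ for $a$ in some $A_{0} \subseteq A \cap P_{\beta}$ of size $\leq \lambda^{+}$, and since $|P_{\beta}(N)| \geq |P_{\beta}(M)| \geq \lambda^{++}$ (using $\lambda^{++}$-saturation of $M$), any element of $P_{\beta}(N)$ outside the exceptional set works. The substantive case is $x \in O$, where $p(x)$ decomposes as $\{x \in O\}$ together with equalities $p_{\alpha}(x) = c_{\alpha}$ for $\alpha$ in some $S_{0} \subseteq \kappa$ and $c_{\alpha} \in A \cap P_{\alpha}$, inequalities $p_{\alpha}(x) \neq d$ for certain $(\alpha, d)$ with $d \in A \cap P_{\alpha}$, and inequalities $x \neq a$ for $a \in A \cap O$.

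Next I would fix a representative $c(t) \in M$ for each parameter $c \in A$ and each index $t \in \lambda$, and define $p_{t}(x)$ to be the pointwise type over $M$ obtained from $p(x)$ by replacing every occurrence of a parameter $c$ by $c(t)$. Then $p_{t}(x)$ has $\leq \lambda^{+}$ parameters, all of them in $M$. The critical point is that $p_{t}(x)$ is finitely satisfiable in $M$: any finite subtype specifies $p_{\alpha_{i}}(x) = c_{i}(t)$ for finitely many distinct $\alpha_{1}, \ldots, \alpha_{n}$ together with finitely many inequalities, and an easy amalgamation in the Fra\"iss\'e class $\mathbb{K}'_{\{\alpha_{1}, \ldots, \alpha_{n}\}}$ shows that the set of realizations of the equalities alone is infinite, so the finitely many inequalities can be avoided. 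By $\lambda^{++}$-saturation of $M$, pick $g(t) \in O(M)$ realizing $p_{t}(x)$ for each $t$.

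Finally, let $g : \lambda \to M$ assemble these pointwise choices; then $[g]_{\mathcal{D}} \in N$ realizes $p(x)$ by \L{}o\'s's theorem, since for each formula $\varphi(x; \bar{c}) \in p$, the set $\{t \in \lambda : M \models \varphi(g(t); \bar{c}(t))\}$ equals $\lambda \in \mathcal{D}$ by construction. The main conceptual step -- and the place where the argument specifically requires $T^{\dag}_{\kappa}$ rather than an arbitrary theory with $\kappa_{\text{inp}}(T) > \lambda^{+}$ -- is the finite satisfiability of the pointwise type. It rests on the complete orthogonality of the formulas $p_{\alpha}(x) = y_{\alpha}$ witnessing the canonical inp-pattern of Proposition \ref{bound}: any finite conjunction of instances of them is consistent over arbitrary parameters, whereas for a more complex theory with a large inp-pattern this independence can fail badly, blocking the analogous pointwise construction.
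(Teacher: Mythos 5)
There is a genuine gap at the central step, namely the assertion that the pointwise type $p_{t}(x)$ is finitely satisfiable in $M$. The type $p(x)$ being consistent over $N = M^{\lambda}/\mathcal{D}$ only guarantees, via \L{}o\'s's theorem, that each constraint holds on a $\mathcal{D}$-large set of indices, not at every index. In particular, if $p(x)$ proves $p_{\alpha}(x) = c_{\alpha}$ with $c_{\alpha} \in P_{\alpha}(N)$, a chosen representative may have $c_{\alpha}(t) \notin P_{\alpha}(M)$ for some $t$; then $p_{\alpha}(x) = c_{\alpha}(t)$ is outright inconsistent with $x \in O$ in $M$, since the axioms force $p_{\alpha}$ to map $O$ into $P_{\alpha}$. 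The amalgamation argument you invoke presupposes the finitely many instances $c_{i}(t)$ lie in the correct sorts $P_{\alpha_{i}}(M)$, and that is exactly what can fail. A similar pointwise clash can arise from inequalities: if $p(x)$ proves $p_{\alpha}(x) = c$ and $p_{\alpha}(x) \neq d$, then $c \neq d$ in $N$, but $c(t) = d(t)$ is possible for some $t$, making $p_{t}(x)$ inconsistent. So the sentence ``pick $g(t)$ realizing $p_{t}(x)$'' cannot be justified as stated.

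The paper's proof handles precisely this obstruction. After closing $A$ under definable functions and extending $q(x)$ so that it contains a full equality $p_{\alpha}(x) = c_{\alpha}$ for every $\alpha < \kappa$ (thereby eliminating the awkward $p_{\alpha}(x) \neq d$ constraints), it defines the set $X_{t} = \{\alpha : M \models P_{\alpha}(c_{\alpha}[t])\}$ and realizes at index $t$ only the \emph{filtered} pointwise type consisting of those equalities with $\alpha \in X_{t}$, together with $x \in O$ and $x \neq a[t]$. That filtered type is honestly consistent in $M$, and the key second observation is that, since $J_{\alpha} = \{t : \alpha \in X_{t}\} \in \mathcal{D}$, the resulting ultraproduct element still realizes the full type $q(x)$ by \L{}o\'s's theorem even though no single index realizes all the constraints. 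Your proposal is missing both ingredients: the filtering that restores pointwise consistency, and the recognition that a partial pointwise realization is enough. As a lesser point, you only split into the cases $x \in P_{\beta}$ and $x \in O$; since the $L'_{\kappa}$-sorts are not required to exhaust the model, there is a third (easy) case of elements outside $O \cup \bigcup_{\alpha} P_{\alpha}$ that must be dispatched as well.
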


\begin{proof}
Suppose $A \subseteq M^{\lambda}/\mathcal{D}$, $|A| = \kappa = \lambda^{+}$.  To show that any $q(x) \in S^{1}(A)$ is realized, we have three cases to consider:
\begin{enumerate}
\item $q(x) \vdash x \in P_{\alpha}$ for some $\alpha < \kappa$
\item $q(x) \vdash x \not\in O$ and $q(x) \vdash x\not\in P_{\alpha}$ for all $\alpha < \kappa$
\item $q(x) \vdash x \in O$.  
\end{enumerate}
It suffices to consider $q$ non-algebraic and $A = \text{dcl}(A)$.  In case (1), $q(x)$ is implied by 
$\{P_{\alpha}(x)\} \cup \{x \neq a : a \in A\}$ and in case (2), $q(x)$ is implied by $\{\neg O(x) \wedge \neg P_{\alpha}(x) : \alpha < \kappa\} \cup \{x \neq a : a \in A\}$.  To realize $q(x)$ in case (1), for each $t \in \lambda$, choose some $b_{t} \in P_{\alpha}(M)$ such that $b_{t} \neq a[t]$ for all $a \in A$, which is possible by the $\lambda^{++}$-saturation of $M$ and the fact that $|A| = \lambda^{+}$.  Let $b = \langle b_{t} \rangle_{t \in \lambda}/\mathcal{D}$.  By $\L$o\'{s}'s theorem, $b \models q$.  Realizing $q$ in case (2) is entirely similar. 

So now we show how to handle case (3).  Fix some complete type $q(x) \in S_{1}(A)$ such that $q(x) \vdash x \in O$.  First, we note that by possibly growing $A$ by $\kappa$ many elements, we may assume that there is a sequence $(c_{\alpha})_{\alpha < \kappa}$ from $A$ so that $q$ is equivalent to the following:
$$
\{x \in O\} \cup \{x \neq a : a \in O(A)\} \cup \{p_{\alpha}(x) = c_{\alpha}\},
$$ 
This follows from the fact that, for each $\alpha < \kappa$, either $q(x) \vdash p_{\alpha}(x) = c_{\alpha}$ for some $c_{\alpha}$, or it only proves inequations of this form.  In the latter case, we can choose some element $c_{\alpha} \in P_{\alpha}(M^{\lambda}/\mathcal{D})$ not in $A$ (possible by case (1) above) and extend $q(x)$ by adding the formula $p_{\alpha}(x) = c_{\alpha}$, which will then imply all inequations of the form $p_{\alpha}(x) \neq a$ for any $a \in A$, and this clearly remains finitely satisfiable.  So now given $q$ in the form described above, let $X_{t} = \{\alpha < \kappa : M \models P_{\alpha}(c_{\alpha}[t])\}$ for each $t \in \lambda$.  Let $q_{t}(x)$ denote the following set of formulas over $M$: 
$$
q_{t}(x) = \{x \in O\} \cup \{x \neq a[t]: a \in O(A)\} \cup \{p_{\alpha}(x) = c_{\alpha}[t] : \alpha \in X_{t}\}.
$$
By construction, if $\alpha \neq \alpha' \in X_{t}$ then $M \models P_{\alpha}(c_{\alpha}[t]) \wedge P_{\alpha'}(c_{\alpha'}[t])$ so this set of formulas is consistent and over a parameter set from $M$ of size at most $\kappa$, hence realized by some $b_{t} \in M$.  Let $b = \langle b_{t} \rangle_{t \in \lambda}/\mathcal{D}$ and let $J_{\alpha}$ be defined by $J_{\alpha} = \{t \in \lambda : M \models P_{\alpha}(c_{\alpha}[t])\}$.  Note that, for $t < \lambda$ and $\alpha < \kappa$, $t \in J_{\alpha}$ if and only if $\alpha \in X_{t}$.  As $q(x)$ is a consistent set of formulas, $J_{\alpha} \in \mathcal{D}$ and, by construction, $J_{\alpha} \subseteq \{t \in \lambda : M \models p_{\alpha}(b_{t}) = c_{\alpha}[t]\}$ so $M^{\lambda}/\mathcal{D} \models p_{\alpha}(b) = c_{\alpha}$.  It is obvious that $b$ satisfies all of the other formulas of $q$ so we are done.  
\end{proof}

\begin{cor} \label{second main theorem}
Suppose $T$ is a complete theory, $|I| = \lambda$, $\mathcal{D}$ on $I$ is a ultrafilter, and $M \models T$ is a $\lambda^{++}$-saturated model of $T$.  The condition that $\kappa_{\text{inp}}(T) > |I|^{+}$ is, in general, not sufficient to guarantee that $M^{I}/\mathcal{D}$ is not $\lambda^{++}$-compact.  In particular, by Fact \ref{easy inequalities}(2), the condition that $\kappa_{\text{cdt}}(T) > |I|^{+}$ is not sufficient to guarantee that $M^{I}/\mathcal{D}$ is not $\lambda^{++}$-compact.
\end{cor}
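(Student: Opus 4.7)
The plan is to deduce the corollary by combining Proposition \ref{bound} and Proposition \ref{saturation} for the specific theory $T = T^{\dag}_{\lambda^{+}}$. With $\kappa := \lambda^{+}$, Proposition \ref{bound} gives $\kappa_{\text{inp}}(T^{\dag}_{\lambda^{+}}) = (\lambda^{+})^{+} = \lambda^{++}$, so in particular $\kappa_{\text{inp}}(T^{\dag}_{\lambda^{+}}) > |I|^{+} = \lambda^{+}$ and the hypothesis of the first assertion is met.

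For the conclusion, I would invoke Proposition \ref{saturation} with this value of $\kappa$: for any regular ultrafilter $\mathcal{D}$ on $\lambda$ and any $\lambda^{++}$-saturated $M \models T^{\dag}_{\lambda^{+}}$, the ultrapower $M^{\lambda}/\mathcal{D}$ is $\lambda^{++}$-saturated. Since $\lambda^{++}$-saturation of a model implies $\lambda^{++}$-compactness (a finitely satisfiable partial type of size $<\lambda^{++}$ extends by the compactness theorem to a complete type over the same parameter set, which is then realized by saturation), this shows that the bound $\kappa_{\text{inp}}(T) > |I|^{+}$ does not suffice to force a decay of compactness. Thus $T^{\dag}_{\lambda^{+}}$ serves as the desired counterexample for the inp case.

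The cdt statement follows as an immediate consequence by the standard comparison $\kappa_{\text{inp}}(T) \leq \kappa_{\text{cdt}}(T)$, which holds because any inp-pattern of height $\kappa$, witnessed by rows $(a_{\alpha,i})_{\alpha < \kappa, i < \omega}$, can be rearranged into a tree $(a_{\eta})_{\eta \in \omega^{<\kappa}}$ by setting $a_{\eta} = a_{l(\eta)-1,\eta(l(\eta)-1)}$ for $l(\eta)$ a successor, and this yields a cdt-pattern of the same height with the same formulas. Hence $\kappa_{\text{cdt}}(T^{\dag}_{\lambda^{+}}) \geq \kappa_{\text{inp}}(T^{\dag}_{\lambda^{+}}) = \lambda^{++} > |I|^{+}$, and the same theory $T^{\dag}_{\lambda^{+}}$ furnishes the counterexample. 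There is no genuine obstacle here: the corollary is a formal packaging of the two preceding propositions, with the only minor point to verify being the passage from $\lambda^{++}$-saturation to $\lambda^{++}$-compactness, which is routine.
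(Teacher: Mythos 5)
Your proposal is correct and follows essentially the same route as the paper: instantiate the construction at $\kappa = \lambda^{+}$, invoke Proposition \ref{bound} to get $\kappa_{\text{inp}}(T^{\dag}_{\lambda^{+}}) = \lambda^{++} > |I|^{+}$, invoke Proposition \ref{saturation} for the $\lambda^{++}$-saturation (hence $\lambda^{++}$-compactness) of the ultrapower, and conclude the cdt case via $\kappa_{\text{inp}}(T) \leq \kappa_{\text{cdt}}(T)$. Your extra remarks spelling out the inp-to-cdt translation and the saturation-implies-compactness step are both accurate and merely make explicit what the paper takes for granted.
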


\begin{proof}
Given $\lambda$, $I$ with $|I| = \lambda$, and an ultrafilter $\mathcal{D}$ on $I$, choose any $\lambda^{++}$-saturated model of $T^{\dag}_{\lambda^{+}}$.  By Lemma \ref{bound}, $\kappa_{\text{cdt}}(T^{\dag}_{\lambda^{+}}) \geq \kappa_{\text{inp}}(T^{\dag}_{\lambda^{+}})  = \lambda^{++} > |I|^{+}$, but, by Proposition \ref{saturation}, $M^{I}/\mathcal{D}$ is $\lambda^{++}$-saturated and hence $\lambda^{++}$-compact.  
\end{proof}

\subsection{Loss of saturation from large sct-patterns}

If $T$ is not simple, then it has either the tree property of the first kind or the second kind\textemdash Shelah argues in \cite[Theorem VI.4.7]{shelah1990classification} by demonstrating that either property results in a decay of saturation with an argument tailored to each property.  The preceding section demonstrates that the analogy between TP$_{2}$ and $\kappa_{\text{inp}}(T) > |I|^{+}$ breaks down, but we show that the analogy between TP$_{1}$ and $\kappa_{\text{sct}}(T) > |I|^{+}$ survives, assuming some set theory.  The argument below is a straightforward adaptation of the argument of \cite[Claim 8.5]{Malliaris:2012aa}.

Recall that if $T$ is a theory with a distinguished predicate $P$ and $\kappa < \lambda$ are infinite cardinals, then the theory $T$ is said to \emph{admit} $(\lambda, \kappa)$ if there is a model $M \models T$ with $|M| = \lambda$ and $|P^{M}| = \kappa$.  The notation $\langle \kappa, \lambda \rangle \to \langle \kappa', \lambda' \rangle$ stands for the assertion that any theory in a countable language that admits $(\lambda,\kappa)$ also admits $(\lambda',\kappa')$.  Chang's two-cardinal theorem asserts that if $\lambda = \lambda^{<\lambda}$ then $\langle \aleph_{0},\aleph_{1} \rangle \to \langle \lambda, \lambda^{+}\rangle$ (see, e.g., \cite[Theorem 7.2.7]{chang1990model}\textemdash the statement given here follows from the proof).    

\begin{fact}\label{square} \cite[Lemma 4]{kennedyshelah}
Suppose $\mathcal{D}$ is a regular uniform ultrafilter on $\lambda$ and $\langle \aleph_{0},\aleph_{1} \rangle \to \langle \lambda, \lambda^{+}\rangle$.  There is an array of sets $\langle u_{t,\alpha} : t < \lambda, \alpha < \lambda^{+} \rangle$ satisfying the following properties:
\begin{enumerate}
\item $u_{t, \alpha} \subseteq \alpha$ 
\item $|u_{t, \alpha}| < \lambda$
\item $\alpha \in u_{t, \beta} \implies u_{t, \beta} \cap \alpha = u_{t, \alpha}$
\item if $u \subseteq \lambda^{+}$, $|u| < \aleph_{0}$ then $\{t < \lambda : (\exists \alpha)(u \subseteq u_{t, \alpha})\} \in \mathcal{D}$.
\end{enumerate}
\end{fact}

\begin{thm} \label{second main theorem part 2}
Suppose $|I| = \lambda$ and $\langle \aleph_{0},\aleph_{1} \rangle \to \langle \lambda, \lambda^{+}\rangle$.  Suppose $\kappa_{\text{sct}}(T) > |I|^{+}$, $M$ is an $|I|^{++}$-saturated model of $T$ and $\mathcal{D}$ is a regular ultrafilter over $I$.  Then $M^{I}/\mathcal{D}$ is not $|I|^{++}$-compact.  
\end{thm}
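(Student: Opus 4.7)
The plan is to adapt the argument of Claim 7.5 in \cite{Malliaris:2012aa} from the TP$_1$ setting to the sct setting. Since $\kappa_{\text{sct}}(T) > \lambda^+$, Fact \ref{witness}(2) provides an sct-pattern of height $\lambda^+$ witnessed by an $s$-indiscernible tree $(a_\eta)_{\eta \in \omega^{<\lambda^+}}$ and formulas $(\varphi_\alpha(x;y_\alpha) : \alpha < \lambda^+)$, and the $\lambda^{++}$-saturation of $M$ lets me assume this whole pattern lives inside $M$. Identifying $I$ with $\lambda$ and appealing to Fact \ref{square} (available because $\lambda = \lambda^{<\lambda}$), I fix the coherent family $\langle u_{t,\alpha} : t < \lambda, \alpha < \lambda^+\rangle$.

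The central step is to produce a family of nodes $\eta_{t,\alpha} \in \omega^\alpha$, defined by transfinite recursion on $\alpha$, with the property that for all $\gamma < \alpha$,
$$
\eta_{t,\gamma} \unlhd \eta_{t,\alpha} \iff \gamma \in u_{t,\alpha}.
$$
The coherence condition (3) of Fact \ref{square} ensures that the already-defined $\eta_{t,\gamma}$ for $\gamma \in u_{t,\alpha}$ form a compatible chain whose union is a well-defined function on $\sup u_{t,\alpha}$; I extend this to an element $\eta_{t,\alpha} \in \omega^\alpha$, exploiting the $\omega$-branching at the remaining levels to diagonalize against $\eta_{t,\gamma}$ for $\gamma \in \alpha \setminus u_{t,\alpha}$ and so force the ``only if'' direction. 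I then set $b_\alpha = \langle a_{\eta_{t,\alpha}}\rangle_{t<\lambda}/\mathcal{D}$ and consider the type $p(x) = \{\varphi_\alpha(x; b_\alpha) : \alpha < \lambda^+\}$ over a set of size $\lambda^+$, which I claim is finitely satisfiable in $M^\lambda/\mathcal{D}$ but not realized.

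Finite satisfiability is immediate from Fact \ref{square}(4): for any finite $u \subset \lambda^+$, for $\mathcal{D}$-almost every $t$ there is some $\alpha$ with $u \subseteq u_{t,\alpha}$, so $\{\eta_{t,\beta} : \beta \in u\}$ is a chain of initial segments of $\eta_{t,\alpha}$, and the sct path-consistency yields a common realization in $M$ of $\{\varphi_\beta(x; a_{\eta_{t,\beta}}) : \beta \in u\}$, which lifts to the ultrapower by \L os's theorem. For the non-realization step, assume $c \in M^\lambda/\mathcal{D}$ realizes $p$; let $J_\alpha = \{t : M \models \varphi_\alpha(c[t]; a_{\eta_{t,\alpha}})\} \in \mathcal{D}$ and $A_t = \{\alpha : t \in J_\alpha\}$. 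The sct inconsistency condition forces $\{\eta_{t,\alpha} : \alpha \in A_t\}$ to be a chain, so by the defining property of the $\eta_{t,\alpha}$, $A_t \cap \beta \subseteq u_{t,\beta}$ for every $\beta \in A_t$; since $|u_{t,\beta}| < \lambda$, every proper initial segment of $A_t$ has cardinality $<\lambda$, whence $|A_t| \leq \lambda$. Double-counting the pairs $(\alpha, t)$ with $t \in J_\alpha$, and using uniformity of $\mathcal{D}$, then gives
$$
\lambda^+ = \sum_{\alpha < \lambda^+} |J_\alpha| = \sum_{t < \lambda} |A_t| \leq \lambda \cdot \lambda = \lambda,
$$
the desired contradiction. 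The main technical obstacle is the recursive construction of the $\eta_{t,\alpha}$: at limit stages where $u_{t,\alpha}$ is cofinal in $\alpha$ the node is almost completely forced, and engineering the required incomparability at the (possibly few) free levels demands a careful priority argument exploiting that there are at most $|\alpha| < \lambda^+$ nodes to diagonalize against while $\omega$ values are available at each level.
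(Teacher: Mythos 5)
Your proposal follows the same overall strategy as the paper, which is itself an adaptation of Malliaris--Shelah Claim 7.5: use Fact \ref{square} to get the coherent family $\langle u_{t,\alpha}\rangle$, recursively build nodes $\eta_{t,\alpha}$ in the tree so that the chain/antichain structure of $\{\eta_{t,\alpha}\}_\alpha$ at each coordinate $t$ mirrors the $u_{t,\alpha}$'s, form the ultraproduct parameters $c_\alpha$, and show $p(x)=\{\varphi_\alpha(x;c_\alpha)\}$ is consistent but omitted. The one genuinely different ingredient is the final contradiction: the paper applies Fodor's lemma to the regressive map $\alpha\mapsto t_\alpha$ to find a single coordinate $t_*$ on which $\{\alpha : t_* \in J_\alpha\}$ is stationary, then reads off $|u_{t_*,\alpha}|\geq\lambda$ for a suitable $\alpha$; you instead bound $|A_t|\leq\lambda$ uniformly in $t$ (via the same observation that $A_t\cap\beta\subseteq u_{t,\beta}$) and double-count the pairs $(\alpha,t)$. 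These are dual ways of exploiting the same inequality $|u_{t,\beta}|<\lambda$, and your double-count is a perfectly valid and arguably more elementary alternative.

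There is, however, a concrete concern with the recursive construction of the $\eta_{t,\alpha}$, which you yourself flag as ``the main technical obstacle''. You keep the tree $\omega$-branching (matching the literal definition of an sct-pattern), while the paper silently passes to a tree indexed by $\lambda^{<\lambda^+}$ --- an innocuous move given $s$-indiscernibility and compactness. The requirement $\eta_{t,\gamma}\unlhd\eta_{t,\alpha}\iff\gamma\in u_{t,\alpha}$ forces you, at stage $\alpha$, to arrange $\eta_{t,\alpha}\upharpoonright\gamma\neq\eta_{t,\gamma}$ for every $\gamma<\alpha$ outside $u_{t,\alpha}$, and the only levels at your disposal are those above $\sup u_{t,\alpha}$. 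With $\omega$ many branches per level and up to $|\alpha|\leq\lambda$ ordinals $\gamma$ to diagonalize against, a naive greedy priority scheme (kill the least surviving $\gamma$ at each fresh level) fails at limit ordinals: the surviving $\gamma_\xi$'s are strictly increasing but their supremum can equal a limit $\gamma$ that is never itself killed. You wave at ``a careful priority argument'' without supplying it, and this is not a gap that dissolves under inspection. The cleanest repair is the one the paper implicitly makes: replace the $\omega$-branching tree by a $\lambda$-branching (or, better, $\lambda^+$-branching) tree --- available by stretching the $s$-indiscernible tree via compactness --- so that at the first free level there are strictly more values available than ordinals to avoid, and a single choice at that level kills everything above $\sup u_{t,\alpha}$ at once. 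You should either make that passage explicit or supply the missing priority argument; as written, the construction of the $\eta_{t,\alpha}$ is not established.
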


\begin{proof}
Let $(\varphi_{\alpha}(x;y_{\alpha}) : \alpha < \lambda^{+})$, $(a_{\eta})_{\eta \in \lambda^{<\lambda^{+}}}$ be an sct-pattern.  We may assume $l(y_{\alpha}) = k$ for all $\alpha < \lambda^{+}$.  Let $\langle u_{t,\alpha} : t < \lambda, \alpha < \lambda^{+} \rangle$ be given as by Fact \ref{square}.  We may consider the tree $(\lambda^{+})^{<\lambda}$ as the set of sequences of elements of $\lambda^{+}$ of length $<\lambda$ ordered by extension and then, for each $t < \lambda$ and $\alpha < \lambda^{+}$, we can define $\eta_{t,\alpha} \in (\lambda^{+})^{<\lambda}$ to be the sequence that enumerates $u_{t,\alpha} \cup \{\alpha\}$ in increasing order.  Note that if $\alpha < \beta$, then, because $\alpha \in u_{t,\beta}$ implies $u_{t,\beta} \cap \alpha = u_{t,\alpha}$, we have $\eta_{t,\alpha} \vartriangleleft \eta_{t,\beta} \iff \alpha \in u_{t,\beta}$.  

For each $\alpha < \lambda^{+}$ we thus have an element $c_{\alpha} \in M^{\lambda}/\mathcal{D}$ given by $c_{\alpha} = \langle c_{\alpha}[t]: t < \lambda \rangle /\mathcal{D}$ where $c_{\alpha}[t] = a_{\eta_{t, \alpha}} \in M$.  

\textbf{Claim:} $p(x):= \{\varphi_{\alpha}(x;c_{\alpha}) : \alpha < \lambda^{+} \}$ is consistent.  

\emph{Proof of claim:} Fix any finite $u \subseteq \lambda^{+}$.  If for some $t < \lambda$ and $\alpha < \lambda^{+}$, we have $u \subseteq u_{t,\alpha}$ then $\{\eta_{t,\beta} : \beta \in u\} \subseteq \{\eta_{t,\beta} : \beta \in u_{t,\alpha}\}$ which is contained in a path, hence $\{\varphi_{\beta}(x;c_{\beta}[t]) : \beta \in u\} = \{\varphi_{\beta}(x;a_{\eta_{t,\beta}}) : \beta \in u\}$ is consistent by definition of an sct-pattern.  We know $\{t < \lambda : (\exists \alpha)(u \subseteq u_{t,\alpha})\} \in \mathcal{D}$ so the claim follows by $\L$o\'{s}'s theorem and compactness.\qed  

Suppose $b = \langle b[t] \rangle_{t \in \lambda} / \mathcal{D}$ is a realization of $p$ in $M^{\lambda}/\mathcal{D}$.  For each $\alpha < \lambda^{+}$ define $J_{\alpha} = \{t < \lambda : M \models \varphi_{\alpha}(b[t],c_{\alpha}[t])\} \in \mathcal{D}$.  For each $\alpha$, pick $t_{\alpha} \in J_{\alpha}$.  The map $\alpha \mapsto t_{\alpha}$ is regressive on the stationary set of $\alpha$ with $\lambda \leq \alpha < \lambda^{+}$.  By Fodor's lemma, there's some $t_{*}$ such that the set $S = \{\alpha < \lambda^{+} : t_{\alpha} = t_{*}\}$ is stationary.  Therefore $p_{*}(x) = \{\varphi_{\alpha}(x;a_{\eta_{t_{*},\alpha}}) : \alpha \in S\}$ is a consistent partial type in $M$ so $\{\eta_{t_{*},\alpha} : \alpha \in S\}$ is contained in a path, by definition of sct-pattern.  Choose an $\alpha \in S$ so that $|S \cap \alpha| = \lambda$.  Then, by choice of the $\eta_{t,\alpha}$, we have $\beta \in S \cap \alpha$ implies $\eta_{t_{*},\beta} \unlhd \eta_{t_{*},\alpha}$ and therefore $\beta \in u_{t_{*},\alpha}$.  This shows $|u_{t_{*},\alpha}| \geq \lambda$, a contradiction.  
\end{proof}

\bibliographystyle{alpha}
\bibliography{ms.bib}{}

\begin{thebibliography}{KPT05}

\bibitem[Che14]{ChernikovNTP2}
Artem Chernikov.
\newblock Theories without the tree property of the second kind.
\newblock {\em Ann. Pure Appl. Logic}, 165(2):695--723, 2014.

\bibitem[CK90]{chang1990model}
Chen~Chung Chang and H~Jerome Keisler.
\newblock {\em Model theory}, volume~73.
\newblock Elsevier, 1990.

\bibitem[CR16]{ArtemNick}
Artem Chernikov and Nicholas Ramsey.
\newblock On model-theoretic tree properties.
\newblock {\em Journal of Mathematical Logic}, 16(02):1650009, 2016.

\bibitem[Gal80]{Galvin80}
Fred Galvin.
\newblock Chain conditions and products.
\newblock {\em Fund. Math.}, 108(1):33--48, 1980.

\bibitem[Hod93]{hodges1993model}
Wilfrid Hodges.
\newblock {\em Model theory}, volume~42.
\newblock Cambridge University Press Cambridge, 1993.

\bibitem[KKS14]{KimKimScow}
Byunghan Kim, Hyeung-Joon Kim, and Lynn Scow.
\newblock Tree indiscernibilities, revisited.
\newblock {\em Arch. Math. Logic}, 53(1-2):211--232, 2014.

\bibitem[KPT05]{KPT}
A.~S. Kechris, V.~G. Pestov, and S.~Todorcevic.
\newblock Fra\"\i ss{\'e} limits, {R}amsey theory, and topological dynamics of
  automorphism groups.
\newblock {\em Geom. Funct. Anal.}, 15(1):106--189, 2005.

\bibitem[KS02]{kennedyshelah}
Juliette Kennedy and Saharon Shelah.
\newblock On regular reduced products.
\newblock {\em J. Symbolic Logic}, 67(3):1169--1177, 2002.

\bibitem[Kun14]{kunen2014set}
Kenneth Kunen.
\newblock {\em Set theory an introduction to independence proofs}, volume 102.
\newblock Elsevier, 2014.

\bibitem[Med15]{Medvedev:2015aa}
Alice Medvedev.
\newblock $\mathbb{Q}\mathrm{ACFA}$.
\newblock {\em arXiv preprint arXiv:1508.06007}, 2015.

\bibitem[MS15]{Malliaris:2012aa}
M.~Malliaris and S.~Shelah.
\newblock Constructing regular ultrafilters from a model-theoretic point of
  view.
\newblock {\em Trans. Amer. Math. Soc.}, 367(11):8139--8173, 2015.

\bibitem[She90]{shelah1990classification}
Saharon Shelah.
\newblock {\em Classification theory: and the number of non-isomorphic models}.
\newblock Elsevier, 1990.

\bibitem[She94]{Sh:g}
Saharon Shelah.
\newblock {\em {Cardinal Arithmetic}}, volume~29 of {\em Oxford Logic Guides}.
\newblock Oxford University Press, 1994.

\bibitem[She97]{Sh:572}
Saharon Shelah.
\newblock {Colouring and non-productivity of $\aleph_2$-cc}.
\newblock {\em Annals of Pure and Applied Logic}, 84:153--174, 1997.
\newblock arxiv:math.LO/9609218.

\end{thebibliography}

\end{document}